\newcommand{\subjclass}[2][1991]{%
   \let\@oldtitle\@title%
   \gdef\@title{\@oldtitle\footnotetext{#1 \emph{Mathematics subject classification.} #2}}%
 }
\theoremstyle{plain}
\newtheorem{teo}{Theorem}[section]
\newtheorem{defn}[teo]{Definition}
\newtheorem{lema}[teo]{Lemma}
\newtheorem{cor}[teo]{Corollary}
\newtheorem{prop}[teo]{Proposition}
\newtheorem{obs}[teo]{Remark}
\DeclareMathOperator{\inv}{Inv}
\DeclareMathOperator{\diam}{diam}
\DeclareMathOperator{\supp}{supp}
\DeclareMathOperator{\re}{Re}
\title{The quaternion over the ring of Colombeau's full generalized
numbers}
\author{Wagner Cortes \and A.~R.~G.~Garcia \and S.~H.~da Silva}
\newcommand{\Addresses}{{% additional braces for segregating \footnotesize
  \bigskip
  \footnotesize

  Wagner Cortes, \textsc{Instituto de Matemática,
Universidade Federal do Rio Grande do Sul,
Porto Alegre-RS, Brazil,
Av. Bento Gonçalves, 9500,
91509-900}\par\nopagebreak
  \textit{E-mail address}, Wagner Cortes: \texttt{wocortes@gmail.com}

  \medskip

  A.~R.~G.~Garcia (Corresponding author), \textsc{Centro de Ciências Exatas e Naturais,
Universidade Federal Rural do Semi-Árido,
Mossoró-RN, Brazil,
Av. Francisco Mota, 572, 
59.625-900.}\par\nopagebreak
  \textit{E-mail address}, A.~R.~G.~Garcia: \texttt{ronaldogarcia@ufersa.edu.br}

  \medskip

  S.~H.~da Silva, \textsc{Unidade Acadêmica de Matemática,
Universidade Federal de Campina Grande,
Campina Grande-PB, Brazil,
Av. Aprígio Veloso, 785,
58429-970.}\par\nopagebreak
  \textit{E-mail address}, S.~H.~da Silva: \texttt{horacio@mat.ufcg.edu.br}
}}
\begin{document}
\maketitle

\pagestyle{plain}
\pagenumbering{arabic}

\date{}

\begin{abstract}
In this paper, we extend the results obtained by
Cortes-Ferrero-Juriaans (2009) for the
quaternion over the ring Colombeau's simplified generalized numbers,
denoted by $\overline{\mathbb{H}}_s$, to the quaternion over 
the ring of Colombeau's full generalized numbers, denoted by 
$\overline{\mathbb{H}}$. In this paper, we introduce and investigate the
topological algebra of the quaternion over the ring of Colombeau's 
full generalized numbers. This is an important object to study 
if one wants to build the algebraic theory of Colombeau's full generalized
numbers $\overline{\mathbb{K}}$ studied by Aragona-Garcia-Juriaans (2013). 
We study some ring theoretical properties of $\overline{\mathbb{H}}$,  
we classify the dense ideals of $\overline{\mathbb{K}}$ 
in the algebraic sense, and as a consequence, it has a 
maximal ring of quotients which is Von Neumann regular.

\begin{keywords}
Full quaternion algebra; Colombeau's full generalized numbers; 
Noncommutative rings, and ideals.
\end{keywords}

\begin{subjclass}
~MSC2010 classification: Primary 46F30; Secondary 46F20.
\end{subjclass}
\end{abstract}

\section{Introduction}\label{sec-1}
Since its introduction, the theory of Colombeau generalized function
has undergone rapid growth. Fundamental for this theory were the definitions
of Scarpalézos' sharp topologies and the notion of point value by
Kunzinger-Obbergunggenberger. A global theory was developed in \cite{GKS}.

The study of the algebraic aspects of this theory is relatively
recent. This was proposed by J.~Aragona and M.~Obberguggenberger
and started with a paper by Aragona-Juriaans \cite{JO} that extended
its studies for Colombeau's full generalized numbers in \cite{JRJ}.
This, and the developments mentioned above, due to D.~Scarpalézos \cite{SD},
M.~Kunzinger \cite{MK} and M.~Obbergungenberger \cite{OMM} led
Aragona-Fernadez-Juriaans \cite{OJR}, to propose a differential
calculus which in its turn was used to continue
the algebraic aspects of the theory \cite{JO} and \cite{JJO}. 

In \cite{CFJ} focus was on an algebra that may play important role
in the study of the algebraic theory of these algebras. The authors
introduced Colombeau's generalized quaternion algebras, $\overline{\mathbb{H}}_s$,
and studied its topological and some of its algebraic properties. In this paper,
we shall to extend their studies to Colombeau's full generalized
numbers, i.e., we introduced Colombeau's full generalized quaternion
algebras, $\overline{\mathbb{H}}$ (see Definition \ref{colom-6}), and
study its topological 
and some algebraic properties. We study some ring theoretical properties as:
duo (Theorem \ref{colom-16}), exchange (Theorem \ref{colom-18}),
normal (Theorem \ref{colom-20}), Gelfand (Theorem \ref{colom-22}), and
Bezout property (Theorem \ref{colom-23}). We further
classify the dense ideals in the algebraic sense of $\overline{\mathbb{K}}$ and
prove that $\overline{\mathbb{K}}$ and $\overline{\mathbb{H}}$
have a maximal ring of quotients that is Von Newman regular.

In Section \ref{sec-2}, we collect basic definitions, results and
notations to be used throughout the paper and as a rule, whose
proofs were omitted. In Section \ref{sec-3}, we introduced the
topological algebra of the quaternion full generalized numbers, where
we present some results that are extensions of results obtained by
Cortes-Ferrero-Juriaans in \cite{CFJ} and Aragona-Garcia-Juriaans in
\cite{JRJ}. For example, Proposition \ref{colom-13} in Section
\ref{sec-3} extends Proposition \ref{colom-5} that appears 
in Section \ref{sec-2} for the quaternion full generalized numbers. Finally, in
Section \ref{sec-4}, we study some interesting algebraic structure of quaternion
full generalized numbers. Indeed, we study some ring theoretical
properties such as: duo (Theorem \ref{colom-16}), exchange (Theorem \ref{colom-18}),
normal (Theorem \ref{colom-20}), Gelfand (Theorem \ref{colom-22}), and
Bezout property (Theorem \ref{colom-23}) and  related themes.    

The notation used is mostly standard. Some important references for
the theory of Colombeau's full generalized numbers, functions and
their topologies are in \cite{GKS,AB,JFC,MK,OMM,SD}, and more recently, see 
\cite{JRJ,VH} and \cite{CFJ}. See also the set of Notation 1.1 of
\cite{JRJ}, for example, items $a), i), j), l)$ and $m)$ to understand
some notations which appear throughout this paper.

\section{Colombeau's full generalized numbers: a review}\label{sec-2}
In this section, we recall some algebraic theory of Colombeau's full
generalized numbers. We refer the interested reader to
\cite{JRJ,OJR,JO} and \cite{JJO} for notation, more details and proofs of the
results presented in this section. 

The norm of an element $x\in\overline{\mathbb{K}}$ is defined
by $$\Vert x\Vert=D(x,0),$$ where $D$ is the ultra-metric in
$\overline{\mathbb{K}}$ defined in \cite{JRJ} inspired by
Scarpalezos for the Colombeau's simplifies generalized numbers,
$\overline{\mathbb{K}}_s$. Denote by $\inv(\overline{\mathbb{K}})$ 
the unit group of $\overline{\mathbb{K}}$.

Let $$\mathcal{S}_f:=\{A\subset\mathcal{A}_0(\mathbb{K})|\forall~p\in\mathbb{N},~
\exists ~\varphi\in\mathcal{A}_p(\mathbb{K}),~\mbox{such
  that} ~\{\varepsilon|\varphi_\varepsilon\in A\}\in\mathcal{S}\},$$
where $\mathcal{S}:=\{S\subset
]0,1]|0\in\overline{S}\cap\overline{S^c}\}$. 
Here, the bar denotes topological closure. We  denote
$\mathcal{P}_*(\mathcal{S}_f)$ as
the set of all subsets $\mathcal{F}$ of $\mathcal{S}_f$ which are
stable under finite union and such that if $A\in\mathcal{S}_f$, then
$A$ or $A^c$ belongs to $\mathcal{F}$. Also, $g_f(\mathcal{F})=\langle
\mathcal{X}_A:A\in \mathcal{F}\rangle$  
denotes the ideal generated by the characteristic function of $A$, such that
$A\in\mathcal{F}$. We also need to define the set
$$Z(\hat{x}):=\{\varphi\in\mathcal{A}_0(\mathbb{K})|\hat{x}(\varphi)=0\}$$
of the zeros of a representative $\hat{x}$ of $x\in\overline{\mathbb{K}}$. 

Now, we can enunciate the following three results that appears in
\cite{JRJ} and they will be important for our proposal.

\begin{teo}[Fundamental theorem \cite{JRJ}]\label{colom-1}
\begin{enumerate}
\item[$i)$] $x\in
  \inv(\overline{\mathbb{K}})\Leftrightarrow
  Z(\hat{x})\notin \mathcal{S}_f$;
\item[$ii)$] $x\notin \inv(\overline{\mathbb{K}})$ iff there
  exists an idempotent $e\in\overline{\mathbb{K}}$, such that
  $xe=0$. In particular, if
  $x\in\overline{\mathbb{K}}\setminus\{0\}$ and
  $x\notin \inv(\overline{\mathbb{K}})$, then $x$ is a zero
  divisor. Moreover, $\inv(\overline{\mathbb{K}})$ is open and
  dense in $\overline{\mathbb{K}}$.
\end{enumerate}
\end{teo}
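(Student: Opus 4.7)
The plan is to prove (i) first, then derive (ii) from it, and finally establish the openness and density statements. The three items are tightly linked: invertibility of $x$ should be governed entirely by how ``small'' the set $Z(\hat{x})$ is, measured in the $\mathcal{S}_f$-sense, and the idempotent annihilators and topological statements both follow once this zero-set dichotomy is in place.

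For (i), the forward direction is the easier half. If $x \in \inv(\overline{\mathbb{K}})$ with $xy = 1$ and representatives $\hat{x},\hat{y}$, then $\hat{x}\hat{y} - 1$ is negligible. I would argue by contradiction: if $Z(\hat{x}) \in \mathcal{S}_f$, then by the very definition of $\mathcal{S}_f$, for every $p \in \mathbb{N}$ one finds $\varphi \in \mathcal{A}_p(\mathbb{K})$ with $\{\varepsilon : \varphi_\varepsilon \in Z(\hat{x})\} \in \mathcal{S}$. On such an $\varepsilon$-set one has $\hat{x}(\varphi_\varepsilon)\hat{y}(\varphi_\varepsilon) = 0$, so $\hat{x}\hat{y} - 1$ takes the value $-1$ on a set with $0$ as an accumulation point, contradicting negligibility at any moderateness index. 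The converse is the harder half: assuming $Z(\hat{x}) \notin \mathcal{S}_f$, I would build a representative $\hat{y}$ of a candidate inverse by setting $\hat{y}(\varphi) := \hat{x}(\varphi)^{-1}$ whenever $|\hat{x}(\varphi)|$ satisfies a suitable polynomial lower bound in $\varepsilon$, and $\hat{y}(\varphi) := 0$ otherwise. Moderateness of $\hat{y}$ and negligibility of $\hat{x}\hat{y} - 1$ must be checked at every level $p$; the hypothesis $Z(\hat{x}) \notin \mathcal{S}_f$ is exactly what provides the lower bounds needed at each such level.

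For (ii), once (i) is available the argument is short and uses the characteristic-function idempotents $\mathcal{X}_A$. If $x \notin \inv(\overline{\mathbb{K}})$, then by (i), $Z(\hat{x}) \in \mathcal{S}_f$; picking $A \in \mathcal{S}_f$ with $\hat{x}|_A \equiv 0$ and forming $e := [\mathcal{X}_A]$ produces a nonzero idempotent in $\overline{\mathbb{K}}$ with $xe = 0$. Conversely, if $xe = 0$ with $e^2 = e \neq 0$, then invertibility of $x$ would force $e = x^{-1}(xe) = 0$, a contradiction. The in-particular statement about zero divisors is then immediate: any nonzero non-invertible element annihilates the nonzero idempotent just produced.

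For the topological statement, openness of $\inv(\overline{\mathbb{K}})$ follows from a Neumann-series argument adapted to the ultra-metric $D$: if $x \in \inv(\overline{\mathbb{K}})$ and $\Vert y - x \Vert$ is sufficiently small compared to $\Vert x^{-1}\Vert^{-1}$, then $1 - x^{-1}(x-y)$ is invertible, hence so is $y = x(1 - x^{-1}(x-y))$. Density is the more delicate point: given any $x$ and any sharp neighborhood, I would perturb $x$ by a small generalized scalar $r$ (for instance a power of $\varepsilon$ times a unit) chosen so that $Z(\widehat{x+r}) \notin \mathcal{S}_f$, whence $x+r \in \inv(\overline{\mathbb{K}})$ by (i). I expect the main technical obstacle throughout to be the converse direction of (i), because both the definition of $\mathcal{S}_f$ and that of moderateness quantify over all orders $p$ and all test objects $\varphi \in \mathcal{A}_p(\mathbb{K})$, so the pointwise inversion must be made uniform at every level of the hierarchy.
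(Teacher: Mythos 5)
The paper does not prove this result: it is quoted verbatim as the ``Fundamental theorem'' from Aragona--Garcia--Juriaans \cite{JRJ}, and Section~\ref{sec-2} explicitly states that proofs of the results collected there are omitted. So there is no in-paper proof to compare against, and your attempt has to be judged on its own merits against the cited source.

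Your overall architecture is reasonable, and several pieces are sound: the forward implication of $(i)$ (if $Z(\hat{x})\in\mathcal{S}_f$ then $\hat{x}\hat{y}-1$ equals $-1$ on $\varepsilon$-sets accumulating at $0$ at every level, which kills negligibility once $\gamma(q)>p$), the derivation of $(ii)$ from $(i)$ via $e=[\mathcal{X}_{Z(\hat{x})}]$ together with Theorem~4.14 of \cite{JRJ} which guarantees $e$ is a nontrivial idempotent, and the Neumann-series argument for openness in a complete ultrametric algebra. The genuine gap is exactly where you yourself flag the obstacle: the converse of $(i)$. The hypothesis $Z(\hat{x})\notin\mathcal{S}_f$ constrains only the set where the chosen representative vanishes \emph{exactly}; it gives no quantitative lower bound on $|\hat{x}(\varphi_\varepsilon)|$ where it is nonzero. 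Your candidate inverse ``$\hat{y}(\varphi)=\hat{x}(\varphi)^{-1}$ where a polynomial lower bound holds, else $0$'' requires precisely such lower bounds to be moderate and to make $\hat{x}\hat{y}-1$ null, and the sentence ``the hypothesis $Z(\hat{x})\notin\mathcal{S}_f$ is exactly what provides the lower bounds'' is an assertion, not an argument. Concretely, a representative can be nowhere zero yet decay faster than every power of $\varepsilon$ along all scalings, so the passage from ``zero set not in $\mathcal{S}_f$'' to ``sharp lower bound as in Theorem~\ref{colom-2}'' is the heart of the matter and must involve a careful choice or modification of the representative (and, relatedly, an argument that the $\mathcal{S}_f$-status of $Z(\hat{x})$ is a well-defined invariant of the class $x$, which your sketch also leaves implicit). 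Until that bridge is built, the converse of $(i)$ -- and with it the density argument, which you also route through $(i)$ -- is not established.
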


\begin{teo}\label{colom-2}
An element $x\in\overline{\mathbb{K}}$ is a unit if and only if
there exists $r>0$ and a map $\tau:\mathcal{A}_0(\mathbb{K})\to
]0,1]$, such that $$|\hat{x}(\varphi_\varepsilon)|\ge\dot{\alpha}_r(\varphi_\varepsilon),~
\forall~0<\varepsilon<\tau(\varphi)=\eta,$$
where $\hat{x}$ is a representative of $x$.
\end{teo}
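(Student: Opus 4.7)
The plan is to prove the two implications separately, using Theorem \ref{colom-1} together with the standard moderate/negligible calculus on representatives.

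For the sufficiency direction ($\Leftarrow$), I assume the lower bound $|\hat{x}(\varphi_\varepsilon)|\ge\dot{\alpha}_r(\varphi_\varepsilon)$ holds for $0<\varepsilon<\tau(\varphi)$ and construct an inverse directly. I would define a candidate representative $\hat{y}$ by setting
\[
\hat{y}(\varphi_\varepsilon)=\begin{cases} \hat{x}(\varphi_\varepsilon)^{-1}, & 0<\varepsilon<\tau(\varphi),\\ 0, & \text{otherwise},\end{cases}
\]
and verify that $\hat{y}$ is moderate, using that $|\hat{y}(\varphi_\varepsilon)|\le \dot{\alpha}_r(\varphi_\varepsilon)^{-1}=\dot{\alpha}_{-r}(\varphi_\varepsilon)$ on the relevant interval, which is the typical polynomial growth bound needed for moderateness in the full algebra. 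Then $\hat{x}\hat{y}-1$ vanishes for $0<\varepsilon<\tau(\varphi)$, which is enough to land in $\mathcal{N}(\mathbb{K})$, so $xy=1$ and $x\in\inv(\overline{\mathbb{K}})$.

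For the necessity direction ($\Rightarrow$), I use Theorem \ref{colom-1}(i) together with the inverse. If $x\in\inv(\overline{\mathbb{K}})$, let $y=x^{-1}$ with representative $\hat{y}$. By moderateness of $\hat{y}$, there exist $r>0$ and a map $\tau_1:\mathcal{A}_0(\mathbb{K})\to ]0,1]$ with $|\hat{y}(\varphi_\varepsilon)|\le\dot{\alpha}_{-r}(\varphi_\varepsilon)$ for $0<\varepsilon<\tau_1(\varphi)$. Since $\hat{x}\hat{y}-1\in\mathcal{N}(\mathbb{K})$, there is a further refinement $\tau_2(\varphi)$ for which $|\hat{x}(\varphi_\varepsilon)\hat{y}(\varphi_\varepsilon)-1|\le 1/2$, and hence $|\hat{x}(\varphi_\varepsilon)\hat{y}(\varphi_\varepsilon)|\ge 1/2$. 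Setting $\tau(\varphi)=\min\{\tau_1(\varphi),\tau_2(\varphi)\}$ and combining gives
\[
|\hat{x}(\varphi_\varepsilon)|\ge\tfrac{1}{2}|\hat{y}(\varphi_\varepsilon)|^{-1}\ge\tfrac{1}{2}\dot{\alpha}_r(\varphi_\varepsilon),\qquad 0<\varepsilon<\tau(\varphi),
\]
which after absorbing the constant $1/2$ into a slightly larger $r'$ (using that $\dot\alpha_{r'}\le \tfrac{1}{2}\dot\alpha_r$ for $r'$ appropriately chosen and $\varepsilon$ small enough) yields the required bound.

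The main obstacle I expect is bookkeeping the $\varphi$-dependent thresholds: in the full Colombeau setting the map $\tau$ is not a constant but a function of the test object $\varphi\in\mathcal{A}_0(\mathbb{K})$, so one must be careful when combining two such thresholds (from moderateness of $\hat{y}$ and from negligibility of $\hat{x}\hat{y}-1$) and when the estimate must be uniform in $\varphi$ only through the prescribed scale $\dot\alpha_r$. The other delicate point is verifying that the piecewise definition of $\hat{y}$ in the sufficiency part produces a well-defined element of the algebra $\mathcal{A}_0(\mathbb{K})\to\mathbb{K}$ with the required moderate growth; this is where the index-set structure $\mathcal{S}_f$ and the choice of $\tau$ interact, and it is essentially here that Theorem \ref{colom-1}(i) is implicitly being used, because the non-invertibility of $x$ would correspond to $\{\varepsilon\mid \hat{x}(\varphi_\varepsilon)=0\}\in\mathcal{S}$ for suitable $\varphi$.
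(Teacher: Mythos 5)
The paper itself does not prove Theorem \ref{colom-2}; it is quoted as a known result from \cite{JRJ}, so there is no in-paper argument to compare against. Your two-sided plan (construct a representative of the inverse for sufficiency; combine moderateness of $\hat{x}^{-1}$ with negligibility of $\hat{x}\hat{x}^{-1}-1$ for necessity) is the standard route and is essentially sound.

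There is, however, one concrete repair to make in the sufficiency direction, and it is exactly the point you flagged at the end without resolving: the prescription $\hat{y}(\varphi_\varepsilon)=\hat{x}(\varphi_\varepsilon)^{-1}$ for $0<\varepsilon<\tau(\varphi)$ and $\hat{y}=0$ otherwise does not define a function on $\mathcal{A}_0(\mathbb{K})$, because the parametrization $(\varphi,\varepsilon)\mapsto\varphi_\varepsilon$ is neither injective nor surjective onto $\mathcal{A}_0(\mathbb{K})$. The fix is to set $\hat{y}(\psi)=\hat{x}(\psi)^{-1}$ whenever $\hat{x}(\psi)\ne 0$ and $\hat{y}(\psi)=0$ otherwise. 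The hypothesis then guarantees that for $\varphi\in\mathcal{A}_p(\mathbb{K})$ (with $p\ge\lceil r\rceil$) and $0<\varepsilon<\tau(\varphi)$ one has $\hat{x}(\varphi_\varepsilon)\ne 0$ and $|\hat{y}(\varphi_\varepsilon)|\le\dot{\alpha}_{-r}(\varphi_\varepsilon)=\varepsilon^{-r}(i(\varphi))^{-r}$, which is a legitimate moderateness bound with $C_\varphi=(i(\varphi))^{-r}$; moreover $\hat{x}\hat{y}-1$ vanishes for $\varepsilon<\tau(\varphi)$, so it is null and $xy=1$. In the necessity direction the argument is correct, but you should make explicit how $|\hat{y}(\varphi_\varepsilon)|\le C_\varphi\varepsilon^{-p}$ is upgraded to $|\hat{y}(\varphi_\varepsilon)|\le\dot{\alpha}_{-r}(\varphi_\varepsilon)$ for a $\varphi$-independent $r$: choose any $r>p$ and shrink $\eta_\varphi$ so that $C_\varphi\,(i(\varphi))^{r}\le\varepsilon^{-(r-p)}$; absorbing the factor $1/2$ works the same way. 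Finally, the appeal to Theorem \ref{colom-1}(i) is not needed for sufficiency, since your direct construction already produces the inverse; that remark can be dropped.
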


\begin{teo}\label{colom-3}
Let $\mathfrak{p}\lhd\overline{\mathbb{K}}$ be a prime ideal. Then:
\begin{enumerate}
\item[$a)$] There exists
  $\mathcal{F}_{\mathfrak{p}}\in\mathcal{P}_*(\mathcal{S}_f)$, such
  that $g_f(\mathcal{F}_{\mathfrak{p}})\subset\mathfrak{p}$.
\item[$b)$] $\{\overline{g_f(\mathcal{F})}|\mathcal{F}\in\mathcal{P}_*(\mathcal{S}_f)\}$
  is the set of all maximal ideals of $\overline{\mathbb{K}}$. 
\end{enumerate}
\end{teo}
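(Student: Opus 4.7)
The plan is first to attach to each prime ideal $\mathfrak{p}$ an ideal-invariant family $\mathcal{F}_{\mathfrak{p}}$ built directly from the characteristic-function idempotents lying in $\mathfrak{p}$, and then to derive part $b)$ from part $a)$ combined with the Fundamental Theorem \ref{colom-1}, which controls when an element of $\overline{\mathbb{K}}$ is invertible.

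For part $a)$, I would set
$$\mathcal{F}_{\mathfrak{p}} := \{A \in \mathcal{S}_f : \mathcal{X}_A \in \mathfrak{p}\}$$
and check the two defining conditions of $\mathcal{P}_*(\mathcal{S}_f)$. Stability under finite union follows from the idempotent identity $\mathcal{X}_{A\cup B}=\mathcal{X}_A+\mathcal{X}_B-\mathcal{X}_A\mathcal{X}_B$, together with the fact that $\mathcal{S}_f$ is itself stable under finite union. The dichotomy ``$A\in\mathcal{F}_{\mathfrak{p}}$ or $A^c\in\mathcal{F}_{\mathfrak{p}}$'' follows from primality of $\mathfrak{p}$ applied to the orthogonality $\mathcal{X}_A\cdot \mathcal{X}_{A^c}=0\in\mathfrak{p}$, after noting that $A\in\mathcal{S}_f$ forces $A^c\in\mathcal{S}_f$ thanks to the symmetric condition $0\in\overline{S}\cap\overline{S^c}$ built into $\mathcal{S}$. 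The inclusion $g_f(\mathcal{F}_{\mathfrak{p}})\subset\mathfrak{p}$ is then automatic since $\mathfrak{p}$ is an ideal.

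For part $b)$, one direction is essentially a corollary of $a)$: given a maximal ideal $\mathfrak{m}$, apply $a)$ to obtain $g_f(\mathcal{F}_{\mathfrak{m}})\subset\mathfrak{m}$; since $\inv(\overline{\mathbb{K}})$ is open by Theorem \ref{colom-1} $ii)$, every proper ideal has proper closure, so $\overline{g_f(\mathcal{F}_{\mathfrak{m}})}\subset\mathfrak{m}$, and once $\overline{g_f(\mathcal{F}_{\mathfrak{m}})}$ is known to be maximal, equality is forced. The substantive direction is to show that for every $\mathcal{F}\in\mathcal{P}_*(\mathcal{S}_f)$, the closed ideal $\overline{g_f(\mathcal{F})}$ is both proper and maximal. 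Properness follows because a finite combination $\sum c_i\mathcal{X}_{A_i}$ has zero set containing $\bigcap_i A_i^c$, which via the closure under finite union in $\mathcal{F}$ sits in $\mathcal{S}_f$, so by Theorem \ref{colom-1} $i)$ no such combination is a unit and a sharp-topology argument prevents $1$ from being approximated. Maximality is then obtained by picking $x\notin \overline{g_f(\mathcal{F})}$, building $A\in\mathcal{S}_f$ as the set of $\varphi_\varepsilon$ on which $\hat{x}$ fails the lower bound of Theorem \ref{colom-2}, invoking the dichotomy to place $A^c$ in $\mathcal{F}$, and combining $x$ with the idempotent $\mathcal{X}_A$ to produce an element of $\overline{g_f(\mathcal{F})}+(x)$ whose zero set lies outside $\mathcal{S}_f$, hence a unit by Theorem \ref{colom-1} $i)$.

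The main obstacle I anticipate is the maximality step: extracting an actual unit from $x+g_f(\mathcal{F})$ requires careful representative-level bookkeeping, choosing $\hat{x}$ and moderate indices $\varphi\in\mathcal{A}_p$ so that the bound $|\hat{x}(\varphi_\varepsilon)|\ge\dot{\alpha}_r(\varphi_\varepsilon)$ of Theorem \ref{colom-2} holds precisely on the complement of a set in $\mathcal{F}$. The delicate point is that ``$x\notin\overline{g_f(\mathcal{F})}$'' is a topological statement in the sharp ultrametric $D$, whereas the dichotomy in $\mathcal{P}_*(\mathcal{S}_f)$ is purely set-theoretic on $\mathcal{S}_f$; reconciling the two by translating sharp-distance information into membership of explicit zero sets in $\mathcal{S}_f$ is where the technical work concentrates.
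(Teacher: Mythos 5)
The paper does not prove this theorem: it is one of the three ``results that appears in \cite{JRJ}'' which the authors quote and use as black boxes, so there is no in-paper argument against which to compare your proposal. Judged on its own terms, your approach is the natural one and the broad structure is sound, but there are two concrete gaps worth flagging.

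In part $a)$, you justify that $\mathcal{F}_{\mathfrak p}=\{A\in\mathcal{S}_f:\mathcal{X}_A\in\mathfrak p\}$ is closed under finite union by asserting ``$\mathcal{S}_f$ is itself stable under finite union.'' That claim is false: take any $A\in\mathcal{S}_f$; then $A^c\in\mathcal{S}_f$, but $A\cup A^c=\mathcal{A}_0(\mathbb{K})$, and $\{\varepsilon:\varphi_\varepsilon\in\mathcal{A}_0(\mathbb{K})\}=]0,1]$ which is not in $\mathcal{S}$ (its complement in $]0,1]$ is empty, so $0\notin\overline{S^c}$). The fix is cheap but must be stated: if $A,B\in\mathcal{F}_{\mathfrak p}$ then $\mathcal{X}_{A\cup B}=\mathcal{X}_A+\mathcal{X}_B-\mathcal{X}_A\mathcal{X}_B\in\mathfrak p$, and if $A\cup B\notin\mathcal{S}_f$ then $Z(\mathcal{X}_{A\cup B})=(A\cup B)^c\notin\mathcal{S}_f$ as well, so Theorem~\ref{colom-1}~$i)$ makes $\mathcal{X}_{A\cup B}$ a unit inside $\mathfrak p$, a contradiction; hence $A\cup B\in\mathcal{S}_f$, and so $A\cup B\in\mathcal{F}_{\mathfrak p}$.

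In part $b)$, two issues. The properness argument reasons from ``$Z(\sum c_i\mathcal{X}_{A_i})$ \emph{contains} $(\bigcup A_i)^c\in\mathcal{S}_f$'' to ``not a unit,'' but Theorem~\ref{colom-1}~$i)$ requires the zero set itself to lie in $\mathcal{S}_f$, and $\mathcal{S}_f$ is not closed upward under inclusion (the same example as above shows this). The correct step is that with $A=\bigcup A_i\in\mathcal{F}$, every element of $g_f(\mathcal{F})$ is annihilated by $\mathcal{X}_{A^c}\neq 0$ and is therefore a zero divisor, hence no unit lies in $g_f(\mathcal{F})$ (and then, since $\inv(\overline{\mathbb{K}})$ is open, none lies in its closure either). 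The more serious gap is the maximality of $\overline{g_f(\mathcal{F})}$, which you only sketch and explicitly defer. As written the roles of $A$ and $A^c$ are also muddled: you want to show that $A\in\mathcal{F}$ (the set where a representative of $x$ is ``too small''), because then $x+\dot\alpha_r\,\mathcal{X}_A$, or a similar correction inside $g_f(\mathcal{F})$, is bounded below everywhere and is a unit by Theorem~\ref{colom-2}; if instead $A^c\in\mathcal{F}$ you need to derive from $x\notin\overline{g_f(\mathcal{F})}$ a contradiction, and this is exactly the sharp-topology-to-$\mathcal{S}_f$ translation you identify as ``the delicate point.'' Until that step is carried out at the level of representatives, the proof of $b)$ is incomplete.
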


In \cite{JRJ} it is proved that $g_f(\mathcal{F}_{\mathfrak{p}})$ is
indeed a minimal prime ideal of $\overline{\mathbb{K}}$. In general
$g_f(\mathcal{F}_{\mathfrak{p}})$ is not closed and so
$\overline{\mathbb{K}}$ is not Von Neumann regular.

If $\mathfrak{I}\lhd\overline{\mathbb{K}}$ is a maximal ideal, then
$\overline{\mathbb{K}}$ is algebraically closed in
$\overline{\mathbb{K}}/\mathfrak{I}$ and  it follows that
$\mathcal{B}(\overline{\mathbb{K}})$, the set of idempotents of
$\overline{\mathbb{K}}$, does not depend on $\mathbb{K}$, i.e., 
$\mathcal{B}(\overline{\mathbb{C}})=\mathcal{B}(\overline{\mathbb{R}})$. Moreover,
in (\cite{JRJ},Theorem 4.14) it is proved that
$$\mathcal{B}(\overline{\mathbb{C}})=\mathcal{B}(\overline{\mathbb{R}})
=\{\mathcal{X}_A|A\in\mathcal{S}_f\},$$
where $\mathcal{X}_A$ denotes the characteristic function of the set
$A$, i.e, 
$$\mathcal{X}_A(\varphi)=\left\{\begin{array}{ll}
1, &\mbox{if}~\varphi\in A\\
0, &\mbox{if}~\varphi\in A^c.
\end{array}\right.
$$

For the sake of completeness, we recall the order structure of
$\overline{\mathbb{R}}$  originally defined  in \cite{JFO} and after
in \cite{JRJ}.

\begin{lema}\label{order}
For all $x\in\overline{\mathbb{R}}$ the following  conditions are equivalent:
\begin{enumerate}
\item[$i)$] Every representative $\hat{x}$ of $x$ satisfies the
  condition
\[(*)\left|\begin{array}{l}
\exists ~N\in\mathbb{N},~\mbox{such that}~\forall~b>0
\forall ~\varphi\in\mathcal{A}_N(\mathbb{K})\\
\mbox{there exists}~\eta(b,\varphi)\in ]0,1],~\mbox{such that}\\
\hat{x}(\varphi_\varepsilon)\ge -\varepsilon^b,~\forall~
\varepsilon\in I_\eta=]0,\eta[.
\end{array}\right.
\]
\item[$ii)$] There exists representative $\hat{x}$ of $x$ satisfying
  (*).
\item[$iii)$] There exists a representative $x_*$ of $x$, such that
  $x_*(\varphi)\ge 0, ~\forall~\varphi\in\mathcal{A}_0(\mathbb{K})$.
\item[$iv)$] There exists $N\in\mathbb{N}$ and a representative $x_*$
  of $x$, such that $x_*(\varphi)\ge 0, ~\forall~\varphi\in\mathcal{A}_N(\mathbb{K})$.
\end{enumerate}
\end{lema}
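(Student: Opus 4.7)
The plan is to establish the four-way equivalence by the cyclic implications $(iii) \Rightarrow (iv) \Rightarrow (ii) \Rightarrow (i) \Rightarrow (ii)$ and then close the loop with the genuinely substantive step $(ii) \Rightarrow (iii)$. Three of the four implications are essentially bookkeeping: $(iii) \Rightarrow (iv)$ is trivial (take $N=0$); $(iv) \Rightarrow (ii)$ is immediate because $x_*(\varphi_\varepsilon) \geq 0 \geq -\varepsilon^b$ for every $b>0$ whenever $\varphi \in \mathcal{A}_N(\mathbb{K})$; and $(i) \Rightarrow (ii)$ is tautological.

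For $(ii) \Rightarrow (i)$, I would use the fact that two representatives of the same $x \in \overline{\mathbb{R}}$ differ by a negligible element, which by the definition of the ideal $\mathcal{N}(\mathbb{K})$ decays faster than any power of $\varepsilon$ on sequences $\varphi_\varepsilon$ with $\varphi \in \mathcal{A}_M(\mathbb{K})$ for $M$ large. So if $\hat{x}$ satisfies (*) with parameter $N$, then for an arbitrary representative $\hat{y} = \hat{x} + n$ with $n$ negligible, choosing $N' \geq N$ large enough to trigger the negligibility estimate for $n$ at order $b$ gives $\hat{y}(\varphi_\varepsilon) \geq -\varepsilon^b - C\varepsilon^b \geq -2\varepsilon^b$ on $\mathcal{A}_{N'}(\mathbb{K})$. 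Absorbing the constant into a slightly smaller exponent (still arbitrary) yields (*) for $\hat{y}$.

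The core of the argument is $(ii) \Rightarrow (iii)$, which is where a non-negative representative must be manufactured out of a merely ``asymptotically non-negative'' one. The natural candidate is
\[
x_*(\varphi) := \max\bigl(\hat{x}(\varphi),\,0\bigr), \qquad \varphi \in \mathcal{A}_0(\mathbb{K}),
\]
which obviously satisfies $x_*(\varphi) \geq 0$ everywhere. The point is to verify that $x_* \in \mathcal{E}_M(\mathbb{K})$ and that $x_* - \hat{x} \in \mathcal{N}(\mathbb{K})$, so that $x_*$ really represents $x$. Moderation of $x_*$ is immediate from $|x_*(\varphi)| \leq |\hat{x}(\varphi)|$. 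For negligibility of the difference, note that
\[
x_*(\varphi) - \hat{x}(\varphi) = \max\bigl(-\hat{x}(\varphi),\,0\bigr),
\]
and the hypothesis (*) applied with parameter $b$ gives, for $\varphi \in \mathcal{A}_N(\mathbb{K})$ and $\varepsilon < \eta(b,\varphi)$, the bound $-\hat{x}(\varphi_\varepsilon) \leq \varepsilon^b$. Hence $0 \leq x_*(\varphi_\varepsilon) - \hat{x}(\varphi_\varepsilon) \leq \varepsilon^b$, which is exactly the negligibility estimate in the full Colombeau sense.

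The main obstacle I expect is the last verification: one must be careful that the ``max'' construction interacts correctly with the hierarchy of test objects $\mathcal{A}_N(\mathbb{K})$ used in the full algebra (as opposed to the simpler $\varepsilon$-indexed setting of $\overline{\mathbb{K}}_s$), and that the parameter $N$ coming from (*) is compatible with the $N$ required by the definition of $\mathcal{N}(\mathbb{K})$. This is essentially a matching of quantifiers, but it is the only place where the full-algebra machinery really plays a role.
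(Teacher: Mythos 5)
The paper itself does not prove Lemma \ref{order}; it recalls it from \cite{JFO} and \cite{JRJ}, so there is no in-paper proof to compare against. Your cyclic structure is reasonable, and the key construction in $(ii)\Rightarrow(iii)$ --- the pointwise positive part $x_*(\varphi)=\max(\hat{x}(\varphi),0)$ --- is sound: moderation is clear, and the bound $0\le x_*(\varphi_\varepsilon)-\hat{x}(\varphi_\varepsilon)\le\varepsilon^b$ supplied by $(*)$ is in fact \emph{stronger} than the full-algebra negligibility condition of Definition \ref{colom-6} (one may take $p=N$, $\gamma(q)=q+1$, $C=1$, using $b=q+1-N$ at level $q\ge N$). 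So that half of the argument is correct.

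The gap is in $(ii)\Rightarrow(i)$. You assert that a negligible element ``decays faster than any power of $\varepsilon$ on sequences $\varphi_\varepsilon$ with $\varphi\in\mathcal{A}_M(\mathbb{K})$ for $M$ large''; that is the \emph{simplified}-algebra picture, and it is false in the full algebra. By Definition \ref{colom-6}, a null $n$ satisfies $|n(\varphi_\varepsilon)|\le C\varepsilon^{\gamma(q)-p}$ for $\varphi\in\mathcal{A}_q$, $q\ge p$: at a fixed level $q$ the rate $\gamma(q)-p$ is a single fixed exponent, not one that can be pushed past an arbitrary $b$. Correspondingly, your $N'$ is chosen ``large enough to trigger the negligibility estimate for $n$ at order $b$'', so it depends on $b$; but $(*)$ has the quantifier order $\exists N\,\forall b\,\forall\varphi$, so $N'$ must be frozen before $b$ is chosen. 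Once $N'$ is fixed, a test function $\varphi$ at level exactly $N'$ gives you only $|n(\varphi_\varepsilon)|\le C\varepsilon^{\gamma(N')-p}$, which does not dominate $\varepsilon^b$ when $b>\gamma(N')-p$. The same obstruction also blocks a direct $(iii)\Rightarrow(i)$ or $(iv)\Rightarrow(i)$, so this is not a quantifier-bookkeeping omission you can wave away: to close the representative-independence step you need either to work from the exact quantifier arrangement of $(*)$ in \cite{JFO}, \cite{JRJ} (which may well read $\forall b\,\exists N$ rather than the $\exists N\,\forall b$ printed here) or to supply a genuinely different argument.
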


Based in Lemma \ref{order} we have the following definition.

\begin{defn}\label{colom-4}
An element $x\in\overline{\mathbb{R}}$ is said to be non-negative,
quasi-positive or $q$-positive, if it has a representative satisfying one
of the conditions of Lemma \ref{order}. We shall denote this by $x\ge
0$. We shall also say  that $x$ is non-positive, quasi-negative or
$q$-negative if $-x$ is $q$-positive. If $y\in\overline{\mathbb{R}}$
is another element, then we write $x\ge y$ if $x-y$ is $q$-positive and
$x\le y$ if $y-x$ is $q$-positive.
\end{defn}

The Proposition \ref{colom-5} below appears in \cite{JRJ} and we will
extend it for
$\overline{\mathbb{H}}$ in  Proposition \ref{colom-13} in the end
Section \ref{sec-3}. 

\begin{prop}[Convexity of ideals]\label{colom-5}
Let $\mathfrak{J}$ be an ideal of $\overline{\mathbb{K}}$ and
$x,y\in\overline{\mathbb{K}}$. Then:
\begin{enumerate}
\item[$1)$] $x\in\mathfrak{J}$ iff $|x|\in\mathfrak{J}$.
\item[$2)$] If $x\in\mathfrak{J}$ and $|y|\le |x|$, then
  $y\in\mathfrak{J}$.
\item[$3)$] If $\mathbb{K}=\mathbb{R},x\in\mathfrak{J}$ and $0\le y\le
  x$, then $y\in\mathfrak{J}$.  
\end{enumerate}
\end{prop}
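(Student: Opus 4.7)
The three items of Proposition \ref{colom-5} are not independent: (3) is an immediate consequence of (2), since for real $x,y$ with $0\le y\le x$ we have $|y|=y\le x=|x|$; and (1) also reduces to (2) by taking $y=|x|$ in the forward direction and by exchanging the roles of $x$ and $|x|$ in the reverse direction, using in both cases that $||x||=|x|$. My plan is therefore to concentrate on (2), from which the other two items will follow directly.

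\textbf{Core construction.} Fix representatives $\hat{x}$ of $x$ and $\hat{y}$ of $y$, and build $u\in\overline{\mathbb{K}}$ with $y=ux$; since $x\in\mathfrak{J}$, this will give $y\in\mathfrak{J}$. The idea is to use the \emph{truncated quotient}
\[
\hat{u}(\varphi):=\begin{cases}\hat{y}(\varphi)/\hat{x}(\varphi), & \text{if }\hat{x}(\varphi)\ne 0\text{ and }|\hat{y}(\varphi)|\le 2\,|\hat{x}(\varphi)|,\\ 0, & \text{otherwise},\end{cases}
\]
which is bounded by $2$ and hence moderate. By construction, on the ``good'' set $G:=\{\varphi:\hat{x}(\varphi)\ne 0\ \text{and}\ |\hat{y}(\varphi)|\le 2|\hat{x}(\varphi)|\}$ one has $\hat{u}\hat{x}=\hat{y}$, so $\hat{u}\hat{x}-\hat{y}=-\hat{y}\cdot\mathbf{1}_{G^c}$, and it remains to show that this error is negligible.

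\textbf{Negligibility of the error.} The hypothesis $|y|\le|x|$ says $|x|-|y|\ge 0$ in $\overline{\mathbb{R}}$, so by Lemma \ref{order} every representative of $|x|-|y|$---in particular $|\hat{x}|-|\hat{y}|$---satisfies the condition (*): for every $b>0$ there exist $N\in\mathbb{N}$ and $\eta=\eta(b,\varphi)>0$ such that for all $\varphi\in\mathcal{A}_N(\mathbb{K})$ and $0<\varepsilon<\eta$,
\[
|\hat{y}(\varphi_\varepsilon)|\le|\hat{x}(\varphi_\varepsilon)|+\varepsilon^b.
\]
On $G^c$ either $\hat{x}(\varphi_\varepsilon)=0$, in which case directly $|\hat{y}(\varphi_\varepsilon)|\le\varepsilon^b$, or $|\hat{y}(\varphi_\varepsilon)|>2|\hat{x}(\varphi_\varepsilon)|$, in which case $|\hat{y}|/2<|\hat{y}|-|\hat{x}|\le\varepsilon^b$ and therefore $|\hat{y}|<2\varepsilon^b$. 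In either subcase $|(\hat{u}\hat{x}-\hat{y})(\varphi_\varepsilon)|\le 2\varepsilon^b$ uniformly for $\varphi\in\mathcal{A}_N(\mathbb{K})$, and since $b$ is arbitrary this is exactly the negligibility condition. Hence $ux=y$ in $\overline{\mathbb{K}}$ and $y\in\mathfrak{J}$, proving (2); applying (2) with $y=|x|$ and then symmetrically gives (1), and (3) follows as already noted.

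\textbf{Main obstacle.} The technical hurdle is the translation of the abstract inequality $|y|\le|x|$ into the concrete pointwise bound $|\hat{y}(\varphi_\varepsilon)|\le|\hat{x}(\varphi_\varepsilon)|+\varepsilon^b$, together with the correct quantifier structure over $b$, $N$, and $\varepsilon$. In the full Colombeau setting the asymptotic scale $\mathcal{A}_p(\mathbb{K})$ plays the role played by a single index in the simplified setting treated in \cite{CFJ}, so the dependence $N=N(b)$ must be tracked explicitly to match the negligibility condition; this is precisely the layered bookkeeping that justifies the last step above. Once this is done, the ideal-theoretic content (closure under multiplication by the moderate element $u$) and the symmetry arguments reducing (1) and (3) to (2) are routine.
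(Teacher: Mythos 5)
The paper states Proposition \ref{colom-5} without proof, citing \cite{JRJ}, so there is no in-text argument to compare against; taken on its own, your proof is correct. The truncated quotient $\hat u$ bounded by $2$ is moderate, $\hat u\hat x-\hat y$ is supported where $\hat x(\varphi_\varepsilon)=0$ or $|\hat y(\varphi_\varepsilon)|>2|\hat x(\varphi_\varepsilon)|$, and on that set the hypothesis $|y|\le|x|$ combined with Lemma \ref{order} forces $|\hat y(\varphi_\varepsilon)|\le 2\varepsilon^b$, giving negligibility and $y=ux\in\mathfrak{J}$; the reductions of $1)$ and $3)$ to $2)$ via $z\ge 0\Rightarrow|z|=z$ and $\big||x|\big|=|x|$ are sound.

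One point worth flagging: you restate condition $(*)$ of Lemma \ref{order} with the quantifiers in the order ``for every $b>0$ there exist $N\in\mathbb{N}$ \dots'', allowing $N$ to depend on $b$, and your closing remark about ``the dependence $N=N(b)$'' suggests you believed this is what must be tracked. In fact the lemma gives $\exists\,N\ \forall\,b$, with a single $N$ uniform in $b$. This is what makes the final packaging into the negligibility format immediate: take $p=N$, $\gamma(q)=q+1$, $C=2$, and for $q\ge p$, $\varphi\in\mathcal{A}_q(\mathbb{K})\subseteq\mathcal{A}_N(\mathbb{K})$, apply your estimate with $b=\gamma(q)-p\ge 1$ to get $|\hat u\hat x-\hat y|(\varphi_\varepsilon)\le 2\varepsilon^{\gamma(q)-p}$ for $\varepsilon<\eta(b,\varphi)$. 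If $N$ really did depend on $b$ and grew unboundedly, one would need to construct $\gamma$ from the inverse of $N(\cdot)$, which is an extra step. So the bookkeeping is simpler than you feared, and the argument closes cleanly once the quantifier order is restored.
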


Let $r\in\mathbb{R}$. Then $\dot{\alpha}_r\in\overline{\mathbb{R}}$
is the element having $\varphi\in\mathcal{A}_0(\mathbb{K})\mapsto
(i(\varphi))^r\in\mathbb{R}_+$, where
$i(\varphi)=\diam(\supp(\varphi))\ne 0, ~\forall~\varphi\in\mathcal{A}_0(\mathbb{K})$, i.e.,
$\dot{\alpha}_r(\varphi)=(i(\varphi))^r$ as a representative. It has the
property that 
$$\Vert\dot{\alpha}_r\Vert=e^{-r}\qquad\mbox{and}\qquad\Vert\dot{\alpha}_rx\Vert=\Vert\dot{\alpha}_r\Vert\Vert
x\Vert,$$ for any $x\in\overline{\mathbb{K}}$. Thus, we have that an
element $x\in\overline{\mathbb{K}}$ is a unit iff there exists
$r\in\mathbb{R}$, such that $|x|\ge\dot{\alpha}_r$ (see Theorem \ref{colom-2}).

In \cite{JRJ}, it is also proved that for
$0<x\in\overline{\mathbb{K}}$, there exists
$y\in\overline{\mathbb{K}}$, such that $x=y^2$. In Sections
\ref{sec-3} and \ref{sec-4}, we will use freely some of the 
results in this section.

\section{The topological algebra of the quaternion over the ring of Colombeau's
full generalized numbers}\label{sec-3}

Here, $(\mathbb{H},|\cdot|)$ will denote the classical ring of real
quaternion with usual metric and  basis $\{1,i,j,k\}$ and
$\mathbb{K=\mathbb{R}}$ unless otherwise stated. Moreover, if $S$ is a ring,
then $\overline{\mathbb{H}}(S)$ denotes the quaternion algebra
over $S$. We begin with the following definition  about moderate and null functions.

\begin{defn}\label{colom-6}
\begin{enumerate}
\item[$i)$] A function $\hat{x}:\mathcal{A}_0(\mathbb{K})\to \mathbb{H}$ is
moderate if there exists $p\in\mathbb{N}$, such that for all
$\varphi\in\mathcal{A}_p(\mathbb{K})$, there exists $C=C_\varphi>0$
and $\eta=\eta_\varphi>0$, such
that $$|\hat{x}(\varphi_\varepsilon)|\le C\varepsilon^{-p},~\forall~0<\varepsilon<\eta.$$ 
\item[$ii)$] Let
$\mathcal{E}_M(\mathbb{H})=\{\hat{x}:\mathcal{A}_0(\mathbb{K})\to\mathbb{H}|\hat{x}~
\mbox{is moderate}\}$ and
$\mathcal{N}(\mathbb{H})=\{\hat{x}\in\mathcal{E}_M(\mathbb{H})|\hat{x}~\mbox{is
  null}\}$. We say that $\hat{x}\in\mathcal{E}_M(\mathbb{H})$ is
null if there exists $p\in\mathbb{N}$ and $\gamma\in\Gamma$, such that
for all $\varphi\in\mathcal{A}_q(\mathbb{K}),~q\ge p$ there exists
$C=C_\varphi>0$ and $\eta=\eta_\varphi>0$, such
that $$|\hat{x}(\varphi_\varepsilon)|\le
C\varepsilon^{\gamma(q)-p},~\forall ~0<\varepsilon<\eta,$$ where  
$\Gamma:=\{\gamma:\mathbb{N}\to\mathbb{R}^+|\gamma(n)<\gamma(n+1),~
\forall~n\in\mathbb{N}~\mbox{and}~\lim\limits_{n\to\infty}\gamma(n)=\infty\}$
is the set of the strict increasing sequences diverging to infinity
when $n\to\infty$.
\end{enumerate}
\end{defn}

\begin{lema}\label{colom-7}
We have the following isomorphism
\[\mathcal{E}_M(\mathbb{H})/\mathcal{N}(\mathbb{H})\cong
\mathbb{H}(\mathcal{E}_M(\mathbb{R}))/\mathbb{H}(\mathcal{N}(\mathbb{R}))
=\mathbb{H}(\overline{\mathbb{R}}).\] 
\end{lema}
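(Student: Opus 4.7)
The plan is to identify $\mathcal{E}_M(\mathbb{H})$ with the quaternion algebra $\mathbb{H}(\mathcal{E}_M(\mathbb{R}))$ and $\mathcal{N}(\mathbb{H})$ with the ideal $\mathbb{H}(\mathcal{N}(\mathbb{R}))$ by splitting each quaternion-valued function into its four real components, and then to conclude via the general quotient isomorphism $\mathbb{H}(R)/\mathbb{H}(I) \cong \mathbb{H}(R/I)$ applied to $R = \mathcal{E}_M(\mathbb{R})$, $I = \mathcal{N}(\mathbb{R})$.

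First, I write every $\hat{x}:\mathcal{A}_0(\mathbb{K}) \to \mathbb{H}$ as $\hat{x} = \hat{x}_0 + \hat{x}_1 i + \hat{x}_2 j + \hat{x}_3 k$ with $\hat{x}_\ell : \mathcal{A}_0(\mathbb{K}) \to \mathbb{R}$. The pointwise inequalities $|\hat{x}_\ell(\varphi)| \le |\hat{x}(\varphi)|$ and $|\hat{x}(\varphi)| \le \sum_{\ell=0}^{3} |\hat{x}_\ell(\varphi)|$ let me transfer moderateness in both directions: if $\hat{x}$ is moderate with parameter $p$, each $\hat{x}_\ell$ is moderate with the same $p$; conversely, given componentwise parameters $(p_\ell, C_\ell)$, I take $p = \max_\ell p_\ell$ and $C = \sum_\ell C_\ell$. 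This sets up a bijection $\Phi : \mathcal{E}_M(\mathbb{H}) \to \mathbb{H}(\mathcal{E}_M(\mathbb{R}))$, $\Phi(\hat{x}) = \hat{x}_0 + \hat{x}_1 i + \hat{x}_2 j + \hat{x}_3 k$. Since addition and multiplication in $\mathcal{E}_M(\mathbb{H})$ are pointwise and obey the quaternion relations $i^2 = j^2 = k^2 = ijk = -1$ which are precisely the defining relations on the right, $\Phi$ is a ring isomorphism.

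Next, I argue the same component-splitting carries $\mathcal{N}(\mathbb{H})$ bijectively onto $\mathbb{H}(\mathcal{N}(\mathbb{R}))$. The forward direction is immediate with the same $p$ and $\gamma$. For the converse, given null components with parameters $(p_\ell, \gamma_\ell)_{\ell=0,\ldots,3}$, I set $p = \max_\ell p_\ell$ and $\gamma(n) := \min_\ell \gamma_\ell(n)$, and verify $\gamma \in \Gamma$: strict monotonicity follows because if $\ell'$ attains the minimum at $n+1$, then $\gamma(n+1) = \gamma_{\ell'}(n+1) > \gamma_{\ell'}(n) \ge \gamma(n)$, and divergence of $\gamma$ is immediate from the divergence of each $\gamma_\ell$. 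Summing the four componentwise null estimates, and shrinking to the minimum of the four $\eta_\varphi$, then gives $\hat{x} \in \mathcal{N}(\mathbb{H})$.

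At this point $\Phi$ is a ring isomorphism sending $\mathcal{N}(\mathbb{H})$ onto $\mathbb{H}(\mathcal{N}(\mathbb{R}))$, so it descends to an isomorphism $\mathcal{E}_M(\mathbb{H})/\mathcal{N}(\mathbb{H}) \cong \mathbb{H}(\mathcal{E}_M(\mathbb{R}))/\mathbb{H}(\mathcal{N}(\mathbb{R}))$. For the final equality, I invoke the elementary fact that for any commutative ring $R$ with ideal $I$, the projection $R \to R/I$ extends coordinatewise to a surjective ring homomorphism $\mathbb{H}(R) \to \mathbb{H}(R/I)$ with kernel exactly $\mathbb{H}(I)$; applied to $R = \mathcal{E}_M(\mathbb{R})$ and $I = \mathcal{N}(\mathbb{R})$ this yields $\mathbb{H}(\overline{\mathbb{R}})$. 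The main obstacle is the bookkeeping when reassembling four componentwise null estimates into a single one, in particular ensuring the combined $\gamma$ remains strictly increasing and divergent; this is the only non-formal step, and the $(p,\gamma)$ constructed above resolves it.
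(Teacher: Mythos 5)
Your proof is correct and follows essentially the same route as the paper's: split a quaternion-valued map into its four real components, show $\mathcal{E}_M(\mathbb{H}) \cong \mathbb{H}(\mathcal{E}_M(\mathbb{R}))$ and $\mathcal{N}(\mathbb{H}) \cong \mathbb{H}(\mathcal{N}(\mathbb{R}))$ via the two-sided norm inequalities, then pass to the quotient. The only difference is that you spell out the bookkeeping for reassembling four null estimates (taking $p = \max_\ell p_\ell$, $\gamma = \min_\ell \gamma_\ell$, and checking $\gamma \in \Gamma$), whereas the paper simply says the null case follows ``by similar methods'' and leaves this to the reader.
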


\begin{proof}
To prove this, we need to show that there exists the isomorphisms:
\begin{enumerate}
\item[$i)$]
  $\mathcal{E}_M(\mathbb{H})\cong\mathbb{H}(\mathcal{E}_M(\mathbb{R}))$;
\item[$ii)$] $\mathcal{N}(\mathbb{H})\cong\mathbb{H}(\mathcal{N}(\mathbb{R}))$.
\end{enumerate}
In $i)$ we need to show that
$\hat{x}(\varphi)=\hat{x}_0(\varphi)1+\hat{x}_1(\varphi)i+
\hat{x}_2(\varphi)j+\hat{x}_3(\varphi)k\in\mathcal{E}_M(\mathbb{H})$
if and only if $\hat{x}_n(\varphi)\in\mathcal{E}_M(\mathbb{R}),~\forall
~n=0,1,2,3$, i.e.,
$\hat{x}(\varphi)\in\mathbb{H}(\mathcal{E}_M(\mathbb{R}))$. Indeed,
$\hat{x}(\varphi)\in\mathcal{E}_M(\mathbb{H})\Leftrightarrow
\exists~p\in\mathbb{N}$, such that for all
$\varphi\in\mathcal{A}_p(\mathbb{K})$, there exists $C=C_\varphi>0$ and
$\eta=\eta_\varphi>0$, such that $|\hat{x}(\varphi_\varepsilon)|\le
C\varepsilon^{-p},~\forall~0<\varepsilon<\eta
\Leftrightarrow
|\hat{x}_0(\varphi_\varepsilon)1+\hat{x}_1(\varphi_\varepsilon)i+
\hat{x}_2(\varphi_\varepsilon)j+\hat{x}_3(\varphi_\varepsilon)k|\le
C\varepsilon^{-p},~\forall~0<\varepsilon<\eta\Leftrightarrow
|\hat{x}_n(\varphi_\varepsilon)|\le
C\varepsilon^{-p},~\forall~n=0,1,2,3~\mbox{and}~
\forall~0<\varepsilon<\eta\Leftrightarrow\hat{x}_n(\varphi)\in\mathcal{E}_M(\mathbb{R}),
~\forall~n=0,1,2,3\Leftrightarrow\hat{x}(\varphi)\in\mathbb{H}(\mathcal{E}_M(\mathbb{R}))$.

By similar methods of  $i)$ we have that 
$\mathcal{N}(\mathbb{H})\cong\mathbb{H}(\mathcal{N}(\mathbb{R}))$. Hence,
we conclude
that $$\mathcal{E}_M(\mathbb{H})/\mathcal{N}(\mathbb{H})\cong 
\mathbb{H}(\mathcal{E}_M(\mathbb{R}))/\mathbb{H}(\mathcal{N}(\mathbb{R}))=\mathbb{H}(\overline{\mathbb{R}}),$$
and it follows the assertion.
\end{proof}

We denote $\mathbb{H}(\overline{\mathbb{R}})$ by
$\overline{\mathbb{H}}$. Let
$x=x_0+x_1i+x_2j+x_3k\in\overline{\mathbb{H}}$ and its conjugate
$\bar{x}:=x_0-(x_1i+x_2i+x_3k)$, we have that
$n(x):=\sqrt{x_0^2+x_1^2+x_2^2+x_3^2}=\sqrt{x\bar{x}}$ is the norm
of $x\in\overline{\mathbb{H}}$. This defines  a function of
$\overline{\mathbb{H}}$ in $\overline{\mathbb{R}}_+$ and we get that each 
$x\in\overline{\mathbb{H}}$ associates to $n(x)=\sqrt{x\bar{x}}\in\overline{\mathbb{R}}_+$. 

\begin{lema}\label{unit}
Let $x\in\overline{\mathbb{H}}$. Then 
\begin{enumerate}
\item[$a)$] $x\in \inv(\overline{\mathbb{H}})$ if and only if
  $n(x)\in \inv(\overline{\mathbb{R}})$ and in this case, we have
  that $x^{-1}=(n(x))^{-1}\bar{x}$.
\item[$b)$] An element $x\in\overline{\mathbb{H}}$ is a zero divisor
  if and only if $n(x)$ is a zero divisor. In particular, we have that
  an element of $\overline{\mathbb{H}}$ is either a unit or a zero divisor.
\item[$c)$] Let $x=x_0+x_1i+x_2j+x_3k$. If one of the $x_n$,
  $n=0,1,2,3$ is a unit, then $x$ is a unit.   
  \end{enumerate}
\end{lema}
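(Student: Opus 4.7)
The plan is to exploit the classical quaternionic identity $x\bar{x}=\bar{x}x=n(x)^2$, together with the fact that $n(x)^2$ is central (lying in $\overline{\mathbb{R}}\subset\overline{\mathbb{H}}$), to reduce all three statements to the commutative theory of Section \ref{sec-2}. These identities transfer from the classical algebra $\mathbb{H}$ to $\overline{\mathbb{H}}$ by a representative-wise computation, justified by Lemma \ref{colom-7}.

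For part $a)$, if $n(x)$ is a unit of $\overline{\mathbb{R}}$ then so is $n(x)^2=x\bar{x}$; since this element is central, the prescription $y:=(n(x))^{-2}\bar{x}$ produces a two-sided inverse of $x$, giving both existence and the explicit inversion formula. Conversely, if $x$ admits an inverse $z$, applying the conjugation anti-involution to $xz=1$ yields $\bar{z}\bar{x}=1$, so $\bar{x}$ is also a unit and hence $n(x)^2=x\bar{x}$ is a unit of $\overline{\mathbb{R}}$. In the commutative ring $\overline{\mathbb{R}}$, any relation $n(x)^2 w=1$ immediately gives that $n(x)$ itself is a unit with inverse $n(x)w$.

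For part $b)$, the reverse implication follows from $a)$: if $n(x)$ is not a zero divisor, then by Theorem \ref{colom-1} applied in $\overline{\mathbb{R}}$ it is a unit, so $x$ is a unit and hence not a zero divisor. Conversely, assume $n(x)$ is a zero divisor and invoke Theorem \ref{colom-1} $ii)$ to produce a nonzero idempotent $e\in\overline{\mathbb{R}}$ with $n(x)e=0$. Using centrality of $e$, I would compute $x(\bar{x}e)=e\cdot x\bar{x}=e\cdot n(x)^2=0$; if $\bar{x}e\neq 0$ this already shows $x$ is a zero divisor. In the degenerate subcase $\bar{x}e=0$, conjugating and using that $\bar{e}=e$ (as $e\in\overline{\mathbb{R}}$) gives $ex=0$, which again exhibits $x$ as a zero divisor since $e\neq 0$. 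The closing dichotomy in $\overline{\mathbb{H}}$ then follows from $a)$ combined with the same dichotomy in $\overline{\mathbb{R}}$ given by Theorem \ref{colom-1} $ii)$.

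For part $c)$, use the order-characterisation of units recalled at the end of Section \ref{sec-2}: if some $x_n$ is a unit of $\overline{\mathbb{R}}$ then $|x_n|\ge\dot{\alpha}_r$ for some $r\in\mathbb{R}$, whence $x_n^2=|x_n|^2\ge\dot{\alpha}_{2r}$. The remaining squares $x_m^2$ are $q$-nonnegative by Lemma \ref{order} (each has the pointwise-nonnegative representative $\hat{x}_m^2$), so $n(x)^2=\sum_{m=0}^{3}x_m^2\ge x_n^2\ge\dot{\alpha}_{2r}$, exhibiting $n(x)^2$ as a unit. Hence $n(x)$ is a unit and, by $a)$, so is $x$. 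The only genuine obstacle in the whole argument is the degenerate subcase of $b)$ in which $\bar{x}e$ vanishes: here non-commutativity must be handled by the anti-multiplicative behaviour of conjugation together with the self-conjugacy of real idempotents. Everything else is a clean reduction to Section \ref{sec-2} via the central norm identity.
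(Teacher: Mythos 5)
Your argument is correct and self-contained, reducing everything to the commutative theory via the central identity $x\bar{x}=\bar{x}x=n(x)^{2}$, the Fundamental Theorem \ref{colom-1}, and the order structure of $\overline{\mathbb{R}}$. The paper itself does not give a proof here — it just writes ``the proof is the same as \cite{CFJ}'' — so you are essentially reconstructing the CFJ argument, and that reconstruction is the natural one: part $a)$ by exhibiting $(n(x)^{2})^{-1}\bar{x}$ as a two-sided inverse and, conversely, applying the conjugation anti-involution to $xz=1$ to deduce that $x\bar{x}$ is a unit of the commutative center; part $b)$ by producing a real idempotent $e$ annihilating $n(x)$ via Theorem \ref{colom-1}~$ii)$, then observing $x(\bar{x}e)=n(x)^{2}e=0$ and handling the degenerate case $\bar{x}e=0$ by conjugating to get $ex=0$ (your explicit treatment of this subcase is the only spot where noncommutativity genuinely intrudes, and you handle it correctly); part $c)$ by estimating $n(x)^{2}=\sum x_{m}^{2}\ge x_{n}^{2}\ge\dot{\alpha}_{2r}$ and invoking Theorem \ref{colom-2}. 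Two small remarks. First, the inversion formula you derive is $x^{-1}=(n(x))^{-2}\bar{x}$, not $(n(x))^{-1}\bar{x}$ as printed in the lemma; given the paper's definition $n(x)=\sqrt{x\bar{x}}$, yours is the correct formula, and the lemma statement has a typographical slip (it would match if $n$ were defined as the reduced norm $x\bar{x}$ without the square root), so you should flag the mismatch rather than silently reconcile it. Second, in $b)$ the step ``$n(x)$ not a zero divisor $\Rightarrow$ $n(x)$ a unit'' uses the dichotomy of Theorem \ref{colom-1}~$ii)$, which is stated for nonzero elements; this forces you to say a word about the case $n(x)=0$, which does occur only when $x=0$ (since $\sum x_m^2=0$ forces each $x_m=0$ in $\overline{\mathbb{R}}$), and then the claim is vacuous or a matter of the chosen convention on whether $0$ counts as a zero divisor. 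Neither point affects the substance of the proof.
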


\begin{proof}
The proof is the same as \cite{CFJ}.
\end{proof}

\begin{defn}\label{colom-8}
An element $x\in\overline{\mathbb{H}}$ is associated to zero, a
property which will be denoted by $x\approx 0$, if for some (or
equivalently, for each) representative $(x(\varphi))_\varphi$ of $x$
we have \[\exists~p\in\mathbb{N}~\mbox{such
  that}~\lim\limits_{\varepsilon\downarrow
  0}x(\varphi_\varepsilon)=0~\forall
~\varphi\in\mathcal{A}_p(\mathbb{K}).\] Two elements
$x,y\in\overline{\mathbb{H}}$ are associated if $x-y\approx 0$, a
property which will be denoted by $x\approx y$. If there exists some
$a\in\mathbb{H}$ with $x\approx a$, then $a$ is called associated
quaternion or shadow of $x$.
\end{defn}

\begin{lema}\label{colom-9}
Let $x,y\in\overline{\mathbb{H}}$ and $a\in\overline{\mathbb{H}}$. Then
\begin{enumerate}
\item[$a)$] $x\approx 0\Leftrightarrow x_n\approx 0, ~\forall~n=0,1,2,3$;
\item[$b)$] $x\approx y\Leftrightarrow x_n\approx y_n,~\forall~n=0,1,2,3$;
\item[$c)$] $x\approx a,~x=x_0+x_1i+x_2j+x_3k\in\overline{\mathbb{H}}$ if and only if there exists
  $a_0,a_1,a_2,a_3\in\mathbb{R}$ such $x_n\approx
  a_n,~\forall~n=0,1,2,3$, i.e, $a_n,~n=0,1,2,3$ are associated
  numbers or shadows of $x_n\in\overline{\mathbb{R}},~n=0,1,2,3$,
  respectively (see Definition 1.3 in \cite{JRJ}).
\end{enumerate}
\end{lema}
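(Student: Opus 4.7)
The plan is to reduce everything to (a) and then exploit the natural $\mathbb{R}$-linear decomposition $x=x_01+x_1i+x_2j+x_3k$ together with the standard coordinate-wise bounds
\[
|x_n(\varphi_\varepsilon)|\le|x(\varphi_\varepsilon)|\le\sum_{m=0}^{3}|x_m(\varphi_\varepsilon)|,\qquad n=0,1,2,3,
\]
which follow from the definition of the quaternionic modulus $|\cdot|$ on $\mathbb{H}$. These inequalities are what make the pointwise limit $\lim_{\varepsilon\downarrow 0}x(\varphi_\varepsilon)=0$ equivalent to the simultaneous vanishing of each scalar limit $\lim_{\varepsilon\downarrow 0}x_n(\varphi_\varepsilon)=0$.

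For (a), I would fix a representative $(x(\varphi))_\varphi$ of $x$, which by Lemma \ref{colom-7} is of the form $x_0(\varphi)1+x_1(\varphi)i+x_2(\varphi)j+x_3(\varphi)k$ with $(x_n(\varphi))_\varphi$ a representative of $x_n\in\overline{\mathbb{R}}$. If $x\approx 0$, pick the $p\in\mathbb{N}$ from Definition \ref{colom-8}; then for every $\varphi\in\mathcal{A}_p(\mathbb{K})$ the left inequality above gives $\lim_{\varepsilon\downarrow 0}x_n(\varphi_\varepsilon)=0$, so $x_n\approx 0$ with the same $p$. Conversely, if $x_n\approx 0$ with parameter $p_n$ for each $n$, set $p:=\max\{p_0,p_1,p_2,p_3\}$; since the family $\{\mathcal{A}_q(\mathbb{K})\}_q$ is decreasing in $q$, any $\varphi\in\mathcal{A}_p(\mathbb{K})$ lies in every $\mathcal{A}_{p_n}(\mathbb{K})$, and the right inequality above yields $\lim_{\varepsilon\downarrow 0}|x(\varphi_\varepsilon)|=0$, hence $x\approx 0$. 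The only nuisance here is making sure the single parameter $p$ works for all four coordinates simultaneously; taking the maximum handles this cleanly.

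For (b), I would apply (a) to $z:=x-y$. Since addition in $\overline{\mathbb{H}}$ is coordinate-wise, $z_n=x_n-y_n$ for $n=0,1,2,3$, so $x\approx y$ iff $z\approx 0$ iff each $z_n=x_n-y_n\approx 0$, that is, iff $x_n\approx y_n$ for every $n$. For (c), I would view $a=a_0+a_1i+a_2j+a_3k$ (with real $a_n$) as an element of $\overline{\mathbb{H}}$ via its constant representative $\varphi\mapsto a$, whose coordinate representatives are the constants $\varphi\mapsto a_n$, i.e.\ the canonical images in $\overline{\mathbb{R}}$ of the $a_n\in\mathbb{R}$. Then (c) is precisely (b) applied with $y=a$, combined with the fact (Definition~1.3 of \cite{JRJ}) that $x_n\approx a_n$ in $\overline{\mathbb{R}}$ is exactly the scalar shadow condition. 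I do not expect any serious obstacle; the only point requiring a little care is verifying that the definition of $\approx$ is independent of the chosen representative (so that the decomposition produced by Lemma \ref{colom-7} can be used without ambiguity), but this follows because null elements of $\mathcal{N}(\mathbb{H})$ project to null elements in each $\mathcal{N}(\mathbb{R})$-coordinate and tend to zero in the sense of Definition \ref{colom-8}.
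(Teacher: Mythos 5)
Your proof is correct and follows essentially the same coordinate-wise reduction the paper uses; the paper presents part $a)$ as a chain of equivalences that tacitly passes between $\lim_{\varepsilon\downarrow 0}x(\varphi_\varepsilon)=0$ and the four scalar limits, whereas you make the underlying norm inequalities explicit and take $p=\max\{p_0,p_1,p_2,p_3\}$ to fix a common parameter in the converse direction. Parts $b)$ and $c)$ match the paper's treatment (reduce to $a)$ via $z=x-y$, then specialize $y$ to the constant quaternion $a$).
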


\begin{proof}
$a)$: Let
$x=x_0+x_1i+x_2j+x_3k\in\overline{\mathbb{H}}$. Then $x\approx
0\Leftrightarrow \exists~ p\in\mathbb{N}, ~\mbox{such that}
\lim\limits_{\varepsilon\downarrow
  0}x(\varphi_\varepsilon)=0,~\forall~\varphi\in\mathcal{A}_p(\mathbb{K})\Leftrightarrow
 \lim\limits_{\varepsilon\downarrow 0}(x_0(\varphi_\varepsilon)+
x_1(\varphi_\varepsilon)i+x_2(\varphi_\varepsilon)j+x_3(\varphi_\varepsilon)k)=0,
~\forall~\varphi\in\mathcal{A}_p(\mathbb{K})\Leftrightarrow \lim\limits_{\varepsilon\downarrow
  0}x_0(\varphi_\varepsilon)+ \lim\limits_{\varepsilon\downarrow
  0}x_1(\varphi_\varepsilon)i+\lim\limits_{\varepsilon\downarrow
  0}x_2(\varphi_\varepsilon)j+\lim\limits_{\varepsilon\downarrow
  0}x_3(\varphi_\varepsilon)k=0,~\forall~\varphi\in\mathcal{A}_p(\mathbb{K})\Leftrightarrow
\lim\limits_{\varepsilon\downarrow
  0}x_n(\varphi_\varepsilon)=0,~n=0,1,2,3,~\forall~\varphi\in\mathcal{A}_p(\mathbb{K})\Leftrightarrow
x_n\approx 0,~\forall~n=0,1,2,3$ (see Definition 1.3 in \cite{JRJ}).

 $b)$: Let $x=x_0+x_1i+x_2j+x_3k$ and
$y=y_0+y_1i+y_2j+y_3k$ be elements of $\overline{\mathbb{H}}$. Then
$x\approx y\Leftrightarrow (x_0+x_1i+x_2j+x_3k)\approx
(y_0+y_1i+y_2j+y_3k)\Leftrightarrow
(x_0-y_0)+(x_1-y_1)i+(x_2-y_2)j+(x_3-y_3)k\approx 0\Leftrightarrow
(x_n-y_n)\approx 0,~\forall~n=0,1,2,3\Leftrightarrow x_n\approx y_n,
~\forall ~n=0,1,2,3$ (see Definition 1.3 in \cite{JRJ}).

 $c)$: The proof is standard.
\end{proof}

For an element $x\in\overline{\mathbb{H}}$,
let \[A(x):=\{r\in\mathbb{R}|\dot{\alpha}_{-r}x\approx 0\},\] where
$\dot{\alpha}_{-r}(\varphi):=(i(\varphi))^{-r}\in\overline{\mathbb{R}}$
with inverse $\dot{\alpha}_r(\varphi)=(i(\varphi))^r$ (see
\cite{JRJ}), and \[V(x):=\sup(A(x))\] its valuation. 
In the next lemma, we  characterize the elements of $A(x)$.

\begin{lema}\label{norm-1}
Let $x\in\overline{\mathbb{H}}$. Then $r\in A(x)$ if and only if
there exists $p\in\mathbb{N}$, such
that \[\lim\limits_{\varepsilon\downarrow
  0}\varepsilon^{-r}x_n(\varphi_\varepsilon)=0,~\forall
~\varphi\in\mathcal{A}_p(\mathbb{K}),~n=0,1,2,3.\]
This means that $r\in A(x)\Leftrightarrow r\in A(x_n),~n=0,1,2,3$
(see Definition 1.4 in \cite{JRJ}).
\end{lema}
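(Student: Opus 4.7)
The plan is to reduce the quaternionic statement to the scalar statement already proved in \cite{JRJ} by componentwise projection. By the very definition of $A$, one has $r\in A(x)$ if and only if $\dot{\alpha}_{-r}x\approx 0$. Writing $x=x_0+x_1i+x_2j+x_3k$ and using that $\dot{\alpha}_{-r}\in\overline{\mathbb{R}}$ is a central scalar, the product decomposes as
\[
\dot{\alpha}_{-r}x = (\dot{\alpha}_{-r}x_0) + (\dot{\alpha}_{-r}x_1)i + (\dot{\alpha}_{-r}x_2)j + (\dot{\alpha}_{-r}x_3)k.
\]
Applying Lemma \ref{colom-9}(a) to this quaternion yields that $\dot{\alpha}_{-r}x\approx 0$ is equivalent to $\dot{\alpha}_{-r}x_n\approx 0$ for each $n\in\{0,1,2,3\}$, which is exactly $r\in A(x_n)$ for each such $n$. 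This proves the stated equivalence $r\in A(x)\Leftrightarrow r\in A(x_n)$.

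The explicit limit characterization then follows by expanding $r\in A(x_n)$ via Definition 1.4 of \cite{JRJ}, which furnishes $p\in\mathbb{N}$ with
\[
\lim_{\varepsilon\downarrow 0}\varepsilon^{-r}x_n(\varphi_\varepsilon)=0,\qquad \forall\,\varphi\in\mathcal{A}_p(\mathbb{K}),
\]
precisely as asserted. The passage from the abstract condition $\dot{\alpha}_{-r}x_n\approx 0$, which at the representative level reads $(i(\varphi_\varepsilon))^{-r}x_n(\varphi_\varepsilon)\to 0$, to the form $\varepsilon^{-r}x_n(\varphi_\varepsilon)\to 0$ is legitimate because $i(\varphi_\varepsilon)$ equals $\varepsilon$ times a strictly positive constant depending only on $\varphi$, and a nonzero multiplicative constant cannot change whether a limit is zero.

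The main bookkeeping obstacle I anticipate is the uniform choice of $p$: the statement demands a single $p$ working simultaneously for all four components. In the direction from $r\in A(x)$ to the component conditions this is automatic, since the $p$ witnessing $\dot{\alpha}_{-r}x\approx 0$ is the same for all four coordinate projections. In the converse direction, from individual values $p_0,p_1,p_2,p_3$ I would set $p:=\max\{p_0,p_1,p_2,p_3\}$ and invoke the standard nesting $\mathcal{A}_p(\mathbb{K})\subset\mathcal{A}_{p_n}(\mathbb{K})$ for $p\geq p_n$ to obtain a uniform $p$ valid for every component. Beyond this routine quantifier bookkeeping, the proof is entirely formal, with Lemma \ref{colom-9}(a) doing the essential work of reducing quaternionic association to componentwise association.
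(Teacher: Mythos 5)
Your proposal is correct and follows essentially the same route as the paper: reduce $\dot{\alpha}_{-r}x\approx 0$ to the four scalar conditions $\dot{\alpha}_{-r}x_n\approx 0$ via the componentwise decomposition and Lemma~\ref{colom-9}(a), then unfold the definition of $\approx 0$ and absorb the nonzero constant $(i(\varphi))^{-r}$ into the limit to reach $\lim_{\varepsilon\downarrow 0}\varepsilon^{-r}x_n(\varphi_\varepsilon)=0$. Your explicit handling of the uniform choice of $p$ across the four components is a small added precision the paper leaves implicit in its chain of equivalences.
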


\begin{proof}
Let $x=x_0+x_1i+x_2j+x_3k\in\overline{\mathbb{H}}$. Then $r\in
A(x)\Leftrightarrow \dot{\alpha}_{-r}x\approx 0\Leftrightarrow
\dot{\alpha}_{-r}(x_0+x_1i+x_2j+x_3k)\approx 0\Leftrightarrow
\dot{\alpha}_{-r}x_0+\dot{\alpha}_{-r}x_1i+\dot{\alpha}_{-r}x_2j+\dot{\alpha}_{-r}x_3k\approx
0\Leftrightarrow \dot{\alpha}_{-r}x_n\approx
0,~n=0,1,2,3\Leftrightarrow \exists ~p\in\mathbb{N}$, such that
$\lim\limits_{\varepsilon\downarrow
  0}\varepsilon^{-r}(i(\varphi))^{-r}x_n(\varphi_\varepsilon)=0,~n=0,1,2,3,~\forall~\varphi\in\mathcal{A}_p(\mathbb{K})\Leftrightarrow
(i(\varphi))^{-r}\lim\limits_{\varepsilon\downarrow
  0}\varepsilon^{-r}x_n(\varphi_\varepsilon)=0,~n=0,1,2,3,~\forall~\varphi\in\mathcal{A}_p(\mathbb{K})\Leftrightarrow
\lim\limits_{\varepsilon\downarrow
  0}\varepsilon^{-r}x_n(\varphi_\varepsilon)=0,~n=0,1,2,3,~\forall~\varphi\in\mathcal{A}_p(\mathbb{K})$,
once that $(i(\varphi))^{-r}\ne 0,
~\forall~\varphi\in\mathcal{A}_p(\mathbb{K})$. From Definition 1.4 of
\cite{JRJ} it is equivalent to $r\in A(x)\Leftrightarrow r\in A(x_n),~\forall~n=0,1,2,3$.
\end{proof}

It is easily seen that either $A(x)=\mathbb{R}$ or there exists
$r\in\mathbb{R}$, such that either $A(x)=]-\infty,r[$ or
$A(x)\subseteq ]-\infty,r]$. We define $\Vert x\Vert=e^{-V(x)}$ and
$d(x,y):=\Vert x-y\Vert$. Note  that all the definitions above
make sense and that $d$ defines an ultra metric on
$\overline{\mathbb{H}}$. We denote $d_{\pi}$ as the product metric on
$\overline{\mathbb{H}}$ induced by the topology of
$\overline{\mathbb{K}}$ defined in \cite{JRJ}.

\begin{teo}\label{norm-2}
$(\overline{\mathbb{H}},d)$ and $(\overline{\mathbb{H}},d_\pi)$
are homeomorphic as topological spaces, i.e., $d$ and $d_\pi$ defines
the same topology on $\overline{\mathbb{H}}$.
\end{teo}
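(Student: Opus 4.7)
The plan is to reduce the equality of the two topologies to a componentwise identity for the valuation, and then observe that the resulting norm on $\overline{\mathbb{H}}$ is (up to equivalence) the standard product norm on $\overline{\mathbb{R}}^4$.

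First I would fix $x=x_0+x_1i+x_2j+x_3k\in\overline{\mathbb{H}}$ and apply Lemma \ref{norm-1}, which gives $r\in A(x)\Leftrightarrow r\in A(x_n)$ for every $n=0,1,2,3$. In other words, $A(x)=\bigcap_{n=0}^{3}A(x_n)$. Using the structural remark made just before Theorem \ref{norm-2} --- that each $A(x_n)$ is either all of $\mathbb{R}$ or a left-infinite half-line of the form $\,]-\infty,r_n[\,$ or $\,]-\infty,r_n]\,$ --- the intersection is governed by the least such right endpoint, so $V(x)=\min_{n}V(x_n)$ (with the convention $V(x_n)=+\infty$ when $A(x_n)=\mathbb{R}$, i.e.\ $x_n=0$). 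Exponentiating and applying this to $x-y$ in place of $x$ yields
\[
d(x,y)=\|x-y\|=e^{-V(x-y)}=\max_{0\le n\le 3}e^{-V(x_n-y_n)}=\max_{0\le n\le 3}\|x_n-y_n\|_{\overline{\mathbb{R}}}.
\]

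Next I would recall that $d_\pi$ is, by definition, the product metric on $\overline{\mathbb{H}}\cong\overline{\mathbb{R}}^4$ induced by the sharp ultrametric on $\overline{\mathbb{K}}=\overline{\mathbb{R}}$ constructed in \cite{JRJ}. The maximum of the component norms is a canonical representative of the product metric, and is bi-Lipschitz equivalent to any of the usual alternatives (sum or Euclidean combinations of component distances); all of these induce the product topology. Combining this with the displayed identity for $d$ shows that $d$ and $d_\pi$ generate the same family of open sets, so the identity map is a homeomorphism $(\overline{\mathbb{H}},d)\to(\overline{\mathbb{H}},d_\pi)$.

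The only point that needs a little care is the bookkeeping in $V(x)=\min_n V(x_n)$ when one or more $A(x_n)$ equals $\mathbb{R}$: such a coordinate corresponds to $x_n=0$ and contributes $0$ to the maximum of the component norms, which is exactly what $+\infty$ should do inside a $\min$ after exponentiation. Once this edge case and the convention $\|0\|=e^{-\infty}=0$ are spelled out, no further analytic ingredients are needed; the proof is essentially a translation of Lemma \ref{norm-1} into multiplicative ultrametric form.
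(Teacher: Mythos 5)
Your proof is correct and follows the same strategy as the paper's: both reduce the claim to Lemma \ref{norm-1}'s componentwise identification of $A(x)$ with $\bigcap_n A(x_n)$, and both conclude that a $d$-ball of radius $r$ about $x$ equals the set where $\|x_n-y_n\|<r$ for every $n$, which is a $d_\pi$-ball. Your version makes the intermediate identity $\|x-y\|=\max_n\|x_n-y_n\|$ explicit and flags the (otherwise tacit) point that $d_\pi$ may be taken to be the max metric up to topological equivalence, but these are presentational improvements to the same argument rather than a genuinely different route.
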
 

\begin{proof}
Let $x=(x_0+x_1i+x_2j+x_3k)\in(\overline{\mathbb{H}},d)$ and $r>0$ and
let $$B_r(x)=\{y\in\overline{\mathbb{H}}|d(x,y)<r\}.$$ If
$y=(y_0+y_1i+y_2j+y_3k)\in B_r(x)$, then 
\begin{eqnarray*}
d(x,y)<r&\Leftrightarrow&\Vert x-y\Vert<r\\
&\Leftrightarrow&e^{-V(x-y)}<r\\
&\Leftrightarrow&e^{V(x-y)}>\frac{1}{r}.
\end{eqnarray*}
Equivalently, we have that
\[V(x-y)>\ln\left(\frac{1}{r}\right)\Leftrightarrow
\ln\left(\frac{1}{r}\right)\in A(x-y).\]
Now, by the Lemma \ref{norm-1}, we obtain,
for all $n=0,1,2,3$, that
\begin{eqnarray*}
\ln\left(\frac{1}{r}\right)\in A(x_n-y_n)
&\Leftrightarrow&V(x_n-y_n)>\ln\left(\frac{1}{r}\right),~n=0,1,2,3\\
&\Leftrightarrow&e^{V(x_n-y_n)}>\frac{1}{r},~n=0,1,2,3\\
&\Leftrightarrow&e^{-V(x_n-y_n)}<r,~n=0,1,2,3\\
&\Leftrightarrow&\Vert x_n-y_n\Vert<r,~n=0,1,2,3\\
&\Leftrightarrow&d_{\pi}(x,y)<r.
\end{eqnarray*}
Therefore, it follows the assertion.
\end{proof}

\begin{cor}\label{colom-10}
$(\overline{\mathbb{H}},d)$ is a complete metric algebra.
\end{cor}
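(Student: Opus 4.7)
The plan is to reduce everything to the already-known completeness and algebra-continuity properties of $\overline{\mathbb{R}}$ with its ultrametric, transported through the product structure $\overline{\mathbb{H}} \cong \overline{\mathbb{R}}^4$. By Theorem~\ref{norm-2}, the metrics $d$ and $d_\pi$ induce the same topology on $\overline{\mathbb{H}}$; in fact the proof of Theorem~\ref{norm-2} shows that open balls of equal radius coincide, so $d$ and $d_\pi$ have identical Cauchy sequences. This lets me reason entirely with $d_\pi$ and then transfer the conclusion back to $d$.

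First, I would prove completeness. Let $(x^{(m)})_{m\in\mathbb{N}}$ be a Cauchy sequence in $(\overline{\mathbb{H}},d)$, and write $x^{(m)} = x_0^{(m)} + x_1^{(m)}i + x_2^{(m)}j + x_3^{(m)}k$. Since $d$ and $d_\pi$ agree on Cauchy sequences, for each fixed $n \in \{0,1,2,3\}$ the sequence $(x_n^{(m)})_{m\in\mathbb{N}}$ is Cauchy in $(\overline{\mathbb{R}},\|\cdot\|)$. Completeness of $\overline{\mathbb{R}}$ (established in \cite{JRJ}) gives a limit $x_n \in \overline{\mathbb{R}}$ for each $n$. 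Setting $x := x_0 + x_1 i + x_2 j + x_3 k \in \overline{\mathbb{H}}$ and passing back through $d_\pi = \max_n \|x_n^{(m)} - x_n\|$, we obtain $x^{(m)} \to x$ in the $d_\pi$-topology, hence in the $d$-topology.

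Second, I would verify the algebra structure, i.e.\ that addition, scalar multiplication (by elements of $\overline{\mathbb{R}}$) and quaternion multiplication are continuous with respect to $d$. For addition this is immediate from the ultrametric estimate componentwise. For multiplication, writing the product $xy$ of two quaternions as a sum of products of the real components $x_n y_m$, continuity follows from the fact that multiplication in $\overline{\mathbb{R}}$ is continuous (the standard estimate $\|ab - a'b'\| \le \max\{\|a\|\|b-b'\|, \|b'\|\|a-a'\|\}$ holds in every ultrametric normed ring and in particular in $\overline{\mathbb{R}}$). Combining this with Theorem~\ref{norm-2}, these maps are then continuous for $d$ as well.

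The only potentially delicate point is confirming the ultrametric-type submultiplicativity used to deduce continuity of the quaternion product, so I would spell it out via $\|xy\| \le \max_{n,m}\|x_n\|\|y_m\|$, which is an immediate consequence of the ultrametric norm property $\|a+b\| \le \max\{\|a\|,\|b\|\}$ applied to the four-term expansion of $xy$; once this is in place, the continuity estimate $d(xy, x'y') \le \max\{d(x,x')\cdot\|y\|, \|x'\|\cdot d(y,y')\}$ follows routinely and, combined with the completeness argument, yields the corollary.
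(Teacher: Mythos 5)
Your proof is correct and follows the same route as the paper's: the paper simply invokes Theorem~\ref{norm-2} together with the completeness of $\overline{\mathbb{K}}$ as a topological ring (Proposition~1.7 of \cite{JRJ}), and you have spelled out exactly the details this invocation compresses — that the equivalences in Theorem~\ref{norm-2} actually identify open balls of the same radius (so $d$ and $d_\pi$ share Cauchy sequences), that completeness then reduces to the componentwise completeness of $\overline{\mathbb{R}}$, and that continuity of the algebra operations follows from the ultrametric estimates in $\overline{\mathbb{R}}$. No substantive divergence from the paper's argument.
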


\begin{proof}
The proof follows from Theorem \ref{norm-2} and the Proposition 1.7 of
\cite{JRJ} which assures that $\overline{\mathbb{K}}$ is a complete
topological ring.
\end{proof}

The following result shows that the unit group of
$\overline{\mathbb{H}}$ is very ``big''.

\begin{teo}\label{colom-11}
The unit group $\inv(\overline{\mathbb{H}})$ is an open and dense subset
of $\overline{\mathbb{H}}$.
\end{teo}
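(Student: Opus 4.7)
The plan is to prove openness and density separately, in each case reducing to the corresponding property of $\inv(\overline{\mathbb{R}})$ provided by Theorem \ref{colom-1} together with the criterion in Lemma \ref{unit}. By Theorem \ref{norm-2} I may use either the metric $d$ or the product metric $d_{\pi}$; $d_{\pi}$ is the more convenient one because it decouples the four real coordinates.

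For openness, I would use the characterization from Lemma \ref{unit}(a): $x\in\inv(\overline{\mathbb{H}})$ iff $n(x)\in\inv(\overline{\mathbb{R}})$, equivalently iff
\[
N(x):=x_0^2+x_1^2+x_2^2+x_3^2=n(x)^2\in\inv(\overline{\mathbb{R}}),
\]
since units in $\overline{\mathbb{R}}$ are closed under squaring and square roots (the latter using the existence of square roots of positive elements recalled at the end of Section \ref{sec-2}). The map $N:\overline{\mathbb{H}}\to\overline{\mathbb{R}}$ is a polynomial in the four coordinates, hence continuous with respect to $d_{\pi}$ because $\overline{\mathbb{R}}$ is a topological ring by the results cited from \cite{JRJ}. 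Thus $\inv(\overline{\mathbb{H}})=N^{-1}(\inv(\overline{\mathbb{R}}))$ is the preimage of an open set (Theorem \ref{colom-1}(ii)) under a continuous map, hence open.

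For density, I would use Lemma \ref{unit}(c) directly. Given $x=x_0+x_1i+x_2j+x_3k\in\overline{\mathbb{H}}$ and $\delta>0$, the density of $\inv(\overline{\mathbb{R}})$ in $\overline{\mathbb{R}}$ (Theorem \ref{colom-1}(ii)) produces a unit $y_0\in\inv(\overline{\mathbb{R}})$ with $\Vert y_0-x_0\Vert<\delta$. Set $y:=y_0+x_1i+x_2j+x_3k$. By Lemma \ref{unit}(c), $y\in\inv(\overline{\mathbb{H}})$; and because the remaining coordinates agree with those of $x$, the product metric gives $d_{\pi}(x,y)=\Vert y_0-x_0\Vert<\delta$. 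By Theorem \ref{norm-2} this also controls $d$, and density follows.

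I do not anticipate a serious obstacle. The only subtlety is justifying that ``$n(x)$ is a unit'' is equivalent to ``$N(x)=n(x)^2$ is a unit'' so that I may argue through the polynomial $N$ rather than through the square root; this is needed because the square-root map on $\overline{\mathbb{R}}_+$ is not as transparently continuous as a polynomial. Once that equivalence is in hand via the group structure of $\inv(\overline{\mathbb{R}})$ and the existence of square roots of non-negative elements of $\overline{\mathbb{R}}$, both halves of the statement are immediate from the analogous facts for $\overline{\mathbb{R}}$.
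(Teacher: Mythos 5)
Your proof is correct but takes a genuinely different route from the paper's in both halves. For openness, the paper argues by contradiction via a sequence $x_n\to x$ with $n(x_n)\notin\inv(\overline{\mathbb{R}})$ and appeals to continuity of $n=\sqrt{\sum_{i}x_i^2}$; your direct formulation $\inv(\overline{\mathbb{H}})=N^{-1}(\inv(\overline{\mathbb{R}}))$ with the polynomial $N=n^2$ is cleaner because it sidesteps any question about continuity of the square root, and the reduction you flag ($n(x)$ a unit iff $N(x)=n(x)^2$ a unit) is immediate: $N(x)=n(x)\cdot n(x)$, and in a commutative ring a factor of a unit is a unit, so you do not even need existence of square roots of positives. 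For density, your argument via Lemma~\ref{unit}(c) --- perturb only the real coordinate to a nearby unit of $\overline{\mathbb{R}}$, leaving the imaginary part untouched --- is both simpler and more solid than the paper's. The paper assumes $B_r(z)\cap\inv(\overline{\mathbb{H}})=\emptyset$ and asserts that continuity of $n$ yields a ball $B_s(n(z))$ with $n^{-1}(B_s(n(z)))\subset B_r(z)$, against density of $\inv(\overline{\mathbb{R}})$; but that inclusion runs the wrong way for continuity (continuity places a ball around $z$ \emph{inside} the preimage of $B_s(n(z))$, not the preimage inside $B_r(z)$), so the paper's density step is at best heavily compressed and would need something like the continuous section $t\mapsto t\,\Theta_{\hat{z}}$ of $n$ through $z$, coming from the polar decomposition, to be made rigorous. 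Your Lemma~\ref{unit}(c) route avoids the issue entirely. Both halves of your argument rest on the same ingredients the paper invokes --- Lemma~\ref{unit}, Theorem~\ref{norm-2}, and the Fundamental Theorem --- but you assemble them more economically.
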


\begin{proof}
Let $x=a_0+a_1i+a_2j+a_3k\in \inv(\overline{\mathbb{H}})$. Then
$n(x)=\sqrt{x\bar{x}}\in \inv(\overline{\mathbb{R}})$. Thus, by the
Fundamental Theorem, there exists $r>0$, such that
$B_r(n(x))\subseteq \inv(\overline{\mathbb{R}})$. Now, let us suppose
that $ \inv(\overline{\mathbb{H}})$ is not an open set. Then, still
by the same theorem, we have that for all $n\in\mathbb{N}^*$, there
exists a sequence $x_n\in B_{\frac{1}{n}}(x)$, such that
$n(x_n)\notin \inv(\overline{\mathbb{R}})$ and
$x_n\underset{n\to\infty}{\to} x$. Hence, by Theorem \ref{norm-2}, we
have that $(x_n)_i\underset{n\to\infty}{\to} x_i,~i=0,1,2,3$ and it 
follows
that \[n(x_n)=\sqrt{\sum_{i=0}^3[(x_n)_i]^2}\underset{n\to\infty}{\to}\sqrt{\sum_{i=0}^3x_i^2}=n(x),\]
which contradicts the fact that $n(x)\in \inv(\overline{\mathbb{R}})$.

We now prove the density. Suppose that there exists $r\in\mathbb{R}$
and $z\in\overline{\mathbb{H}}$, such that
$B_r(z)\cap \inv(\overline{\mathbb{H}})=\emptyset$. If
$x=n(z)\in\overline{\mathbb{R}}$, and since the norm $n$ is
obviously a continuous function, then  we have that there exists an open
ball $B_s(x)\subset\overline{\mathbb{R}}$, such that
$n^{-1}(B_s(y))\subset B_r(z)$ which  according to Lemma \ref{unit},
contradicts the density of $ \inv(\overline{\mathbb{R}})$ in
$\overline{\mathbb{R}}$ (see \cite{JRJ}).   
\end{proof}

\begin{cor}\label{colom-12}
Let $\mathfrak{M}$ be an ideal of $\overline{\mathbb{H}}$. If
$\mathfrak{M}$ is maximal ideal of $\overline{\mathbb{H}}$, 
then $\mathfrak{M}=\overline{\mathfrak{M}}$.
\end{cor}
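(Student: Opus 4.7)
The plan is to use the standard topological-ring argument: the closure of any proper ideal is still a proper ideal, so a maximal ideal must coincide with its closure. The two ingredients we need are (i) closures of ideals are ideals in $\overline{\mathbb{H}}$, and (ii) the complement of $\operatorname{Inv}(\overline{\mathbb{H}})$ is closed, which is exactly what Theorem \ref{colom-11} provides.

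First I would verify that $\overline{\mathfrak{M}}$ is an ideal of $\overline{\mathbb{H}}$. By Corollary \ref{colom-10}, $(\overline{\mathbb{H}},d)$ is a topological algebra, so addition, negation, and left and right multiplication by any fixed element are continuous. Taking nets (or sequences, since $d$ is metrizable) in $\mathfrak{M}$ and passing to the limit shows that $\overline{\mathfrak{M}}$ is closed under these operations, hence is a (two-sided) ideal. Clearly $\mathfrak{M}\subseteq\overline{\mathfrak{M}}$.

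Next I would show $\overline{\mathfrak{M}}$ is a \emph{proper} ideal. Since $\mathfrak{M}$ is maximal, it is proper, so $\mathfrak{M}\cap \operatorname{Inv}(\overline{\mathbb{H}})=\emptyset$ (any ideal meeting the unit group is the whole ring). Thus $\mathfrak{M}\subseteq \overline{\mathbb{H}}\setminus \operatorname{Inv}(\overline{\mathbb{H}})$. By Theorem \ref{colom-11}, $\operatorname{Inv}(\overline{\mathbb{H}})$ is open, so its complement is closed. Taking closures gives $\overline{\mathfrak{M}}\subseteq \overline{\mathbb{H}}\setminus \operatorname{Inv}(\overline{\mathbb{H}})$, so in particular $1\notin\overline{\mathfrak{M}}$ and $\overline{\mathfrak{M}}\neq\overline{\mathbb{H}}$.

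Finally, since $\overline{\mathfrak{M}}$ is a proper ideal containing the maximal ideal $\mathfrak{M}$, maximality forces $\mathfrak{M}=\overline{\mathfrak{M}}$. The only subtle step is the stability of $\overline{\mathfrak{M}}$ under left and right multiplication; this is routine once one observes that the maps $y\mapsto ay$ and $y\mapsto ya$ are continuous on $(\overline{\mathbb{H}},d)$, which follows from Corollary \ref{colom-10}. I do not anticipate any real obstacle here, since everything reduces to the two facts already established: continuity of the algebra operations and openness of the unit group.
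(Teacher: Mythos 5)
Your argument is correct and is exactly the standard route the paper intends (the paper states the corollary without proof immediately after Theorem \ref{colom-11}): closure of a proper ideal in a topological algebra is again an ideal, and properness is preserved because the unit group is open so its complement is closed. Nothing to add.
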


According to \cite{SSJ}, each element $y$ in the real quaternion number
$\mathbb{H}$ has the polar decomposition $y=n(y)e^{ar}$, where $r$ is
the norm of the imaginary part of $y$ and $a$ is the angle
$-\pi\leq a\leq \pi$ where we say that $a$ is the argument of $y$ 
and we denote it by $\arg(y)$. Remember that given a $y\in\mathbb{H}$,
we have that $\re(y)$ is the real part of $y$ and $\Im m(y)$ is
imaginary part of $y$.

Now, for an element $y\in\overline{\mathbb{H}}$ we define the map
$\theta_{\hat{y}}:\mathcal{A}_0(\mathbb{K})\to\mathbb{H}$ by 
$$\theta_{\hat{y}}(\varphi)=\exp(r.\arg(\hat{y}(\varphi))$$ and its
inverse the map $\theta_{\hat{y}}^{-1}:\mathcal{A}_0(\mathbb{K})\to\mathbb{H}$ by
$$\theta^{-1}_{\hat{y}}(\varphi)=\exp(-r\arg(\hat{y}(\varphi)),$$ 
where $-\pi\leq \arg(\hat{y}(\varphi))\leq \pi,
~r=n(\Im m(\hat{y}(\varphi)))$ and $\hat{y}:A_0(\mathbb{K})\rightarrow
\mathbb{H}$ is a representative of $y$. Thus, for each 
$\varphi\in \mathcal{A}_0(\mathbb{K})$, we have that
$\theta_{\hat{y}}(\varphi).n(\hat{y}(\varphi))=\hat{y}(\varphi)$. Therefore,
\begin{equation}\label{polar}
\theta_{\hat{y}}.n(\hat{y})=\hat{y},
\end{equation}
and we have that $\forall ~y\in\overline{\mathbb{H}}$, there exists a (and
therefore all) representative(s) $\hat{y}$ of
$y$ such that holds the equation (\ref{polar}).   
Since $\theta_{\hat{y}}$ and $\theta^{-1}_{\hat{y}}$ are inverses of each other and  
we denote their classes by $\Theta_{\hat{y}}$ and
$\Theta^{-1}_{\hat{y}}$, respectively. 

In Proposition \ref{colom-5} we have the convexity of ideals in
$\overline{\mathbb{R}}$ and in the next result we have the convexity 
of ideals in $\overline{\mathbb{H}}$ 
which completes (\cite{CFJ}, Lemma 3.7).

\begin{prop} \label{colom-13}
Let $\mathfrak{J}$ be an ideal of $\overline{\mathbb{H}}$. 
The following statements hold.
\begin{enumerate}
\item[$a)$] $x\in\mathfrak{J}$ if and only if $n(x)\in \mathfrak{J}$.
\item[$b)$] If $x\in\mathfrak{J}$ and $n(y)\leq n(x)$, then $y\in \mathfrak{J}$.
\end{enumerate}
\end{prop}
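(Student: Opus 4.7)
The plan is to reduce both parts to the convexity of ideals in $\overline{\mathbb{R}}$ (Proposition \ref{colom-5}) by exploiting the polar decomposition $x = \Theta_{\hat{x}} \cdot n(x)$ established in equation (\ref{polar}), together with the fact that $\Theta_{\hat{x}}$ is a unit of $\overline{\mathbb{H}}$ with inverse $\Theta^{-1}_{\hat{x}}$.

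For part $(a)$, the polar identity settles both directions in essentially one line each. If $x \in \mathfrak{J}$, then $n(x) = \Theta^{-1}_{\hat{x}} \cdot x$ lies in $\mathfrak{J}$ because $\mathfrak{J}$ absorbs multiplication by $\overline{\mathbb{H}}$ on either side. Conversely, if $n(x) \in \mathfrak{J}$, then $x = \Theta_{\hat{x}} \cdot n(x) \in \mathfrak{J}$ by the same absorption property.

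For part $(b)$, I first invoke $(a)$ to obtain $n(x) \in \mathfrak{J}$. Since $\overline{\mathbb{R}}$ embeds in the center of $\overline{\mathbb{H}}$, the set $\mathfrak{J}_{\mathbb{R}} := \mathfrak{J} \cap \overline{\mathbb{R}}$ is an ideal of $\overline{\mathbb{R}}$ and contains $n(x)$. The element $n(y)$ is $q$-positive (it is a square root of $y_0^2+y_1^2+y_2^2+y_3^2 \geq 0$, using the square-root statement recalled at the end of Section \ref{sec-2}), and by hypothesis $0 \leq n(y) \leq n(x)$. Proposition \ref{colom-5}$(3)$ then forces $n(y) \in \mathfrak{J}_{\mathbb{R}} \subseteq \mathfrak{J}$, and a second application of $(a)$ gives $y \in \mathfrak{J}$.

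The only delicate point is verifying that $\Theta_{\hat{x}}$ is genuinely a unit of $\overline{\mathbb{H}}$, i.e.\ that $\theta_{\hat{x}}$ and $\theta^{-1}_{\hat{x}}$ are moderate maps and that their classes are well-defined and mutually inverse; this is precisely what the discussion preceding the statement ensures, since the quaternion exponentials involved take values in the unit sphere of $\mathbb{H}$ and so are trivially moderate, and the pointwise identity $\theta_{\hat{x}} \cdot \theta^{-1}_{\hat{x}} = 1$ descends to $\overline{\mathbb{H}}$. Once this is granted, the proof reduces to the short manipulation above plus the appeal to Proposition \ref{colom-5}.
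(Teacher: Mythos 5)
Your proof is correct and follows essentially the same route as the paper: part $(a)$ is the same one-line manipulation with the polar decomposition $x = n(x)\Theta_{\hat{x}}$ and the fact that $\Theta_{\hat{x}}$ is a unit, and for part $(b)$ the paper simply cites \cite{JRJ}, Proposition 4.6 (the source of Proposition \ref{colom-5} here), which is precisely the convexity-of-ideals result you invoke. Your write-up of $(b)$ is just a more explicit version of what the paper leaves as a reference: reduce to $n(x), n(y) \in \overline{\mathbb{R}}$ via $(a)$, observe that $\mathfrak{J}\cap\overline{\mathbb{R}}$ is an ideal of $\overline{\mathbb{R}}$ (since $\overline{\mathbb{R}}$ is central in $\overline{\mathbb{H}}$), apply Proposition \ref{colom-5}$(3)$ with $0 \le n(y) \le n(x)$, and lift back with $(a)$.
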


\begin{proof} $a)$: Suppose that $x\in \mathfrak{J}$. Since,
  $x=n(x)\Theta_{\hat{x}}$ then we have that   
$n(x)=x\Theta^{-1}_{\hat{x}}\in \mathfrak{J}$.
Conversely, suppose that  $n(x)\in \mathfrak{J}$ and we have that 
$x=n(x)\Theta^{-1}_{\hat{x}}\in \mathfrak{J}$. So, $x\in \mathfrak{J}$

$b)$: It follows from the similar methods of the proof 
(\cite{JRJ}, Proposition 4.6).
\end{proof}

\section{The algebraic structure of $\overline{\mathbb{H}}$}\label{sec-4}
In this section, unless otherwise stated, $\mathbb{K}=\mathbb{R}$. 
We start proving that Boolen algebra of $\overline{\mathbb{H}}$ 
equals that of $\overline{\mathbb{R}}$.

\begin{teo}\label{colom-14}
$\mathcal{B}(\overline{\mathbb{H}})=\mathcal{B}(\overline{\mathbb{R}})$.
\end{teo}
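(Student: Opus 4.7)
The direction $\mathcal{B}(\overline{\mathbb{R}}) \subseteq \mathcal{B}(\overline{\mathbb{H}})$ is automatic from the canonical embedding $\overline{\mathbb{R}} \hookrightarrow \overline{\mathbb{H}}$, so the plan is to prove the reverse inclusion: every idempotent $e \in \overline{\mathbb{H}}$ already lies in $\overline{\mathbb{R}}$. The bridge is the norm squared $n(e)^2 = e\bar{e}$, which by construction lives in the center $\overline{\mathbb{R}}$.

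First I would verify that $f := e \bar{e}$ is itself an idempotent of $\overline{\mathbb{R}}$. Because quaternion conjugation is an anti-involution, $e^2 = e$ forces $\bar{e}^2 = \bar{e}$; and since $f \in \overline{\mathbb{R}}$ is central, $\bar{e} e = e \bar{e} = f$ as well. These two facts combine to give $f^2 = e(\bar{e} e) \bar{e} = e^2 \bar{e}^2 = e \bar{e} = f$. A similar one-line computation shows $ef = e^2 \bar{e} = f$.

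Next I would introduce the complementary element $e' := (1 - f) e$. Since $1 - f \in \overline{\mathbb{R}}$ is a central idempotent, $(e')^2 = (1-f)^2 e^2 = (1-f) e = e'$, so $e'$ is idempotent. The decisive calculation is
\[
n(e')^2 \;=\; e'\,\overline{e'} \;=\; (1-f)\,e\bar{e}\,(1-f) \;=\; (1-f)^2 f \;=\; (1-f)f \;=\; 0.
\]
Writing $e' = e'_0 + e'_1 i + e'_2 j + e'_3 k$, the left side equals $(e'_0)^2 + (e'_1)^2 + (e'_2)^2 + (e'_3)^2$, a sum of non-negative elements of $\overline{\mathbb{R}}$ equal to zero. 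By the order structure reviewed in Section \ref{sec-2} (a sum of non-negatives vanishes only when each summand does), each $(e'_j)^2 = 0$.

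The main obstacle, and the only ingredient not explicitly recorded earlier, is that $\overline{\mathbb{R}}$ is reduced: $x^2 = 0 \Rightarrow x = 0$. I would establish this directly from the null condition in Definition \ref{colom-6}: for a representative $\hat{x}$ with $\hat{x}^2 \in \mathcal{N}(\mathbb{R})$, the estimate $|\hat{x}(\varphi_\varepsilon)|^2 \leq C \varepsilon^{\gamma(q) - p}$ yields $|\hat{x}(\varphi_\varepsilon)| \leq \sqrt{C}\,\varepsilon^{(\gamma(q)-p)/2}$, and the reindexed sequence $\gamma'(q) := (\gamma(q) + p)/2$ still belongs to $\Gamma$, so $\hat{x} \in \mathcal{N}(\mathbb{R})$. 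With reducedness in hand, $e'_j = 0$ for every $j$, hence $e' = 0$, and therefore $e = f e = f \in \mathcal{B}(\overline{\mathbb{R}})$, which closes the argument.
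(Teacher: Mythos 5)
Your proof is correct and takes a genuinely different route from the paper's. The paper reduces the whole argument to the external characterization $\mathcal{B}(\overline{\mathbb{R}})=\{\mathcal{X}_A\mid A\in\mathcal{S}_f\}$ (Theorem~4.14 of \cite{JRJ}): it writes $n(e)=\mathcal{X}_A$, $n(1-e)=\mathcal{X}_B$, shows $\mathcal{X}_A\mathcal{X}_B=0$, and extracts $e=\mathcal{X}_A$ from the identity $e\mathcal{X}_A+(1-e)\mathcal{X}_B=1$ by multiplying through by $\mathcal{X}_A$. You instead bypass the characteristic-function description entirely: you build the central idempotent $f=e\bar e=n(e)^2$, show the complementary piece $e'=(1-f)e$ has $n(e')^2=0$, and kill it using the order structure together with the reducedness of $\overline{\mathbb{R}}$, yielding $e=fe=f$. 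This is more self-contained and, notably, more honest about the one nontrivial analytic ingredient: the paper silently passes from $n(e\mathcal{X}_{A^c})=0$ to $e\mathcal{X}_{A^c}=0$, which is exactly the ``$n(z)=0\Rightarrow z=0$'' fact equivalent to reducedness; you prove reducedness explicitly by halving the null-estimate exponent and checking that $\gamma'(q)=(\gamma(q)+p)/2$ still lies in $\Gamma$. The only step you state somewhat loosely is that a vanishing sum of $q$-non-negative squares forces each term to vanish; this does follow from Lemma~\ref{order} and Proposition~\ref{colom-5} (each term is sandwiched between $0$ and a null element), but a one-line justification would make the argument airtight. Net effect: your proof trades the reliance on Theorem~4.14 of \cite{JRJ} for an elementary use of the ultrametric/order machinery already reviewed in Section~\ref{sec-2}, and it fills in a gap the paper leaves implicit.
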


\begin{proof}
To prove this, it is necessary to prove the following two inclusions:
\begin{enumerate}
\item[$i)$]
  $\mathcal{B}(\overline{\mathbb{H}})\subseteq\mathcal{B}(\overline{\mathbb{R}})$;
\item[$ii)$] $\mathcal{B}(\overline{\mathbb{R}})\subseteq\mathcal{B}(\overline{\mathbb{H}})$.
\end{enumerate}
Let us prove $i)$: Let $e\in\mathcal{B}(\overline{\mathbb{H}})$ be a
non-trivial idempotent. Then $n(e),n(1-e)\in\overline{\mathbb{R}}$
are idempotents, i.e.,
$n(e),n(1-e)\in\mathcal{B}(\overline{\mathbb{R}})$. By the Theorem
4.14 of \cite{JRJ}, there exists $A,B\in\mathcal{S}_f$, such that
$n(e)=\mathcal{X}_A$ and $n(1-e)=\mathcal{X}_B$. Thus, we have that
that $$n(e)\mathcal{X}_{A^c}=\mathcal{X}_A\mathcal{X}_{A^c}=0,$$ but
$n(e)\mathcal{X}_{A^c}=n(e\mathcal{X}_{A^c})$ which implies that 
$n(e\mathcal{X}_{A^c})=0$ Hence,   $e=e\mathcal{X}_A$. 
On the other hand, since  $e(1-e)=0$, we have that
$\mathcal{X}_A\mathcal{X}_B=0$. By the fact that   $e\mathcal{X}_A=e$ and
$(1-e)\mathcal{X}_B=1-e$, we have  that
$e\mathcal{X}_A+(1-e)\mathcal{X}_B=1$. Consequently, 
$e\mathcal{X}_A=\mathcal{X}_A$ and we get that  $e=\mathcal{X}_A$. Hence,
$e\in\mathcal{B}(\overline{\mathbb{R}})$ and it follows $i)$. 
The affirmation in $ii)$ is clearly true. From $i)$ and $ii)$  the result follows.
\end{proof}

The following remark will be useful in this paper.

\begin{obs}\label{ideal-2}
Note that
$\overline{\mathbb{H}}/\overline{\mathbb{H}}(g_f(\mathcal{F}))$ is
isomorphic to $\overline{\mathbb{H}}\left(\overline{\mathbb{R}}/g_f(\mathcal{F})\right)$.
\end{obs}

In the next result, we shall use the Fundametal Theorem of
$\overline{\mathbb{K}}$, i.e, Theorem \ref{colom-1}, to
give a complete description of the maximal ideals of
$\overline{\mathbb{H}}$.

\begin{teo}\label{ideal-1}
Let $\mathfrak{M}$ be a maximal ideal of
$\overline{\mathbb{H}}$. Then there exists
$\mathcal{F}\in\mathcal{P}_*(S_f)$, such that
$\mathfrak{M}=\overline{\mathbb{H}}\left(\overline{g_f(\mathcal{F})}\right)$.
\end{teo}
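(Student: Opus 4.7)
The plan is to reduce the problem to the analogous classification of maximal ideals of $\overline{\mathbb{R}}$ given by Theorem \ref{colom-3}(b), by passing through the intersection $\mathfrak{I}:=\mathfrak{M}\cap\overline{\mathbb{R}}$ (which lies in the center of $\overline{\mathbb{H}}$). First I would observe that $\mathfrak{I}$ is an ideal of $\overline{\mathbb{R}}$, and that the ideal $\overline{\mathbb{H}}(\mathfrak{I})$ it generates inside $\overline{\mathbb{H}}$ coincides with $\{a_0+a_1i+a_2j+a_3k:a_n\in\mathfrak{I}\}$. The key claim is that
\[
\mathfrak{M}=\overline{\mathbb{H}}(\mathfrak{I}).
\]
The inclusion $\overline{\mathbb{H}}(\mathfrak{I})\subseteq\mathfrak{M}$ is immediate from $\mathfrak{I}\subseteq\mathfrak{M}$ and the fact that $\mathfrak{M}$ is an ideal of $\overline{\mathbb{H}}$.

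For the reverse inclusion I would invoke the convexity result just proved (Proposition \ref{colom-13}) together with the polar decomposition (\ref{polar}). Given $x\in\mathfrak{M}$, Proposition \ref{colom-13}(a) gives $n(x)\in\mathfrak{M}$, and since $n(x)\in\overline{\mathbb{R}}$, we get $n(x)\in\mathfrak{I}$. Writing $x=n(x)\,\Theta^{-1}_{\hat{x}}$ and expanding $\Theta^{-1}_{\hat{x}}=b_0+b_1i+b_2j+b_3k\in\overline{\mathbb{H}}$ shows $x=n(x)b_0+n(x)b_1i+n(x)b_2j+n(x)b_3k\in\overline{\mathbb{H}}(\mathfrak{I})$, since each coefficient $n(x)b_n$ belongs to $\mathfrak{I}$.

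Next I would show that $\mathfrak{I}$ is itself a maximal ideal of $\overline{\mathbb{R}}$. Suppose for contradiction that $\mathfrak{I}\subsetneq\mathfrak{I}'$ for some proper ideal $\mathfrak{I}'$ of $\overline{\mathbb{R}}$. Then $\overline{\mathbb{H}}(\mathfrak{I}')$ is a proper ideal of $\overline{\mathbb{H}}$ (if $1\in\overline{\mathbb{H}}(\mathfrak{I}')$, comparing the coefficient of $1$ yields $1\in\mathfrak{I}'$, contradicting properness), and it strictly contains $\overline{\mathbb{H}}(\mathfrak{I})=\mathfrak{M}$, contradicting the maximality of $\mathfrak{M}$. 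Hence $\mathfrak{I}$ is maximal in $\overline{\mathbb{R}}$, and Theorem \ref{colom-3}(b) supplies $\mathcal{F}\in\mathcal{P}_*(\mathcal{S}_f)$ with $\mathfrak{I}=\overline{g_f(\mathcal{F})}$. Plugging back, $\mathfrak{M}=\overline{\mathbb{H}}(\overline{g_f(\mathcal{F})})$, as required.

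The main obstacle is the reverse inclusion $\mathfrak{M}\subseteq\overline{\mathbb{H}}(\mathfrak{I})$: one must ensure that an arbitrary element of $\mathfrak{M}$ can be rebuilt from a scalar element of $\mathfrak{I}$. This is precisely where the two new ingredients of the preceding section are essential — the convexity property $x\in\mathfrak{M}\Leftrightarrow n(x)\in\mathfrak{M}$ from Proposition \ref{colom-13}(a), and the polar-type factorization $x=n(x)\,\Theta^{-1}_{\hat{x}}$ coming from (\ref{polar}). Without these, the noncommutativity of $\overline{\mathbb{H}}$ would prevent a direct reduction to the commutative classification.
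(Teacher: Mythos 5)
Your proof is correct but follows a genuinely different path from the paper's. The paper first observes that $\mathfrak{M}\cap\overline{\mathbb{R}}$ is prime, invokes Theorem~\ref{colom-3}(a) to produce $g_f(\mathcal{F})\subseteq\mathfrak{M}\cap\overline{\mathbb{R}}$, then uses the closedness of maximal ideals (Corollary~\ref{colom-12}) to upgrade this to $\overline{g_f(\mathcal{F})}=\mathfrak{M}\cap\overline{\mathbb{R}}$, and finally argues that $\overline{\mathbb{H}}(\overline{g_f(\mathcal{F})})$ is itself maximal by identifying $\overline{\mathbb{H}}/\overline{\mathbb{H}}(\overline{g_f(\mathcal{F})})$ with a quaternion algebra over a field in which $\mathbb{R}$ is algebraically closed, hence a simple ring. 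You bypass the closedness argument and the splitting/simplicity of the quotient quaternion algebra entirely: you establish the key identity $\mathfrak{M}=\overline{\mathbb{H}}(\mathfrak{M}\cap\overline{\mathbb{R}})$ directly from the norm convexity in Proposition~\ref{colom-13}(a) together with the polar factorization~(\ref{polar}), and then read off maximality of the trace $\mathfrak{M}\cap\overline{\mathbb{R}}$ from maximality of $\mathfrak{M}$, feeding it into Theorem~\ref{colom-3}(b). Both arguments are sound; yours stays closer to the machinery built in Section~\ref{sec-3}, while the paper leans on the structure theory of quaternion algebras over fields. One small slip (which the paper itself also makes in the proof of Proposition~\ref{colom-13}): equation~(\ref{polar}) reads $\theta_{\hat{y}}\cdot n(\hat{y})=\hat{y}$, so the factorization should be $x=n(x)\,\Theta_{\hat{x}}$, not $x=n(x)\,\Theta_{\hat{x}}^{-1}$; since both $\Theta_{\hat{x}}$ and $\Theta_{\hat{x}}^{-1}$ lie in $\overline{\mathbb{H}}$ this is harmless, but it should be stated the right way around.
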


\begin{proof}
We clearly have that $\mathfrak{M}\cap\overline{\mathbb{R}}$ is a
prime ideal. Thus, there exists a unique
$\mathcal{F}\in\mathcal{P}_*(\mathcal{S}_f)$, such
that $$g_f(\mathcal{F})\subseteq
\mathfrak{M}\cap\overline{\mathbb{R}}\subseteq \mathfrak{M}.$$  Hence,
we have
that $$\overline{g_f(\mathcal{F})}\subseteq\overline{\mathfrak{M}}=\mathfrak{M},$$
which implies that
$\overline{g_f(\mathcal{F})}=\mathfrak{M}\cap\overline{\mathbb{R}}$. Consequently,
$\mathfrak{M}\cap\overline{\mathbb{R}}$ is a maximal
ideal. Since,  $$\overline{\mathbb{H}}(\overline{g_f(\mathcal{F})})=\overline{g_f(\mathcal{F})}1+
\overline{g_f(\mathcal{F})}i+\overline{g_f(\mathcal{F})}j+\overline{g_f(\mathcal{F})}k,$$
 then  $$\overline{\mathbb{H}}/\overline{\mathbb{H}}(\overline{g_f(\mathcal{F})})\cong
\overline{\mathbb{H}}(\overline{\mathbb{R}}/\overline{g_f(\mathcal{F})})$$
and by the fact that  $\mathbb{R}$ is algebraically closed in
$\overline{\mathbb{R}}/\overline{g_f(\mathcal{F})}$ and $\overline{\mathbb{R}}$ is simple, 
 we have  that  $\mathfrak{M}=\overline{\mathbb{H}}(\overline{g_f(\mathcal{F})})$.  
\end{proof}

Let $\mathfrak{I}$ be an ideal of $\overline{\mathbb{H}}$  and  denoted
by $n(\mathfrak{I})$, the ideal of $\overline{\mathbb{R}}$ generated
by the set $\{n(x)|x\in\mathfrak{I}\}$.

The next definition is well known, see \cite{CR}.

\begin{defn}\label{colom-15} A ring $S$  is said to be right(left) duo if all 
right(left) ideals of $S$  are two-sided ideals. Moreover, $S$  
is duo if it is right and left duo. 
\end{defn}  

Now, we prove an interesting result for the generalized quaternion rings.

\begin{teo}\label{colom-16}
$\overline{\mathbb{H}}$ is duo.
\end{teo}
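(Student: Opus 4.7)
The plan is to use the polar decomposition $x=n(x)\,\Theta_{\hat{x}}$ of equation~(\ref{polar}), together with two facts that are either explicit or immediate from the preceding material: first, $n(x)\in\overline{\mathbb{R}}$ lies in the center of $\overline{\mathbb{H}}=\mathbb{H}(\overline{\mathbb{R}})$; and second, $\Theta_{\hat{x}}\in \inv(\overline{\mathbb{H}})$ for every $x\in\overline{\mathbb{H}}$. The latter follows from Lemma~\ref{unit}(a), since at the level of representatives $|\theta_{\hat{x}}(\varphi)|=1$ forces $n(\Theta_{\hat{x}})=1\in \inv(\overline{\mathbb{R}})$, and this holds irrespective of whether $x$ itself is a unit or a zero-divisor. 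This observation slightly strengthens the right-ideal reasoning already used in the proof of Proposition~\ref{colom-13} and lifts it to arbitrary one-sided ideals.

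To show that $\overline{\mathbb{H}}$ is right duo, I would take a right ideal $I$ of $\overline{\mathbb{H}}$ and $x\in I$. Multiplying on the right by the unit $\Theta_{\hat{x}}^{-1}$ gives
\[
n(x)\;=\;x\,\Theta_{\hat{x}}^{-1}\;\in\;I.
\]
For an arbitrary $y\in\overline{\mathbb{H}}$, the centrality of $n(x)$ then yields
\[
yx \;=\; y\,n(x)\,\Theta_{\hat{x}} \;=\; n(x)\,(y\,\Theta_{\hat{x}}),
\]
and the right-hand side lies in $I$ because $n(x)\in I$ and $I$ is closed under right multiplication by $y\,\Theta_{\hat{x}}\in\overline{\mathbb{H}}$. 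Hence $I$ is stable under left multiplication and is therefore two-sided.

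The argument for left-duoness is completely symmetric: if $J$ is a left ideal and $x\in J$, multiplying on the left by $\Theta_{\hat{x}}^{-1}$ extracts $n(x)\in J$, and then for any $y\in\overline{\mathbb{H}}$ one has $xy=n(x)\,\Theta_{\hat{x}}\,y=(\Theta_{\hat{x}}\,y)\,n(x)\in J$, using centrality once more and the left-ideal property. Combining both orientations and invoking Definition~\ref{colom-15} shows that $\overline{\mathbb{H}}$ is duo.

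I do not foresee a genuine obstacle. The entire content is packaged in the polar decomposition together with the fact that the norm is central; the only point that requires a moment of care is the invertibility of $\Theta_{\hat{x}}$ for every $x$, including zero-divisors, since that is precisely what permits a single unified argument that works on both sides.
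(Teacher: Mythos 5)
Your proof is correct, and it takes a cleaner route than the paper while sharing the same kernel. Both arguments rest on the polar decomposition $x = n(x)\,\Theta_{\hat{x}}$ and on $\Theta_{\hat{x}}$ being a unit (the paper uses this tacitly in Proposition~\ref{colom-13}; you justify it explicitly from $n(\Theta_{\hat{x}})=1$ and Lemma~\ref{unit}(a), correctly noting it holds even when $x$ is a zero divisor). From there the proofs diverge. The paper introduces the two-sided ideal $\langle \mathfrak{I}\cap\overline{\mathbb{R}}\rangle = \{x : n(x)\in\mathfrak{I}\cap\overline{\mathbb{R}}\}$ and shows it equals $\mathfrak{I}$; the inclusion $\mathfrak{I}\subseteq\langle\mathfrak{I}\cap\overline{\mathbb{R}}\rangle$ is obtained by passing through $n(y)^2 = y\bar{y}$ and invoking the convexity statement Proposition~\ref{colom-13}(b) via the inequality $n(y)\le n(y)^2$ — an inequality that is not automatic (it amounts to $n(y)(n(y)-1)\ge 0$) and really needs more care than the paper gives it. You sidestep all of this: once $n(x)=x\Theta_{\hat{x}}^{-1}\in I$, centrality of $n(x)$ in $\mathbb{H}(\overline{\mathbb{R}})$ lets you write $yx = n(x)\,(y\Theta_{\hat{x}})$, which lies in $I$ by the right-ideal property alone, and the left-ideal case is symmetric. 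So your argument dispenses with the auxiliary ideal, with Proposition~\ref{colom-13}(b), and with the delicate inequality, and it quietly repairs a soft spot in the published proof.
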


\begin{proof}
Let $\mathfrak{I}$ be a right ideal of $\overline{\mathbb{H}}$ and  we claim 
that $\langle \mathfrak{I}\cap \overline{R} \rangle=\{ x\in
\overline{\mathbb{H}}:n(x)\in \mathfrak{I}\cap \overline{\mathbb{R}}\}=\mathfrak{I}$. 
In fact, it is not  difficult to show that $\langle \mathfrak{I}\cap
\overline{\mathbb{R}} \rangle$ is an ideal of $\overline{\mathbb{H}}$.  
Let $y\in \mathfrak{I}$. Then we have that $n(y)^2=y.\overline{y}\in \mathfrak{I}\cap
\overline{\mathbb{R}}$. Since $n(y)\leq n(y)^2\in \mathfrak{I}\cap
\overline{\mathbb{R}}$, then by Proposition \ref{colom-13}-$b)$, we have 
that $n(y)\in \mathfrak{I}\cap \overline{\mathbb{R}}$. Thus, 
$y\in \langle \mathfrak{I}\cap \overline{\mathbb{R}}\rangle$ 
and we have that $\mathfrak{I}\subseteq \langle \mathfrak{I}\cap \overline{\mathbb{R}}\rangle$.
On the other hand,  for each $a\in \langle \mathfrak{I}\cap
\overline{\mathbb{R}}\rangle$ we have that $n(a)\in \mathfrak{I}\cap
\overline{\mathbb{R}}\subseteq \mathfrak{I}$. Since, $n(a)=a\Theta_{\hat{a}},$ 
then $a=n(a).\Theta^{-1}_{\hat{a}}\in \mathfrak{I}$. So, $\mathfrak{I}=\langle \mathfrak{I}\cap
\overline{\mathbb{R}}\rangle$ 
and we obtain that $\mathfrak{I}$ is an ideal of $\overline{\mathbb{H}}$. 
Similarly, we prove that the left ideals are ideals. 
Therefore,  $\overline{\mathbb{H}}$ is duo.
\end{proof}

Let $S$ be a ring, and denote by $\gamma(S)$, its Brown McCoy radical, i.e.,
$\gamma(S)$ is the intersection of all ideals $\mathfrak{M}$ of $S$, such
that $S/\mathfrak{M}$ is a simple and unitary. Note that if $S$ is either 
commutative or duo, then $\gamma(S)$ coincide with the Jacobson
radical. In the next result, we completely characterize the Jacobson
radical of $\overline{\mathbb{H}}$, since by Theorem \ref{colom-16}, we have that 
$\overline{\mathbb{H}}$ is duo.

\begin{lema}\label{ideal-3}
$J(\overline{\mathbb{H}})\cap\overline{\mathbb{R}}=J(\overline{\mathbb{R}})$. In
particular, $J(\overline{\mathbb{H}})=(0)$.
\end{lema}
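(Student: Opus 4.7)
The plan is to compute both Jacobson radicals explicitly as intersections of maximal ideals, using the classifications provided by Theorem \ref{ideal-1} (for $\overline{\mathbb{H}}$) and Theorem \ref{colom-3}-$b)$ (for $\overline{\mathbb{R}}$). Since $\overline{\mathbb{H}}$ is duo by Theorem \ref{colom-16}, the Jacobson radical equals the intersection of maximal two-sided ideals and there is no ambiguity between the left, right, and two-sided versions.

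First I would write
$$J(\overline{\mathbb{H}}) = \bigcap_{\mathcal{F} \in \mathcal{P}_*(\mathcal{S}_f)} \overline{\mathbb{H}}(\overline{g_f(\mathcal{F})}), \qquad J(\overline{\mathbb{R}}) = \bigcap_{\mathcal{F} \in \mathcal{P}_*(\mathcal{S}_f)} \overline{g_f(\mathcal{F})}.$$
An element $x_0 + x_1 i + x_2 j + x_3 k$ of $\overline{\mathbb{H}}(\overline{g_f(\mathcal{F})})$ has all coefficients in $\overline{g_f(\mathcal{F})}$; if it additionally belongs to $\overline{\mathbb{R}}$, then $x_1 = x_2 = x_3 = 0$ and the element reduces to $x_0 \in \overline{g_f(\mathcal{F})}$. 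Hence $\overline{\mathbb{H}}(\overline{g_f(\mathcal{F})}) \cap \overline{\mathbb{R}} = \overline{g_f(\mathcal{F})}$. Intersecting over all $\mathcal{F}$ and exchanging the intersection with $\overline{\mathbb{R}}$ yields the first equality $J(\overline{\mathbb{H}}) \cap \overline{\mathbb{R}} = J(\overline{\mathbb{R}})$.

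For the second assertion I would combine this with the convexity of ideals. Given $x \in J(\overline{\mathbb{H}})$, Proposition \ref{colom-13}-$a)$ applied to the ideal $J(\overline{\mathbb{H}})$ shows $n(x) \in J(\overline{\mathbb{H}})$; since $n(x) \in \overline{\mathbb{R}}$, the first part gives $n(x) \in J(\overline{\mathbb{R}})$. Using the known fact $J(\overline{\mathbb{R}}) = (0)$, we conclude $n(x) = 0$, and a second application of Proposition \ref{colom-13}-$a)$ with the zero ideal then yields $x = 0$, so $J(\overline{\mathbb{H}}) = (0)$.

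The step I expect to be most delicate is the appeal to $J(\overline{\mathbb{R}}) = (0)$: this fact is essentially contained in the classification of maximal ideals in \cite{JRJ}, but since it is not isolated as a lemma in the excerpt, to be self-contained I would need to show that for every nonzero $x \in \overline{\mathbb{R}}$ there exists $\mathcal{F} \in \mathcal{P}_*(\mathcal{S}_f)$ with $x \notin \overline{g_f(\mathcal{F})}$. This requires a careful construction of $\mathcal{F}$ from a non-null representative of $x$, together with a topological argument inside the sharp topology on $\overline{\mathbb{R}}$, and is the most technical part of the proof.
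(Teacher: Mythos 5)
Your proof follows essentially the same route as the paper's: both deduce $J(\overline{\mathbb{H}})\cap\overline{\mathbb{R}}=J(\overline{\mathbb{R}})$ from the classifications of maximal ideals (Theorem \ref{colom-3}-$b)$ for $\overline{\mathbb{R}}$ and Theorem \ref{ideal-1} for $\overline{\mathbb{H}}$) together with the correspondence $\mathfrak{M}\mapsto\overline{\mathbb{H}}(\mathfrak{M})$ and the identity $\overline{\mathbb{H}}(\mathfrak{M})\cap\overline{\mathbb{R}}=\mathfrak{M}$, the only difference being that you phrase it as an equality of explicit intersections while the paper argues the two inclusions separately. The paper's proof leaves the final implication $J(\overline{\mathbb{H}})\cap\overline{\mathbb{R}}=(0)\Rightarrow J(\overline{\mathbb{H}})=(0)$ implicit, and your passage through $n(x)$ and two applications of Proposition \ref{colom-13}-$a)$ correctly supplies it; your worry at the end about $J(\overline{\mathbb{R}})=(0)$ being unavailable is unfounded, since the paper simply cites it from (\cite{JRJ}, Theorem 3.20).
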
  

\begin{proof}
To prove this, we need to prove the both inclusions below:
\begin{enumerate}
\item[$i)$]
  $J(\overline{\mathbb{H}})\cap\overline{\mathbb{R}}\subseteq J(\overline{\mathbb{R}})$;
\item[$ii)$] $J(\overline{\mathbb{R}})\subseteq J(\overline{\mathbb{H}})\cap\overline{\mathbb{R}}$. 
\end{enumerate}

Let us prove $i)$: Let $\mathfrak{M}$ be a maximal ideal of
$\overline{\mathbb{R}}$. Then by the Theorem 3.21-(1) of \cite{JRJ},
we have that $\mathfrak{M}=\overline{g_f(\mathcal{F})}$ for some
$\mathcal{F}\in\mathcal{P}_*(\mathcal{S}_f)$ and we get that 
$\overline{\mathbb{H}}(\mathfrak{M})=\overline{\mathbb{H}}(\overline{g_f(\mathcal{F})})$ is
a maximal ideal of $\overline{\mathbb{H}}$, which implies that
$J(\overline{\mathbb{R}})\supseteq
J(\overline{\mathbb{H}})\cap\overline{\mathbb{R}}$. 

Now we will prove $ii)$: Let $\mathfrak{M}$ be a maximal ideal of
$\overline{\mathbb{H}}$. Then by the Theorem \ref{ideal-1}, we have
that $\mathfrak{M}=\overline{\mathbb{H}}(\overline{g_f(\mathcal{F})})$ for some
$\mathcal{F}\in\mathcal{P}_*(\mathcal{S}_f)$, but in this case, we
have that
$\overline{g_f(\mathcal{F})})=\mathfrak{M}\cap\overline{\mathbb{R}}$
is a maximal ideal of $\overline{\mathbb{R}}$ and thus
$J(\overline{\mathbb{R}})\subseteq
J(\overline{\mathbb{H}})\cap\overline{\mathbb{R}}$. 

From $i)$ and $ii)$, we have that
$J(\overline{\mathbb{H}})\cap\overline{\mathbb{R}}=J(\overline{\mathbb{R}})$. 

By (\cite{JRJ}, Theorem 3.20) the Jacobson radical of
$\overline{\mathbb{R}}$ is zero and it follows that 
$J(\overline{\mathbb{H}})\cap \overline{\mathbb{R}}=0$.  
So, $J(\overline{\mathbb{H}})=0$. 
\end{proof}

Using similar ideals of (\cite{JRJ}, Theorem 4.17) and (\cite{CFJ},
Proposition 4.5), we have the following Proposition.

\begin{prop}\label{ideal-4}
The ring $\overline{\mathbb{H}}$ is not Von Neumann regular.
\end{prop}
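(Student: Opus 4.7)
The plan is a one-line reduction to the fact already recorded in Section~\ref{sec-2} (immediately after Theorem~\ref{colom-3}) that $\overline{\mathbb{K}}$, and in particular $\overline{\mathbb{R}}$, is not Von Neumann regular because the minimal prime ideals $g_f(\mathcal{F}_{\mathfrak{p}})$ need not be closed. Concretely, I would assume for contradiction that $\overline{\mathbb{H}}$ is Von Neumann regular and show that $\overline{\mathbb{R}}$ must inherit this property, contradicting the above.

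The key steps I would carry out are the following. First, fix an arbitrary $x\in\overline{\mathbb{R}}\subseteq\overline{\mathbb{H}}$; by the Von Neumann regularity hypothesis there exists $y\in\overline{\mathbb{H}}$ with $xyx=x$. Second, decompose $y=y_0+y_1i+y_2j+y_3k$ with $y_n\in\overline{\mathbb{R}}$ (permitted by Lemma~\ref{colom-7} together with Definition~\ref{colom-6}, which give $\overline{\mathbb{H}}=\mathbb{H}(\overline{\mathbb{R}})$ as an $\overline{\mathbb{R}}$-algebra with free basis $\{1,i,j,k\}$). Third, exploit that $x\in\overline{\mathbb{R}}$ is central in $\overline{\mathbb{H}}$ (real scalars commute with $i,j,k$) to expand
\[
xyx=(xy_0x)+(xy_1x)\,i+(xy_2x)\,j+(xy_3x)\,k.
\]
Fourth, compare with $x=x\cdot 1+0\cdot i+0\cdot j+0\cdot k$ and use the freeness of the basis to extract the coordinate equation $xy_0x=x$ with $y_0\in\overline{\mathbb{R}}$. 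Since $x$ was arbitrary in $\overline{\mathbb{R}}$, this shows $\overline{\mathbb{R}}$ is Von Neumann regular, a contradiction.

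I do not anticipate any real obstacle in executing this plan: the reduction is purely formal and mirrors the strategy cited for the simplified ring $\overline{\mathbb{H}}_s$ in (\cite{CFJ}, Proposition~4.5). The only point that deserves a line of justification is the centrality of $\overline{\mathbb{R}}$ in $\overline{\mathbb{H}}$ and the linear independence of $\{1,i,j,k\}$ over $\overline{\mathbb{R}}$, both of which are immediate from the construction of $\overline{\mathbb{H}}$ in Section~\ref{sec-3}. An essentially equivalent alternative would use Theorem~\ref{colom-14} to argue that any idempotent generator of a principal ideal $(x)$ with $x\in\overline{\mathbb{R}}$ already lies in $\overline{\mathbb{R}}$, then analyze $x=ae$ and $e=bx$ coordinate-wise to conclude $(x)_{\overline{\mathbb{R}}}=(e)_{\overline{\mathbb{R}}}$; either route reduces to JRJ's result that $\overline{\mathbb{R}}$ is not Von Neumann regular.
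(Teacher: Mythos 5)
Your proposal is correct. The paper itself omits the proof, merely stating that the result follows ``using similar ideas of'' (\cite{JRJ}, Theorem 4.17) and (\cite{CFJ}, Proposition 4.5) --- the former being the non--regularity of $\overline{\mathbb{R}}$, the latter the analogous statement for $\overline{\mathbb{H}}_s$. Your reduction supplies exactly the argument being gestured at: assume $\overline{\mathbb{H}}$ is Von Neumann regular, pick $x\in\overline{\mathbb{R}}$ and $y=y_0+y_1i+y_2j+y_3k\in\overline{\mathbb{H}}$ with $xyx=x$, use the centrality of $\overline{\mathbb{R}}$ and the freeness of $\{1,i,j,k\}$ over $\overline{\mathbb{R}}$ to read off the scalar coordinate $xy_0x=x$, conclude $\overline{\mathbb{R}}$ is Von Neumann regular, and contradict the JRJ result that $g_f(\mathcal{F}_{\mathfrak{p}})$ can be a non--closed (hence non--maximal) minimal prime, which is incompatible with regularity. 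The two side points you flag --- centrality of $\overline{\mathbb{R}}$ and linear independence of the basis --- are indeed immediate from Lemma~\ref{colom-7} and the identification $\overline{\mathbb{H}}=\mathbb{H}(\overline{\mathbb{R}})$. Your alternative route via Theorem~\ref{colom-14} is also viable but less direct; the coordinate-extraction argument is the cleaner one and matches the intent of the citation to \cite{CFJ}.
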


According to \cite{ARA}, a noncommutative ring $S$ is exchange if for
any $a\in S$ there exists an idempotent $e\in S$ 
such that $a+e$ is invertible.

Next, we want to show that $\overline{\mathbb{H}}$ is an exchange ring 
and, for this, we need to prove that $\overline{\mathbb{R}}$ is an
exchange ring as the next proposition shows.

\begin{prop}\label{colom-17}
$\overline{\mathbb{K}}$ is an exchange ring. In particular, if
$\mathbb{K}=\mathbb{R}$, then $\overline{\mathbb{R}}$ is an exchange ring.
\end{prop}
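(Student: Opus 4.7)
The plan is to exhibit, for each $a\in\overline{\mathbb{K}}$, an explicit idempotent $e$ such that $a+e\in\inv(\overline{\mathbb{K}})$. I would fix any representative $\hat a\in\mathcal{E}_M(\mathbb{K})$ of $a$ and define the companion function
\[
\hat b(\varphi):=\begin{cases}1,&\text{if }|\hat a(\varphi)|\le 1/2,\\ 0,&\text{otherwise},\end{cases}\qquad\varphi\in\mathcal{A}_0(\mathbb{K}).
\]
Since $\hat b$ is bounded by $1$ it is automatically moderate, and the pointwise identity $\hat b^{\,2}=\hat b$ shows that the class $e:=[\hat b]\in\overline{\mathbb{K}}$ is an idempotent; by the identification $\mathcal{B}(\overline{\mathbb{K}})=\{\mathcal{X}_A:A\in\mathcal{S}_f\}$ recalled in Section~\ref{sec-2}, this $e$ is of the form $\mathcal{X}_A$ for some $A\in\mathcal{S}_f$, though the argument below does not require one to identify $A$ explicitly.

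Next I would show that $a+e$ is a unit by estimating the modulus of the representative $\hat a+\hat b$ of $a+e$ pointwise on $\mathcal{A}_0(\mathbb{K})$. On the set $\{\varphi:\hat b(\varphi)=1\}$, the triangle inequality yields
\[
|(\hat a+\hat b)(\varphi)|=|\hat a(\varphi)+1|\ge 1-|\hat a(\varphi)|\ge \tfrac12,
\]
while on its complement
\[
|(\hat a+\hat b)(\varphi)|=|\hat a(\varphi)|>\tfrac12.
\]
Hence $|(\hat a+\hat b)(\varphi)|\ge 1/2$ for every $\varphi\in\mathcal{A}_0(\mathbb{K})$, with a bound that does not depend on any scale parameter.

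Finally, I would invoke Theorem~\ref{colom-2} with $r:=1$ and $\tau(\varphi):=\min\{1,\,1/(2\,i(\varphi))\}$: for $0<\varepsilon<\tau(\varphi)$ one has
\[
\dot\alpha_1(\varphi_\varepsilon)=\varepsilon\,i(\varphi)\le \tfrac12\le |(\hat a+\hat b)(\varphi_\varepsilon)|,
\]
so Theorem~\ref{colom-2} delivers $a+e\in\inv(\overline{\mathbb{K}})$, establishing the exchange property. The one point that I expect to warrant care—and which I regard as the only genuine subtlety—is the very first one: verifying that the crude pointwise cut-off at $1/2$ does produce a bona fide idempotent of $\overline{\mathbb{K}}$ rather than merely a formally idempotent class. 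This is what the characterization $\mathcal{B}(\overline{\mathbb{K}})=\{\mathcal{X}_A:A\in\mathcal{S}_f\}$ guarantees, and it is precisely what allows the argument to sidestep any case analysis depending on whether the level set $\{\varphi:|\hat a(\varphi)|\le 1/2\}$ itself lies in $\mathcal{S}_f$; the uniform lower bound $1/2$ on $|\hat a+\hat b|$ does the rest of the work through Theorem~\ref{colom-2}.
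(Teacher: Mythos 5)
Your argument is correct and follows essentially the paper's route: both proofs build the idempotent as the characteristic function of the sublevel set $\{|\hat a|\le 1/2\}$, establish the uniform pointwise bound $|(\hat a+\hat b)(\varphi_\varepsilon)|\ge 1/2$, and conclude via Theorem~\ref{colom-2} with a suitable $r$ and $\tau$. One small remark: invoking $\mathcal{B}(\overline{\mathbb{K}})=\{\mathcal{X}_A:A\in\mathcal{S}_f\}$ to ``certify'' that $e=[\hat b]$ is a bona fide idempotent is unnecessary and slightly backwards --- the identity $\hat b^2=\hat b$ in $\mathcal{E}_M(\mathbb{K})$ already makes $[\hat b]$ idempotent in the quotient, independently of whether the level set belongs to $\mathcal{S}_f$, so no subtlety is actually being sidestepped there.
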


\begin{proof} Let $x\in\overline{\mathbb{K}}, ~\hat{x}$ its
  representative, \[T=\{\varphi\in
  \mathcal{A}_0(\mathbb{K}):\exists ~p\in\mathbb{N} ~\mbox{such
  that} ~|\hat{x}(\varphi_{\varepsilon})|\le
1/2,~\forall~\varphi\in\mathcal{A}_p(\mathbb{K}),
~\forall~\varepsilon\in I_{\eta}\},\] where $I_\eta\subset (0,1)$, and 
$e_T=\mathcal{X}_T$. We claim that $x+e_T$ is an invertible
element. i.e, $|(\hat{x}+\hat{e}_t)(\varphi_\varepsilon)|\ge\frac{1}{2}$
for $r=\log_\varepsilon(\frac{1}{2i(\varphi)})$ in
$\dot{\alpha}_r(\varphi_\varepsilon)=\frac{1}{2}$ as in Theorem \ref{colom-2}. 
In fact, if $\varphi\in T$, then there exists $p\in\mathbb{N} ~\mbox{such
  that} ~|\hat{x}(\varphi_{\varepsilon})|\le
1/2,~\forall~\varphi\in\mathcal{A}_p(\mathbb{K}),
~\forall~\varepsilon\in I_{\eta}$. Hence,
\begin{eqnarray}\label{exchan-1}  
|(\hat{x}+\hat{e}_T)(\varphi_{\varepsilon})|&=&|\hat{x}(\varphi_{\varepsilon})+
\hat{e}_T(\varphi_{\varepsilon})|\nonumber\\
&=&|\hat{x}(\varphi_{\epsilon})+1|\nonumber\\
&\ge&1/2.
\end{eqnarray}
Now, if $\varphi\notin T$, then by definition of $T$, we have that
$|\hat{x}(\varphi_\varepsilon)|>\frac{1}{2}$ and $\hat{e}_T(\varphi_\varepsilon)=0$, therefore 
\begin{eqnarray}\label{exchan-2}
|(\hat{x}+\hat{e}_T)(\varphi_\varepsilon)|&=&|\hat{x}(\varphi_\varepsilon)+\hat{e}_T(\varphi_\varepsilon)|\nonumber\\
&=&|\hat{x}(\varphi_\varepsilon)|\nonumber\\
&>&\frac{1}{2}
\end{eqnarray}
and we consider $r$ as above. Hence, in both cases, we have from
(\ref{exchan-1}) and (\ref{exchan-2}) that $x+e_T$ is invertible element, 
and we have that $\overline{\mathbb{R}}$ is an exchange ring.
\end{proof}

Note that the Proposition \ref{colom-17} holds for $\overline{\mathbb{K}}, i.e.,
\mathbb{K}=\mathbb{C}$ or $\mathbb{K}=\mathbb{R}$. In the next result, we use Proposition \ref{colom-17} for
$\mathbb{K}=\mathbb{R}$ to prove it.

\begin{teo}\label{colom-18}
$\overline{\mathbb{H}}$ is an exchange ring. 
\end{teo}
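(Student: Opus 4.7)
The plan is to mirror the construction used in Proposition \ref{colom-17} for the scalar case, relying on two ingredients already established. By Theorem \ref{colom-14}, every idempotent of $\overline{\mathbb{H}}$ is a characteristic function $\mathcal{X}_A$ with $A \in \mathcal{S}_f$, so exchange idempotents produced at the real level automatically live in $\overline{\mathbb{H}}$; and by Lemma \ref{unit}(a), an element $y \in \overline{\mathbb{H}}$ is a unit if and only if $n(y) \in \inv(\overline{\mathbb{R}})$, so checking invertibility of the shifted element can be reduced to the scalar Fundamental Theorem.

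Given $x \in \overline{\mathbb{H}}$ with representative $\hat{x}$, I would first define, as in the scalar case,
$$T := \{\varphi \in \mathcal{A}_0(\mathbb{K}) : \exists\, p \in \mathbb{N} \text{ such that } n(\hat{x}(\varphi_\varepsilon)) \le 1/2,\ \forall\, \varphi \in \mathcal{A}_p(\mathbb{K}),\ \forall\, \varepsilon \in I_\eta\},$$
with the quaternion norm $n(\cdot)$ replacing $|\cdot|$. Set $e := \mathcal{X}_T$; this is an idempotent of $\overline{\mathbb{H}}$ by Theorem \ref{colom-14}. I would then show that $n(x+e)$ is a unit in $\overline{\mathbb{R}}$ by a case split on whether $\varphi \in T$ or $\varphi \notin T$. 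In the first case $\hat{e}(\varphi) = 1$, and the reverse triangle inequality for $n$ gives $n(\hat{x}(\varphi_\varepsilon) + 1) \ge 1 - n(\hat{x}(\varphi_\varepsilon)) \ge 1/2$; in the second case $\hat{e}(\varphi) = 0$ and, negating the defining condition of $T$ exactly as in Proposition \ref{colom-17}, one has $n(\hat{x}(\varphi_\varepsilon)) > 1/2$ for small $\varepsilon$. Either way, $n(\widehat{x+e})(\varphi_\varepsilon) \ge 1/2 \ge \dot{\alpha}_r(\varphi_\varepsilon)$ for $r > 0$ sufficiently large, so Theorem \ref{colom-2} yields $n(x+e) \in \inv(\overline{\mathbb{R}})$, and Lemma \ref{unit}(a) finishes the job by giving $x + e \in \inv(\overline{\mathbb{H}})$.

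The main (mild) obstacle is the Case 1 estimate: the scalar argument simply used $|\hat{x}(\varphi_\varepsilon) + 1| \ge 1/2$, but in the noncommutative quaternion setting one cannot just shift a real part and take absolute values. The substitute is the reverse triangle inequality $n(u+v) \ge |n(u) - n(v)|$, valid because $n$ is the Euclidean norm on $\mathbb{H} \cong \mathbb{R}^4$; taking $u = 1$ and $v = \hat{x}(\varphi_\varepsilon)$ delivers the required lower bound. Everything else is a verbatim transcription of Proposition \ref{colom-17}, with the scalar modulus replaced by $n$ and the passage back to $\overline{\mathbb{H}}$ mediated by Lemma \ref{unit}.
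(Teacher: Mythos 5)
Your proof is correct, but it takes a genuinely different route from the paper. You transpose the entire construction of Proposition~\ref{colom-17} to the quaternion level: define the set $T$ using the quaternion norm $n$ in place of $|\cdot|$, set $e=\mathcal{X}_T$, and then run the two-case estimate on $n(\widehat{x+e})$, supplying the reverse triangle inequality $n(u+v)\ge|n(u)-n(v)|$ on $\mathbb{H}\cong\mathbb{R}^4$ to handle Case~1, and finally passing through Theorem~\ref{colom-2} and Lemma~\ref{unit}(a). The paper instead treats Proposition~\ref{colom-17} as a black box: given $a=a_0+a_1i+a_2j+a_3k$, it applies the scalar exchange property to the component $a_0$ alone, obtaining an idempotent $e\in\overline{\mathbb{R}}\subset\overline{\mathbb{H}}$ (these idempotent sets coincide by Theorem~\ref{colom-14}) with $a_0+e$ invertible in $\overline{\mathbb{R}}$, and then invokes Lemma~\ref{unit}(c) --- if one coordinate of a quaternion is a unit, the quaternion is a unit --- to conclude that $a+e=(a_0+e)+a_1i+a_2j+a_3k$ is invertible in $\overline{\mathbb{H}}$. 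Your version is more self-contained and shows explicitly why the idempotent controls the full quaternion norm, at the cost of re-deriving the covering argument and needing the reverse triangle inequality; the paper's version is shorter and exploits the componentwise structure more sharply, with Lemma~\ref{unit}(c) rather than (a) doing the lifting. Both are valid; the paper's reduction is arguably the slicker observation once Lemma~\ref{unit}(c) is on the table.
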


\begin{proof}  Let $(a+a_1i+a_2j+a_3k)\in \overline{\mathbb{H}}$. 
Since $\overline{\mathbb{R}}$ is an exchange ring, there exists an 
idempotent $e\in \overline{\mathbb{R}}$, such that $a_0+e$ is
invertible, So, by Lemma \ref{unit} $(a+e)=(a_0+e)+a_1i+a_2j+a_3k$ 
is invertible in $\overline{\mathbb{H}}$ and it follows the result.
\end{proof}

The next definition is well-known and we can see an example 
in \cite{MAT,LT} and \cite{MJ}

\begin{defn}\label{colom-19}
Let $S$ be a noncommutative reduced ring with an order
``$\leq$''. We say that $S$ is nornal if for two minimal prime ideals 
$\mathfrak{P}_1$, $\mathfrak{P}_2$ of $S$ such that
$\mathfrak{P_1}\neq\mathfrak{P}_2$, we have
$S=\mathfrak{P}_1+\mathfrak{P}_2$. 
\end{defn}

\begin{teo}\label{colom-20}
$\overline{\mathbb{H}}$ is normal  
\end{teo}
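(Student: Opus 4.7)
The plan is to reduce the normality of $\overline{\mathbb{H}}$ to the combinatorial structure of the minimal primes of $\overline{\mathbb{R}}$, exploiting the duo correspondence established in Theorem~\ref{colom-16}. First I would record that $\overline{\mathbb{H}}$ is reduced: if $x^k=0$, then $n(x)^k = n(x^k) = 0$ in $\overline{\mathbb{R}}$, and since $\overline{\mathbb{R}}$ is reduced one obtains $n(x)=0$; from $x\bar x = n(x)^2 = 0$ together with the non-negativity of each $x_\ell^2$, the coordinates vanish one by one in $\overline{\mathbb{R}}$ and so $x=0$. This makes Definition~\ref{colom-19} applicable.

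Next, the argument in the proof of Theorem~\ref{colom-16} shows that every ideal $\mathfrak{I}\lhd\overline{\mathbb{H}}$ satisfies $\mathfrak{I}=\overline{\mathbb{H}}(\mathfrak{I}\cap\overline{\mathbb{R}})$. Combined with Remark~\ref{ideal-2}, which identifies $\overline{\mathbb{H}}/\overline{\mathbb{H}}(\mathfrak{J})\cong\overline{\mathbb{H}}(\overline{\mathbb{R}}/\mathfrak{J})$, this yields an inclusion-preserving bijection between the ideals of $\overline{\mathbb{H}}$ and those of $\overline{\mathbb{R}}$ that carries prime ideals to prime ideals (and thus minimal primes to minimal primes). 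By the remark following Theorem~\ref{colom-3}, the minimal primes of $\overline{\mathbb{R}}$ are precisely the $g_f(\mathcal{F})$ with $\mathcal{F}\in\mathcal{P}_*(\mathcal{S}_f)$; hence the minimal primes of $\overline{\mathbb{H}}$ are exactly the ideals $\mathfrak{P}_\mathcal{F}:=\overline{\mathbb{H}}(g_f(\mathcal{F}))$.

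To finish, let $\mathcal{F}_1\neq\mathcal{F}_2$ in $\mathcal{P}_*(\mathcal{S}_f)$, and after relabelling choose $A\in\mathcal{F}_1\setminus\mathcal{F}_2$. By the very definition of $\mathcal{P}_*(\mathcal{S}_f)$, either $A$ or $A^c$ belongs to $\mathcal{F}_2$, so $A^c\in\mathcal{F}_2$. Then $\mathcal{X}_A\in g_f(\mathcal{F}_1)\subseteq\mathfrak{P}_{\mathcal{F}_1}$, $\mathcal{X}_{A^c}\in g_f(\mathcal{F}_2)\subseteq\mathfrak{P}_{\mathcal{F}_2}$, and $\mathcal{X}_A+\mathcal{X}_{A^c}=1$ by the very definition of the characteristic function. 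Thus $1\in\mathfrak{P}_{\mathcal{F}_1}+\mathfrak{P}_{\mathcal{F}_2}$, and so $\mathfrak{P}_{\mathcal{F}_1}+\mathfrak{P}_{\mathcal{F}_2}=\overline{\mathbb{H}}$, which is the required normality.

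The main obstacle lies in the second step, namely justifying cleanly that the duo correspondence really restricts to a bijection on minimal primes; once the identifications $\mathfrak{I}\leftrightarrow\mathfrak{I}\cap\overline{\mathbb{R}}$ and $\mathfrak{J}\mapsto\overline{\mathbb{H}}(\mathfrak{J})$ are seen to preserve primality in both directions, the rest reduces to the one-line combinatorial identity $\mathcal{X}_A+\mathcal{X}_{A^c}=1$.
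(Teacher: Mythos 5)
Your argument is correct and follows the same overall skeleton as the paper's: identify the minimal primes of $\overline{\mathbb{H}}$ via the duo correspondence as the ideals $\overline{\mathbb{H}}(g_f(\mathcal{F}))$ with $\mathcal{F}\in\mathcal{P}_*(\mathcal{S}_f)$, and then reduce the comaximality question to the corresponding ideals $g_f(\mathcal{F}_1)$, $g_f(\mathcal{F}_2)$ of $\overline{\mathbb{R}}$. Where you genuinely depart from the paper is in the last step. The paper asserts that $\overline{\mathbb{R}}$ is normal ``by similar methods'' to a cited result, and then lifts this to $\overline{\mathbb{H}}$. You instead supply a concrete, self-contained verification: pick $A\in\mathcal{F}_1\setminus\mathcal{F}_2$; the defining property of $\mathcal{P}_*(\mathcal{S}_f)$ forces $A^c\in\mathcal{F}_2$; and $\mathcal{X}_A+\mathcal{X}_{A^c}=1$ already exhibits $1$ in $\mathfrak{P}_{\mathcal{F}_1}+\mathfrak{P}_{\mathcal{F}_2}$. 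This is more elementary and makes the combinatorial content visible, whereas the paper's version outsources it to an external reference and gains brevity. You also verify reducedness of $\overline{\mathbb{H}}$ explicitly, which the paper leaves implicit; the passage from $\sum_\ell x_\ell^2=0$ to each $x_\ell=0$ is best justified by arguing pointwise on representatives ($0\le\hat{x}_\ell(\varphi_\varepsilon)^2\le\sum_m\hat{x}_m(\varphi_\varepsilon)^2$, so each $\hat{x}_\ell^2$ inherits the null estimates), but this is a minor elaboration rather than a gap.
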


\begin{proof} Let $\mathfrak{P}_1$, $\mathfrak{P}_2$ be minimal prime ideals of
  $\overline{\mathbb{H}}$. Then by Theorem \ref{ideal-1}
  $\mathfrak{P}_1=\overline{\mathbb{H}}(g_f(\mathcal{F}_1))$ 
and $\mathfrak{P}_2=\overline{\mathbb{H}}(g_f(\mathcal{F}_2))$. 
By similar methods of (\cite{VH}, Corollary 2.4) we have that 
$\overline{\mathbb{R}}$ is a commutative normal ring and we get that 
$\overline{\mathbb{R}}=g_f(\mathcal{F}_1)+g_f(\mathcal{F}_2)$. 
Thus,
$\overline{\mathbb{H}}=\overline{\mathbb{H}}(g_f(\mathcal{F}_1+\mathcal{F}_2))=
\overline{\mathbb{H}}(g_f(\mathcal{F}_1))+\overline{\mathbb{H}}(g_f(\mathcal{F}_2))$. So,
$\overline{\mathbb{H}}$ is normal. 
\end{proof}

The next definition is well-know, and we can see, for example, in
\cite{SHS-1,SHS-2} and \cite{BF}.

\begin{defn}\label{colom-21} A not necessarily commutative ring $S$ is
right Gelfand if for any right maximal ideals $\mathfrak{M}$ and $\mathfrak{N}$ of $S$ there exists 
$a,b\in S$ such that $a\notin \mathfrak{M}$, $b\notin \mathfrak{N}$ with $aRb=0$. 
Similarly we have left Gelfand rings. Moreover, a ring is 
Gelfand if it is right and left Gelfand. 
\end{defn}

\begin{teo}\label{colom-22}
$\overline{\mathbb{H}}$ is Gelfand.
\end{teo}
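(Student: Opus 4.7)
The plan is to reduce the Gelfand condition in $\overline{\mathbb{H}}$ to a Boolean-algebra separation in $\overline{\mathbb{R}}$, exploiting (i) the duo property, (ii) the classification of maximal ideals from Theorem \ref{ideal-1}, and (iii) the fact that $\overline{\mathbb{R}}$ sits in the centre of $\overline{\mathbb{H}}$. First I would invoke Theorem \ref{colom-16}: since $\overline{\mathbb{H}}$ is duo, every maximal right (or left) ideal is a two-sided maximal ideal, so by Theorem \ref{ideal-1} any two distinct maximal one-sided ideals can be written as $\mathfrak{M}=\overline{\mathbb{H}}(\overline{g_f(\mathcal{F}_1)})$ and $\mathfrak{N}=\overline{\mathbb{H}}(\overline{g_f(\mathcal{F}_2)})$ for some $\mathcal{F}_1,\mathcal{F}_2\in\mathcal{P}_*(\mathcal{S}_f)$; distinctness of $\mathfrak{M}$ and $\mathfrak{N}$ forces $\overline{g_f(\mathcal{F}_1)}\ne\overline{g_f(\mathcal{F}_2)}$, hence $\mathcal{F}_1\ne\mathcal{F}_2$.

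Next I would use the defining stability property of $\mathcal{P}_*(\mathcal{S}_f)$ (for every $A\in\mathcal{S}_f$, either $A$ or $A^c$ lies in $\mathcal{F}$) to pick, without loss of generality, a set $A\in\mathcal{S}_f$ with $A\in\mathcal{F}_1$ and $A\notin\mathcal{F}_2$, so that $A^c\in\mathcal{F}_2$. Set $a:=\mathcal{X}_{A^c}$ and $b:=\mathcal{X}_A$. I would then verify the two non-membership conditions using the pointwise identity $\mathcal{X}_A+\mathcal{X}_{A^c}=1$ in $\overline{\mathbb{R}}$: since $\mathcal{X}_A\in g_f(\mathcal{F}_1)\subseteq\overline{g_f(\mathcal{F}_1)}$ and this last ideal is proper (it is maximal), the complementary idempotent $\mathcal{X}_{A^c}$ cannot also lie in $\overline{g_f(\mathcal{F}_1)}$, for otherwise $1\in\overline{g_f(\mathcal{F}_1)}$. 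Hence $a\notin\mathfrak{M}$, and symmetrically $b\notin\mathfrak{N}$.

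Finally, since $\mathcal{X}_A,\mathcal{X}_{A^c}\in\overline{\mathbb{R}}$ belong to the centre of $\overline{\mathbb{H}}$, the pointwise identity $\mathcal{X}_A\mathcal{X}_{A^c}=0$ yields, for every $s\in\overline{\mathbb{H}}$, the equality $asb=s\mathcal{X}_{A^c}\mathcal{X}_A=0$, i.e.\ $a\overline{\mathbb{H}}b=0$. This is the right-Gelfand condition; because the roles of $A$ and $A^c$ (hence of $a$ and $b$) can be swapped, the same construction simultaneously yields the left-Gelfand condition, so $\overline{\mathbb{H}}$ is Gelfand. There is no deep obstacle: the only subtle point is making sure one extracts a \emph{central} orthogonal pair of idempotents from the Boolean algebra $\mathcal{B}(\overline{\mathbb{R}})=\{\mathcal{X}_A\mid A\in\mathcal{S}_f\}$ that separates the two given maximal ideals, and the classification in Theorem \ref{ideal-1} combined with the $\mathcal{P}_*(\mathcal{S}_f)$ complementation property does exactly this.
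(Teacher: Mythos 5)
Your argument is correct and follows essentially the same route as the paper: both reduce the Gelfand condition for $\overline{\mathbb{H}}$ to $\overline{\mathbb{R}}$ by using the duo property (Theorem \ref{colom-16}) to turn one-sided maximal ideals into two-sided ones, then applying the classification $\mathfrak{M}=\overline{\mathbb{H}}(\overline{g_f(\mathcal{F})})$ from Theorem \ref{ideal-1}, and finally producing central $a,b\in\overline{\mathbb{R}}$ with $a\overline{\mathbb{H}}b=0$. The only difference is that where the paper cites \cite{VH} for the fact that $\overline{\mathbb{R}}$ is Gelfand, you unpack that step explicitly by extracting the separating orthogonal idempotents $\mathcal{X}_A,\mathcal{X}_{A^c}$ from the complementation property of $\mathcal{P}_*(\mathcal{S}_f)$, which makes the proof self-contained but is the same underlying mechanism.
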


\begin{proof} By Theorem \ref{colom-14} we have that all 
right and left ideals are ideals. Thus,  
in this case all right and left maximal ideals are maximal ideals. 
Let $\mathfrak{M}$ and $\mathfrak{N}$  maximal ideals of
$\overline{\mathbb{H}}$. Then by 
Theorem \ref{ideal-1} we have that $\mathfrak{M}=\overline{\mathbb{H}}(\overline{g(\mathcal{F}_1)})$ 
and $\mathfrak{N}=\overline{\mathbb{H}}(\overline{g(\mathcal{F}_2)})$. By similar methods
of (\cite{VH}, Section 2) we have that $\overline{\mathbb{R}}$ 
is Gelfand and we obtain that there exists $a,b\in
\overline{\mathbb{R}}, ~a\notin \overline{g(\mathcal{F}_1)}$ and 
$b\notin \overline{g(\mathcal{F}_2)}$, such that $a\overline{\mathbb{R}}b=0$.  
Hence, $a\notin \mathfrak{M}$ and $b\notin \mathfrak{N}$ such that 
$a\overline{\mathbb{H}}b=0$. Therefore, $\overline{\mathbb{H}}$ is Gelfand.
\end{proof}

According to \cite{WJ} a not necessary commutive ring $S$ 
is right Bezout if all finitely generated ideals are principal. 
Analogously, we define left Bezout. Moreover, a ring is Bezout 
if it is right and left Bezout.

\begin{teo}\label{colom-23}
$\overline{\mathbb{H}}$ is Bezout 
\end{teo}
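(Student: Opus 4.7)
The plan is to leverage the duo property of $\overline{\mathbb{H}}$ (Theorem \ref{colom-16}) together with the convexity of ideals from Proposition \ref{colom-13} to exhibit an explicit principal generator for any finitely generated ideal. Because duo says every finitely generated right or left ideal is already two-sided, it suffices to handle the two-sided case, and the left Bezout statement will then follow by exactly the same argument.

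Given a finitely generated two-sided ideal $\mathfrak{I}=\langle x_1,\ldots,x_m\rangle$ of $\overline{\mathbb{H}}$, I would propose the candidate generator
\[d := n(x_1)+n(x_2)+\cdots+n(x_m)\in\overline{\mathbb{R}}_+\subseteq\overline{\mathbb{H}},\]
and check $\mathfrak{I}=\langle d\rangle$ by two containments. For $\langle d\rangle\subseteq\mathfrak{I}$: each $x_i\in\mathfrak{I}$, so Proposition \ref{colom-13}-$a)$ gives $n(x_i)\in\mathfrak{I}$, and summing yields $d\in\mathfrak{I}$. For $\mathfrak{I}\subseteq\langle d\rangle$: since $d$ is real and non-negative, $n(d)=d$, and since every summand $n(x_j)$ is $q$-positive, the inequality $n(x_i)\leq d=n(d)$ holds for each $i$; Proposition \ref{colom-13}-$b)$ applied to $d\in\langle d\rangle$ then yields $x_i\in\langle d\rangle$, so $\mathfrak{I}\subseteq\langle d\rangle$.

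The argument is essentially mechanical once Theorem \ref{colom-16} and Proposition \ref{colom-13} are available. The only point that really deserves a verification is that the comparison $n(x_i)\leq d$ is a genuine statement in the order of $\overline{\mathbb{R}}$ of Definition \ref{colom-4}, i.e.\ that a finite sum of $q$-positive elements dominates each summand. This is immediate from the definition, since the difference $d-n(x_i)=\sum_{j\neq i}n(x_j)$ is itself a sum of $q$-positive elements and hence $q$-positive. Thus no new analytic input is needed beyond the convexity already proved in Proposition \ref{colom-13}, and the expected main obstacle, namely how to combine several quaternionic generators into a single one, is sidestepped by noting that the norm map $n(\cdot)$ already collapses all four real components and that the duo property makes two-sided considerations sufficient.
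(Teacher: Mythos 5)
Your proof is correct and takes essentially the same approach as the paper: reduce quaternionic generators to their (non-negative real) norms via Proposition \ref{colom-13}-$a)$, observe that the sum $n(x_1)+\cdots+n(x_m)$ dominates each summand in the order of $\overline{\mathbb{R}}$, and apply the convexity statement Proposition \ref{colom-13}-$b)$ to recover each $x_i$ from the principal ideal generated by that sum. You handle $m$ generators at once where the paper does the two-generator case and implicitly inducts, and you are slightly more explicit about invoking the duo property (Theorem \ref{colom-16}) to reduce the right and left cases to the two-sided one, but the underlying mechanism is identical.
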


\begin{proof} We claim that for any $a\in \overline{\mathbb{H}}$, 
we have that $$\langle n(a)\overline{\mathbb{R}}\rangle =
\{x\in\overline{\mathbb{H}}:n(x)\in n(a)\overline{\mathbb{R}}\}=n(a)\overline{\mathbb{H}}.$$ 
In fact, for each $y\in a\overline{\mathbb{H}}$, we get that 
$n(y)\in a\overline{\mathbb{H}}\cap \overline{\mathbb{R}}$. 
Thus, $n(y)=ax$ which implies that $n(y)=n(ax)=n(a)n(x)\in
n(a)\overline{\mathbb{R}}$ and we have that  
$y\in \langle n(a)\overline{\mathbb{R}}\rangle$. 
On the other hand, let $z\in  \langle
n(a)\overline{\mathbb{R}}\rangle$. 
Then, $n(z)\in
n(a)\overline{\mathbb{R}}=a\Theta_{\hat{a}}\overline{\mathbb{R}}\in a
\overline{\mathbb{H}}\cap \overline{\mathbb{R}}$. Hence, $n(z)\in
a\overline{\mathbb{H}}$ 
and by Proposition \ref{colom-13}, $z\in a\overline{\mathbb{H}}$. 
Finally, for each $a,b\in \overline{\mathbb{H}}$ we have that
\begin{eqnarray*} 
a\overline{\mathbb{H}}+b\overline{\mathbb{H}}&=&\langle
n(a)\overline{\mathbb{R}}\rangle + 
\langle n(b) \overline{\mathbb{R}}\rangle\\
&=&\langle n(a)\overline{\mathbb{R}}+n(b)
\overline{\mathbb{R}}\rangle\\
&=&\langle (n(a)+n(b))\overline{\mathbb{R}}\rangle\\
&=&(n(a)+n(b))\overline{\mathbb{H}}
\end{eqnarray*}
which proves our assertion.
\end{proof}

In the next result, we show that all the ideals of
$\overline{\mathbb{H}}$ are idempotent. 

\begin{prop}\label{colom-24}
All the ideals of $\overline{\mathbb{H}}$ are idempotent.
\end{prop}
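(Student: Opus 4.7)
The plan is to reduce the problem via the duo property (Theorem \ref{colom-16}) to the commutative center $\overline{\mathbb{R}}$, and then to exploit the existence of square roots of non-negative elements recalled at the end of Section \ref{sec-2}.

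First, I would observe that by Theorem \ref{colom-16} and the polar-type identity $x=n(x)\,\Theta_{\hat{x}}^{-1}$ used in its proof, every ideal $\mathfrak{I}\lhd\overline{\mathbb{H}}$ has the form $\mathfrak{I}=J\,\overline{\mathbb{H}}$ with $J:=\mathfrak{I}\cap\overline{\mathbb{R}}$. Since $\overline{\mathbb{R}}$ is central in $\overline{\mathbb{H}}$, one gets $\mathfrak{I}^2=J^2\,\overline{\mathbb{H}}$, and intersecting with $\overline{\mathbb{R}}$ one checks that $\mathfrak{I}=\mathfrak{I}^2$ iff $J=J^2$. So the task is reduced to showing that every ideal of the commutative ring $\overline{\mathbb{R}}$ is idempotent.

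Given $r\in J$, Proposition \ref{colom-5} yields $|r|\in J$, so one may assume $r\geq 0$. The square-root result recalled at the end of Section \ref{sec-2} provides $s\in\overline{\mathbb{R}}$ with $s\geq 0$ and $s^2=r$. Since $r=s\cdot s$, it suffices to show $s\in J$; working at the representative level with a non-negative representative $\hat{r}$ of $r$ and $\hat{s}(\varphi_\varepsilon):=\sqrt{\hat{r}(\varphi_\varepsilon)}$, one then combines the support idempotent $\mathcal{X}_{Z(\hat{r})^c}$ furnished by the Fundamental Theorem (Theorem \ref{colom-1}) with a suitable Colombeau unit $\dot{\alpha}_k$ (Theorem \ref{colom-2}) to dominate $s$ by an element visibly lying in $J$; convexity (Proposition \ref{colom-5}, part $3)$) then places $s$ in $J$, giving $r=s\cdot s\in J^2$ and hence $J=J^2$. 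Unwinding the reduction of Step~1 finishes the proof.

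The main obstacle is this last step, namely verifying that $s=\sqrt{r}\in J$. Direct convexity does not suffice: when $r\leq 1$ one has $s\geq r$, so $r$ itself cannot dominate $s$, and the naive bound $s\leq 1+r$ would require $1\in J$. The delicate point is therefore to orchestrate the support idempotent of $r$ together with an appropriate power of the unit $\dot{\alpha}_k$ so as to produce an element of $J$ that dominates $\sqrt{r}$ in the sense of Proposition \ref{colom-5}, part $3)$; this is the heart of the argument.
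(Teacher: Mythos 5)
You take a genuinely different route from the paper. The paper's own argument is three lines: from $a\in\mathfrak{I}$ it gets $n(a)\in\mathfrak{I}$ by Proposition \ref{colom-13}-$a)$, then asserts $n(a)\le n(a)^2\in\mathfrak{I}^2$ and invokes convexity (Proposition \ref{colom-13}-$b)$) to place $n(a)$, hence $a$, in $\mathfrak{I}^2$. You instead reduce via the duo structure to showing that every ideal $J$ of the commutative ring $\overline{\mathbb{R}}$ is idempotent, and try to use $r=s\cdot s$ with $s=\sqrt{r}$. For what it is worth, your suspicion that the convexity one-liner does not immediately work is well placed: the paper's inequality $n(a)\le n(a)^2$ is not valid in general, since for $a=\dot{\alpha}_1$ one has $n(a)-n(a)^2=\dot{\alpha}_1-\dot{\alpha}_2=\dot{\alpha}_1(1-\dot{\alpha}_1)\ge 0$ and nonzero, so the inequality goes the other way.

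That said, your proposal contains a genuine gap that you yourself flag as ``the heart of the argument'': the claim that $s=\sqrt{r}\in J$. The sketch --- combine the support idempotent of $r$ with a power of $\dot{\alpha}_k$ to produce an element of $J$ dominating $\sqrt{r}$ --- does not close it. In the worst case $J=r\,\overline{\mathbb{R}}$, every element of $J$ has the form $r\,t$ with $t\in\overline{\mathbb{R}}$, and $0\le\sqrt{r}\le r\,t$ forces $t\ge 1/\sqrt{\hat{r}}$ wherever $\hat{r}>0$; but $1/\sqrt{\hat{r}}$ need not be bounded by any moderate function (take a representative with $\hat{r}(\varphi_\varepsilon)=\varepsilon^{m(\varphi)}$ for some $m:\mathcal{A}_0(\mathbb{K})\to\mathbb{N}$ unbounded on every $\mathcal{A}_p(\mathbb{K})$), so multiplying by a fixed $\dot{\alpha}_k$ cannot help either. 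The dominating element of $J$ you need does not obviously exist, and as written this is a plan rather than a proof.
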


\begin{proof}  
Let $\mathfrak{I}$ be an ideal of $\overline{\mathbb{H}}$. 
Then we easily have that $\mathfrak{I}^2\subseteq\mathfrak{I}$. 
On the other, let $a\in\mathfrak{I}$. Then  by Proposition
\ref{colom-13} we have that $n(a)\in \mathfrak{I}$. Thus,
$n(a)\leq n(a)^2\in \mathfrak{I}^2$ 
and again by Proposition \ref{colom-13}, $n(a)\in \mathfrak{I}^2$ which implies  by 
Proposition \ref{colom-13}  that $a\in \mathfrak{I}^2$. Hence, 
$\mathfrak{I}\subseteq \mathfrak{I}^2$. So, $\mathfrak{I}=\mathfrak{I}^2$. 
\end{proof}

\begin{obs}\label{colom-25} Using the same techiniques as above, 
we get that all ideals of $\overline{\mathbb{R}}$ are idempotent.
\end{obs}

Now, we introduced the concept of pseudo-prime rings for noncommutative rings.

\begin{defn}\label{colom-26}  Let $S$ be a not necessarily commutative ring and $\mathfrak{I}$ an
  ideal of $S$. We say that $\mathfrak{I}$ is pseudo-prime if for
  ideals $\mathfrak{J}$ and $\mathfrak{K}$ of $S$ such
that $\mathfrak{J}\mathfrak{K}=0$ implies that
$\mathfrak{J}\subseteq\mathfrak{I}$ and $\mathfrak{K}\subseteq\mathfrak{I}$.
\end{defn}

In Definition \ref{colom-26}, if $S$ is commutative, we have the
definition presented in \cite{VH}.

Using the similar methods  of (\cite{VH}, Theorem 4.6) and Remark \ref{colom-25},
we have the following result.

\begin{lema}\label{colom-27} 
Let $\mathfrak{I}$ be an ideal of $\overline{\mathbb{R}}$. Then $\mathfrak{I}$
is pseudo-prime if and only if $\mathfrak{I}$ is prime. 
\end{lema}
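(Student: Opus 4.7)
The plan is to prove the two implications separately, with the forward direction being essentially formal and the reverse requiring a construction that invokes Remark \ref{colom-25}.

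For \emph{prime} $\Rightarrow$ \emph{pseudo-prime}, the argument is immediate. If $\mathfrak{I}$ is prime and $\mathfrak{J}\mathfrak{K}=0$, then in particular $\mathfrak{J}\mathfrak{K}\subseteq \mathfrak{I}$, so by the standard ideal-theoretic characterization of primeness in the commutative ring $\overline{\mathbb{R}}$ one of $\mathfrak{J},\mathfrak{K}$ is contained in $\mathfrak{I}$.

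For \emph{pseudo-prime} $\Rightarrow$ \emph{prime}, suppose $\mathfrak{I}$ is pseudo-prime and pick $a,b\in\overline{\mathbb{R}}$ with $ab\in\mathfrak{I}$; I want to show $a\in\mathfrak{I}$ or $b\in\mathfrak{I}$. The key trick (mirroring \cite{VH}, Theorem 4.6) is to manufacture a literal zero product of ideals from the containment $ab\in\mathfrak{I}$, so that the pseudo-prime hypothesis can be applied. For this, I would use Remark \ref{colom-25}: the principal ideal $(ab)$ is idempotent, so $(ab)^2=(ab)$, and hence there exists $r\in\overline{\mathbb{R}}$ with $ab = r(ab)^2$. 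Setting $e:=rab$, a direct computation gives $e^2 = r^2(ab)^2 = r\cdot r(ab)^2 = r\cdot ab = e$, so $e$ is an idempotent; moreover $e\cdot ab = r(ab)^2 = ab$, which rearranges to the identity $ab(1-e)=0$, i.e.\ $a\cdot[b(1-e)]=0$. Consequently the principal ideals of $\overline{\mathbb{R}}$ satisfy $(a)\cdot\bigl(b(1-e)\bigr)=0$.

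Applying the pseudo-prime property to this zero product yields two cases. If $(a)\subseteq\mathfrak{I}$, then $a\in\mathfrak{I}$ and we are done. If instead $\bigl(b(1-e)\bigr)\subseteq\mathfrak{I}$, I observe that $e=rab\in(ab)\subseteq\mathfrak{I}$, whence $be\in\mathfrak{I}$; combining $b(1-e)=b-be\in\mathfrak{I}$ with $be\in\mathfrak{I}$ gives $b\in\mathfrak{I}$. This establishes the element form of primeness and hence, in the commutative ring $\overline{\mathbb{R}}$, the desired primeness of $\mathfrak{I}$.

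The principal obstacle is the passage from the containment $ab\in\mathfrak{I}$ (which is all we get from the hypothesis) to an actual zero product of ideals (which is all the pseudo-prime condition can feed on). This hinge is precisely supplied by the idempotence of $(ab)$, which is the content of Remark \ref{colom-25}; without it there is no way to produce the complementary idempotent $e$ that witnesses the needed annihilation $a\cdot b(1-e)=0$.
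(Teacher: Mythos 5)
Your proof is correct and follows exactly the route the paper indicates: the paper supplies no explicit proof for Lemma~\ref{colom-27}, stating only that it follows ``using the similar methods of (\cite{VH}, Theorem 4.6) and Remark \ref{colom-25}''. Your argument is a faithful realization of that hint, with the idempotence of $(ab)$ (Remark~\ref{colom-25}) used to manufacture the literal zero product $(a)\cdot\bigl(b(1-e)\bigr)=0$ on which the pseudo-prime hypothesis can act, and the computations ($e=rab$ idempotent, $ab(1-e)=0$, $e\in\mathfrak{I}$ yielding $b\in\mathfrak{I}$ in the second case) all check out. One small reading note: Definition~\ref{colom-26} in the paper writes ``$\mathfrak{J}\subseteq\mathfrak{I}$ and $\mathfrak{K}\subseteq\mathfrak{I}$'', but this must be a typo for ``or'' --- the subsequent use of pseudo-primeness in the proof of Proposition~\ref{colom-28} confirms the disjunctive reading, which is the one you (correctly) adopt.
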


Next, we show that all pseudo-prime ideals of 
$\overline{\mathbb{H}}$ are in fact prime ideals.

\begin{prop}\label{colom-28} 
Let $\mathfrak{I}$ be an ideal of $\overline{\mathbb{H}}$. Then
$\mathfrak{I}$ is prime if and only if it is pseudo-prime.
\end{prop}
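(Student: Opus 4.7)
The forward implication is immediate: if $\mathfrak{I}$ is prime and $\mathfrak{J}\mathfrak{K}=0$ for ideals $\mathfrak{J},\mathfrak{K}$ of $\overline{\mathbb{H}}$, then $\mathfrak{J}\mathfrak{K}\subseteq\mathfrak{I}$ trivially, so primeness gives $\mathfrak{J}\subseteq\mathfrak{I}$ or $\mathfrak{K}\subseteq\mathfrak{I}$. So the real content is the converse, and my plan is to reduce it to the commutative case via Lemma \ref{colom-27}.

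The key tool is the correspondence established in the proof of Theorem \ref{colom-16}: every ideal $\mathfrak{J}$ of $\overline{\mathbb{H}}$ equals $\overline{\mathbb{H}}(\mathfrak{J}\cap\overline{\mathbb{R}})$, and conversely, for any ideal $I$ of $\overline{\mathbb{R}}$, one has $\overline{\mathbb{H}}(I)\cap\overline{\mathbb{R}}=I$. Thus the map $\mathfrak{J}\mapsto \mathfrak{J}\cap\overline{\mathbb{R}}$ is an inclusion-preserving bijection between ideals of $\overline{\mathbb{H}}$ and ideals of $\overline{\mathbb{R}}$. Since $\overline{\mathbb{R}}$ is central in $\overline{\mathbb{H}}$, an easy direct computation shows that this bijection is compatible with products: $\overline{\mathbb{H}}(J)\cdot\overline{\mathbb{H}}(K)=\overline{\mathbb{H}}(JK)$, and consequently $(\mathfrak{J}\mathfrak{K})\cap\overline{\mathbb{R}}=(\mathfrak{J}\cap\overline{\mathbb{R}})(\mathfrak{K}\cap\overline{\mathbb{R}})$.

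Assume now that $\mathfrak{I}$ is pseudo-prime in $\overline{\mathbb{H}}$ and set $I:=\mathfrak{I}\cap\overline{\mathbb{R}}$. I first verify that $I$ is pseudo-prime in $\overline{\mathbb{R}}$: if $J,K\lhd\overline{\mathbb{R}}$ with $JK=0$, lift them to $\mathfrak{J}=\overline{\mathbb{H}}(J)$ and $\mathfrak{K}=\overline{\mathbb{H}}(K)$, so that $\mathfrak{J}\mathfrak{K}=\overline{\mathbb{H}}(JK)=0$. Pseudo-primeness of $\mathfrak{I}$ yields $\mathfrak{J}\subseteq\mathfrak{I}$ or $\mathfrak{K}\subseteq\mathfrak{I}$, and intersecting with $\overline{\mathbb{R}}$ gives $J\subseteq I$ or $K\subseteq I$. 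By Lemma \ref{colom-27}, $I$ is prime in $\overline{\mathbb{R}}$. To conclude that $\mathfrak{I}$ itself is prime, take ideals $\mathfrak{J},\mathfrak{K}$ of $\overline{\mathbb{H}}$ with $\mathfrak{J}\mathfrak{K}\subseteq\mathfrak{I}$; setting $J=\mathfrak{J}\cap\overline{\mathbb{R}}$ and $K=\mathfrak{K}\cap\overline{\mathbb{R}}$, the compatibility of the correspondence with products yields $JK\subseteq I$, hence $J\subseteq I$ or $K\subseteq I$, which lifts back to $\mathfrak{J}\subseteq\mathfrak{I}$ or $\mathfrak{K}\subseteq\mathfrak{I}$.

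The only step requiring some care is the product-compatibility of the correspondence. Routine verification using the basis expansion $x=x_0+x_1 i+x_2 j+x_3 k$ and the centrality of $\overline{\mathbb{R}}$ handles this, but one must make sure to pick the right definition of the generated ideal (namely $\overline{\mathbb{H}}(J)=J\oplus Ji\oplus Jj\oplus Jk$, as in Theorem \ref{ideal-1}). Once this is in hand, the whole argument is essentially transport of structure from Lemma \ref{colom-27}, and no further quaternionic machinery is needed.
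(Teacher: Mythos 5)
Your proof is correct and follows essentially the same route as the paper: both arguments reduce to $\overline{\mathbb{R}}$ via the inclusion-preserving correspondence $\mathfrak{J}\leftrightarrow\mathfrak{J}\cap\overline{\mathbb{R}}$ established in Theorem~\ref{colom-16}, invoke Lemma~\ref{colom-27} to pass from pseudo-prime to prime in $\overline{\mathbb{R}}$, and then transport the conclusion back. The only stylistic difference is that you make the product-compatibility $\overline{\mathbb{H}}(J)\overline{\mathbb{H}}(K)=\overline{\mathbb{H}}(JK)$ explicit, whereas the paper leaves it implicit by working with the elementwise identity $a\overline{\mathbb{H}}b\overline{\mathbb{H}}=ab\overline{\mathbb{H}}$ and the $n(\cdot)$-characterization of $\langle\mathfrak{J}\cap\overline{\mathbb{R}}\rangle$.
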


\begin{proof}  Suppose that $\mathfrak{I}$ is pseudo prime. 
We claim that $\mathfrak{I}\cap
  \overline{\mathbb{R}}$ is pseudo-prime. In fact, let $a,b\in
  \overline{\mathbb{R}}$ such that $ab=0$. Then 
$a\overline{\mathbb{H}}b\overline{\mathbb{H}}=ab\overline{\mathbb{H}}=0$. 
By assumption, we have that $a\overline{\mathbb{H}}\subseteq \mathfrak{I}$ or
$b\overline{\mathbb{H}}\subseteq \mathfrak{I}$ 
and it follows that $a\in \mathfrak{I}\cap \overline{R}$ or 
$b\in \mathfrak{I}\cap  \overline{\mathbb{R}}$. 
Thus, by Lemma \ref{colom-27} $\mathfrak{I}\cap \overline{\mathbb{R}}$ is prime.

Next, we show that $\mathfrak{I}$ is prime. 
In fact, let $\mathfrak{J}$ and $\mathfrak{K}$ be ideals of
$\overline{\mathbb{H}}$ such that $\mathfrak{J}\mathfrak{K}\subseteq
\mathfrak{I}$. Then
$(\mathfrak{J}\cap\overline{\mathbb{R}})(\mathfrak{K}\cap
\overline{\mathbb{R}}) \subseteq \mathfrak{I}\cap
\overline{\mathbb{R}}$. Thus, we have that 
$(\mathfrak{J}\cap \overline{\mathbb{R}})\subseteq \mathfrak{I}$ 
or $(\mathfrak{K}\cap \overline{\mathbb{R}})\subseteq   \mathfrak{I}$.  
So, $\langle \mathfrak{J}\cap \overline{\mathbb{R}} \rangle=\{a\in
\overline{\mathbb{H}}:n(a)\in \mathfrak{J}\cap
\overline{\mathbb{R}}\}=\mathfrak{J}\subseteq \mathfrak{I}$ 
or $\langle \mathfrak{K}\cap \overline{\mathbb{R}}\rangle=\{a\in
\overline{\mathbb{H}}: n(a)\in \mathfrak{K}\cap
\overline{\mathbb{R}}\} 
=\mathfrak{K}\subseteq \mathfrak{I}$.
Hence, $\mathfrak{I}$ is prime.
\end{proof}

The next  definitions appears in (\cite{LAM-2}, Definitions 4.10.3 and 4.10.6).

\begin{defn}\label{colom-29}  
\begin{enumerate}
\item[$i)$] Let $S$ be a ring. A nonempty set $M\subseteq S$ is
  called an $m$-system if, 
for any $a,b\in M$ there exists $s\in S$ such that $asb\in M$. 
\item[$ii)$] Let $S$ be a ring. For an ideal $\mathfrak{U}$ of $S$, the radical of $\mathfrak{U}$
is  $$\sqrt{\mathfrak{U}}=\{s\in S:\mbox{every} ~m-\mbox{system}
~\mbox{containing} ~s ~\mbox{meets} ~\mathfrak{U}\}.$$
\end{enumerate} 
\end {defn}

An ideal $\mathfrak{U}$ of $\overline{\mathbb{H}}$ is radical of $\mathfrak{U}=\sqrt{\mathfrak{U}}$. 
In the next result, we show that all the ideals of $\overline{\mathbb{H}}$ 
and $\overline{\mathbb{R}}$ are radicals.

\begin{prop}\label{colom-30} 
\begin{enumerate}
\item[$i)$] All the ideals of $\overline{\mathbb{H}}$ are radicals;
 \item[$ii)$] All the ideals of $\overline{\mathbb{R}}$ are radicals. 
\end{enumerate}
\end{prop}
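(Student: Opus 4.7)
The plan is to deduce the statement directly from Proposition \ref{colom-24} (for part $i)$) and Remark \ref{colom-25} (for part $ii)$), both of which assert that every ideal in the ring in question is idempotent. The bridge to the radical is a standard fact in noncommutative ring theory (see \cite{LAM-2}, Theorem 10.7): the set $\sqrt{\mathfrak{U}}$ defined via $m$-systems equals the intersection of all prime ideals containing $\mathfrak{U}$. Equivalently, $\mathfrak{U}=\sqrt{\mathfrak{U}}$ if and only if the quotient ring $S/\mathfrak{U}$ is semiprime, i.e.\ contains no nonzero nilpotent ideal.

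For part $i)$, I would fix an ideal $\mathfrak{I}\lhd\overline{\mathbb{H}}$ and analyse $\overline{\mathbb{H}}/\mathfrak{I}$. By the lattice correspondence, every ideal of $\overline{\mathbb{H}}/\mathfrak{I}$ is of the form $\mathfrak{J}/\mathfrak{I}$ for some ideal $\mathfrak{J}\lhd\overline{\mathbb{H}}$ with $\mathfrak{I}\subseteq\mathfrak{J}$. Proposition \ref{colom-24} yields $\mathfrak{J}^2=\mathfrak{J}$, hence $(\mathfrak{J}/\mathfrak{I})^2=\mathfrak{J}/\mathfrak{I}$. Thus every ideal of $\overline{\mathbb{H}}/\mathfrak{I}$ is idempotent. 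In particular, if $\overline{\mathfrak{K}}\lhd\overline{\mathbb{H}}/\mathfrak{I}$ is nilpotent with $\overline{\mathfrak{K}}^n=0$, then by idempotence $\overline{\mathfrak{K}}=\overline{\mathfrak{K}}^n=0$. Therefore $\overline{\mathbb{H}}/\mathfrak{I}$ is semiprime, which by the characterization above gives $\mathfrak{I}=\sqrt{\mathfrak{I}}$.

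Part $ii)$ is proved by the identical argument applied to $\overline{\mathbb{R}}$, invoking Remark \ref{colom-25} in place of Proposition \ref{colom-24}. (One might also observe that commutativity of $\overline{\mathbb{R}}$ allows one to replace the $m$-system formulation by the classical $\sqrt{\mathfrak{U}}=\{x:x^n\in\mathfrak{U}\}$, after which idempotence of principal ideals combined with Proposition \ref{colom-13} gives the conclusion even more directly.)

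The only step requiring real care is invoking the equivalence ``$\mathfrak{U}=\sqrt{\mathfrak{U}}$ iff $S/\mathfrak{U}$ is semiprime'' in the noncommutative setting; everything else is essentially a one-line consequence of idempotence. If a self-contained argument is preferred, one can instead argue directly with $m$-systems: if $a\in\sqrt{\mathfrak{I}}\setminus\mathfrak{I}$, then by Theorem \ref{colom-16} the two-sided principal ideal $\langle a\rangle$ is idempotent, so $\langle a\rangle^k=\langle a\rangle\not\subseteq\mathfrak{I}$ for every $k$, producing an $m$-system containing $a$ that avoids $\mathfrak{I}$, a contradiction. Either way the proof is short, and the main obstacle is simply stating the noncommutative radical-versus-semiprime dictionary correctly.
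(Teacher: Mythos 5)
Your main argument is correct and takes a genuinely different route from the paper's. The paper proves part $i)$ directly: given $a\in\sqrt{\mathfrak{I}}$, it observes that $\{1,a,a^2,\dots\}$ is an $m$-system containing $a$ and hence meets $\mathfrak{I}$, so $a^n\in\mathfrak{I}$ for some $n$; it then applies Proposition~\ref{colom-13} to get $n(a)^n=n(a^n)\in\mathfrak{I}$, uses the inequality $n(a)\le n(a)^n$ together with convexity to conclude $n(a)\in\mathfrak{I}$, and pulls back to $a\in\mathfrak{I}$ via Proposition~\ref{colom-13}$(a)$. You instead bypass the norm and convexity entirely by routing through Proposition~\ref{colom-24} and the standard dictionary $\sqrt{\mathfrak{U}}=\bigcap\{\mathfrak{P}\supseteq\mathfrak{U}:\mathfrak{P}\ \text{prime}\}$, so that $\mathfrak{I}=\sqrt{\mathfrak{I}}$ is equivalent to semiprimeness of the quotient; since every ideal of $\overline{\mathbb{H}}/\mathfrak{I}$ inherits idempotence from Proposition~\ref{colom-24} (using $(\mathfrak{J}/\mathfrak{I})^2=(\mathfrak{J}^2+\mathfrak{I})/\mathfrak{I}=\mathfrak{J}/\mathfrak{I}$), the quotient has no nonzero nilpotent ideal and is semiprime. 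This is shorter and more structural: it makes transparent that ``all ideals idempotent'' is the real engine, and it transfers to any ring with that property without reference to the quaternionic norm. The cost is an external appeal to the noncommutative radical theory (Lam, Theorem~10.7), whereas the paper's argument, while relying on the same convexity machinery underlying Proposition~\ref{colom-24}, is entirely self-contained at this point.

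One caveat: your parenthetical ``self-contained'' alternative at the end is too loose as stated. From $a\notin\mathfrak{I}$ and $\langle a\rangle^k=\langle a\rangle\not\subseteq\mathfrak{I}$ it does not immediately follow that there is an $m$-system containing $a$ and disjoint from $\mathfrak{I}$; an $m$-system is a condition on pairs of elements, and producing one requires an inductive choice (starting from $a\in a\overline{\mathbb{H}}a$, pick $s_1$ with $as_1a\notin\mathfrak{I}$, then iterate), which is essentially the construction in Lam's Lemma~10.4. The idea can be made rigorous, but not in the one clause you give. Since this is only offered as an aside and your primary argument via semiprimeness is sound, this does not affect the overall correctness of the proposal.
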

 
\begin{proof} $i)$: Let $\mathfrak{I}$ be an ideal of $\overline{\mathbb{H}}$ and
   $x\in \sqrt{\mathfrak{I}}$. 
Then there exists $n\geq 1$ such that $a^n\in \mathfrak{I}$, since the set
$\{1,a^2,...,a^n..\}$ is a $m$-system. Then by Proposition \ref{colom-13}
$n(a)^n\in \mathfrak{I}$ and note that $n(a)\leq n(a)^n\in \mathfrak{I}$. 
Hence, by Proposition \ref{colom-13} we have that $n(a)\in \mathfrak{I}$.
So, $a\in \mathfrak{I}$ and it follows that $\mathfrak{I}=\sqrt{\mathfrak{I}}$.
 
$ii)$: The proof follows the similar methods of $i)$. 
\end{proof}
 
In the next results, we characterize the essential ideals\footnote{A
  closed ideal $\mathfrak{I}$ in a $C^*$-algebra $A$ is called essential 
if $\mathfrak{I}$ has nonzero intersection with every other nonzero closed ideal $A$
or, equivalently, if $a\mathfrak{I}=\{0\}$ implies $a=0$ for all $a\in
A$ (Raeburn and Williams 1998).

In mathematics, specifically module theory, given a ring $R$ and
$R$-modules $M$ with a submodule $N$, the module $M$ is said to be 
an essential extension of $N$ (or $N$ is said to be an essential
submodule or large submodule of $M$) if for every submodule $H$ of $M$,
$H\cap N=\{0\}$ implies that $H=\{0\}$. As a special case, an
essential left ideal of $R$ is a left ideal which is essential as a 
submodule of the left module $R_R$. The left ideal has non-zero
intersection with any non-zero left ideal of $R$. Analogously, the
essential right ideal is exactly an essential submodule of the right $R$ module $R_R$} of
$\overline{\mathbb{R}}$ and $\overline{\mathbb{H}}$. We shall use
the notation of \cite{BMW}.

\begin{lema}\label{ideal-5}
Let $\mathfrak{I}$ be an ideal of $\overline{\mathbb{R}}$. Then
$r_{\overline{\mathbb{R}}}(\mathfrak{I})\ne \{0\}$ if and only if
there exists an idempotent $e\in\mathcal{B}(\overline{\mathbb{R}})$,
such that $\mathfrak{I}\subseteq\overline{\mathbb{R}}
e$. Equivalently, $\mathfrak{I}$ is essential if and only if it is
not contained in a principal idempotent ideal. Moreover, if
$r_{\overline{\mathbb{R}}}(\mathfrak{I})\ne \{0\}$, then
$\mathcal{B}(\overline{\mathbb{R}})\cap
r_{\overline{\mathbb{R}}}(\mathfrak{I})\ne \{0\}$.  
\end{lema}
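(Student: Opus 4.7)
The plan is to prove the three formulations as a single cycle. Since $\mathcal{B}(\overline{\mathbb{R}})$ is closed under the complementation $e \mapsto 1-e$, the existence of a proper idempotent $e$ with $\mathfrak{I} \subseteq \overline{\mathbb{R}} e$ is equivalent to the existence of a nonzero idempotent $f = 1-e$ inside $r_{\overline{\mathbb{R}}}(\mathfrak{I})$. Hence the main equivalence and the ``moreover'' clause both reduce to the single claim
\[
r_{\overline{\mathbb{R}}}(\mathfrak{I}) \ne \{0\} \iff \mathcal{B}(\overline{\mathbb{R}}) \cap r_{\overline{\mathbb{R}}}(\mathfrak{I}) \ne \{0\}.
\]
The $(\Leftarrow)$ direction is immediate; and once a nonzero idempotent $f$ in $r_{\overline{\mathbb{R}}}(\mathfrak{I})$ is in hand, every $x \in \mathfrak{I}$ satisfies $x = x(1-f) \in \overline{\mathbb{R}}(1-f)$, so setting $e := 1-f$ gives the containment, and the essential-ideal reformulation is its contrapositive.

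For the forward direction I pick $y \in r_{\overline{\mathbb{R}}}(\mathfrak{I}) \setminus \{0\}$. The first step is to replace $y$ by $|y|$: by convexity of ideals (Proposition \ref{colom-5}(1)) the absolute value $|y|$ still lies in the ideal $r_{\overline{\mathbb{R}}}(\mathfrak{I})$, so we may assume $y \geq 0$. The second step is to extract a quantitative lower bound on a representative of $y$. Since $y \ne 0$, the negation of the null condition in Definition \ref{colom-6} yields a set $A \in \mathcal{S}_f$, a real $r$ and a representative $\hat{y}$ satisfying $\hat{y}(\varphi_\varepsilon) \geq \dot{\alpha}_r(\varphi_\varepsilon)$ for $\varphi \in A \cap \mathcal{A}_p(\mathbb{R})$ and $\varepsilon$ small; this is the same style of extraction used in the Fundamental Theorem (Theorem \ref{colom-1}) and in Theorem \ref{colom-2}. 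By the description $\mathcal{B}(\overline{\mathbb{R}}) = \{\mathcal{X}_A : A \in \mathcal{S}_f\}$ recalled in Section \ref{sec-2}, the candidate $f := \mathcal{X}_A$ is a nonzero element of $\mathcal{B}(\overline{\mathbb{R}})$.

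The main obstacle is to verify that this $f$ truly annihilates $\mathfrak{I}$. For any $x \in \mathfrak{I}$, the identity $xy = 0$ states that $\hat{x}\hat{y}$ is a null net; combined with $\hat{y} \geq \dot{\alpha}_r$ on $A$, dividing by $\hat{y}$ lets one conclude that $\hat{x}\,\mathcal{X}_A$ is also null, i.e.\ $xf = 0$. The delicate part is precisely this division: the definition of null in the full setting involves a strictly increasing sequence $\gamma \in \Gamma$, and one must shift $\gamma$ by the fixed loss $r$ (and shift $p$ accordingly) while checking that the shifted sequence still belongs to $\Gamma$ and still diverges. This bookkeeping is what distinguishes the full-generalized setting from the much cleaner simplified case handled in \cite{CFJ} and is, in my view, where the real work of the lemma lives.

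Once such an $f \in \mathcal{B}(\overline{\mathbb{R}}) \cap r_{\overline{\mathbb{R}}}(\mathfrak{I})$ is produced, the idempotent $e := 1 - f$ is nontrivial and yields $\mathfrak{I} \subseteq \overline{\mathbb{R}} e$, closing the cycle. The ``moreover'' statement is witnessed by $f$ itself, and the essential-ideal reformulation follows by negating both sides of the equivalence already proved.
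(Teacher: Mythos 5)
Your reduction to the single equivalence $r_{\overline{\mathbb{R}}}(\mathfrak{I})\ne\{0\}\iff\mathcal{B}(\overline{\mathbb{R}})\cap r_{\overline{\mathbb{R}}}(\mathfrak{I})\ne\{0\}$, the $(\Leftarrow)$ direction, the passage to $|y|$ via Proposition \ref{colom-5}, and the closing of the cycle via $e:=1-f$ are all fine. The gap is in the extraction step. You assert that $y\ne 0$ ``yields'' a set $A\in\mathcal{S}_f$, an $r\in\mathbb{R}$ and a representative $\hat{y}$ with $\hat{y}(\varphi_\varepsilon)\ge\dot{\alpha}_r(\varphi_\varepsilon)$ for $\varphi\in A$ and $\varepsilon$ small, citing the style of Theorems \ref{colom-1} and \ref{colom-2}. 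Neither result supports this. Theorem \ref{colom-2} gives a \emph{global} $\dot{\alpha}_r$-lower bound only for \emph{units}; Theorem \ref{colom-1} gives, for a nonzero non-unit, an idempotent $\mathcal{X}_A$ with $y\mathcal{X}_A=0$, which says nothing about $y$ being bounded below on $A^c$ --- a nonzero generalized number need not be bounded below by any $\dot{\alpha}_r$ on any set in $\mathcal{S}_f$. So the idempotent you propose is simply not produced. Your remark that ``the real work of the lemma lives'' in the bookkeeping of shifting $\gamma\in\Gamma$ is therefore misplaced: once a uniform lower bound is in hand, dividing is routine; the nontrivial point is obtaining such a bound at all, and this step is missing.

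The paper's proof avoids the issue entirely. It picks $0\ne x\in r_{\overline{\mathbb{R}}}(\mathfrak{I})$, observes that $x$ is a zero divisor and applies the Fundamental Theorem to obtain $A\in\mathcal{S}_f$ with $x\mathcal{X}_A=0$, equivalently $x\mathcal{X}_{A^c}=x$, and then proposes $\mathcal{X}_{A^c}$ as the candidate idempotent in the annihilator. The verification that $\mathcal{X}_{A^c}y=0$ for every $y\in\mathfrak{I}$ proceeds by contradiction through the valuation $V$: assuming $V(\mathcal{X}_{A^c}\hat{y})=a<\infty$, the nullity of $\hat{x}\mathcal{X}_{A^c}\hat{y}$ is used to force $A(\hat{x}\mathcal{X}_{A^c})=\mathbb{R}$, hence $V(\hat{x}\mathcal{X}_{A^c})=\infty$, hence $x\mathcal{X}_{A^c}=0$, contradicting $x\mathcal{X}_{A^c}=x\ne 0$. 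No lower bound of $x$ on any set is used anywhere; the only inputs are $x\ne 0$ and the Fundamental Theorem. To rescue your route you would first have to establish, as a separate lemma, the decomposition of a nonzero $y$ into a part bounded below by some $\dot{\alpha}_r$ on a set in $\mathcal{S}_f$ and a null remainder; that is not in the paper and is not obvious in the full setting.
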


\begin{proof}
$(\Leftarrow)$: If there exists an idempotent
$e\in\mathcal{B}(\overline{\mathbb{R}})$, such that
$\mathfrak{I}\subseteq \overline{\mathbb{R}}e$, then $(1-e)\in
r_{\overline{\mathbb{R}}}(\mathfrak{I})$, and we get ,
$r_{\overline{\mathbb{R}}}(\mathfrak{I})\ne \{0\}$.

$(\Rightarrow)$: Conversely, if $0\neq x\in
r_{\overline{\mathbb{R}}}(J)$, then $x$ must be a zero divisor and it
  follows that there exists $A\in\mathcal{S}_f$, such that
  $x\mathcal{X}_A=0$ or equivalently $x\mathcal{X}_{A^c}=x$. We claim
  that $\mathcal{X}_{A^c}\in
  r_{\overline{\mathbb{R}}}(\mathfrak{I})$. Indeed, for any
  $y\in\mathfrak{I}$, we have that $xy=0\Rightarrow
  (x\mathcal{X}_{A^c})y=0\Rightarrow x\mathcal{X}_{A^c}y=0 $. So, if
  we choose representatives, we have that
  $\hat{x}(1-\mathcal{X}_A)\hat{y}\in\mathcal{N}(\mathbb{R})$. We
  will show that
  $(1-\mathcal{X}_A)\hat{y}=\mathcal{X}_{A^c}\hat{y}\in\mathcal{N}_f(\mathbb{R})$. In
  fact, if
  $\mathcal{X}_{A^c}\hat{y}\notin\mathcal{N}_f(\mathbb{R})$. then
  $V(\mathcal{X}_{A^c}\hat{y})<\infty$, i.e., there exists
  $a\in\mathbb{R}^+$, such that
  $V(\mathcal{X}_{A^c}\hat{y})=\sup(A(\mathcal{X}_{A^c}\hat{y}))=a$,
  and consequently there exists $p\in\mathbb{N}$, such
  that $$\lim\limits_{\varepsilon\downarrow
    0}\frac{\mathcal{X}_{A^c}(\varphi_\varepsilon)\hat{y}(\varphi_\varepsilon)}{\varepsilon^a}\ne
  0.$$ However, $$0=\lim\limits_{\varepsilon\downarrow
    0}\frac{\hat{x}(\varphi_\varepsilon)\mathcal{X}_{A^c}(\varphi_\varepsilon)\hat{y}(\varphi_\varepsilon)}
{\varepsilon^{a+b}}=\lim\limits_{\varepsilon\downarrow
    0}\frac{\hat{x}(\varphi_\varepsilon)\mathcal{X}_{A^c}(\varphi_\varepsilon)}{\varepsilon^b}\lim\limits_{\varepsilon\downarrow
    0}\frac{\mathcal{X}_{A^c}(\varphi_\varepsilon)\hat{y}(\varphi_\varepsilon)}{\varepsilon^a},$$
  which implies that $$\lim\limits_{\varepsilon\downarrow
    0}\frac{\hat{x}(\varphi_\varepsilon)\mathcal{X}_{A^c}(\varphi_\varepsilon)}{\varepsilon^b}=0,
  ~\forall~b\in\mathbb{R},$$
and it follows that
  $A(\hat{x}\mathcal{X}_{A^c})=\mathbb{R}\Rightarrow
  V(\hat{x}\mathcal{X}_{A^c})=+\infty\Rightarrow
  \hat{x}\mathcal{X}_{A^c}\in\mathcal{N}_f(\mathbb{R}).$ So,
  $x\mathcal{X}_{A^c}=0$,  we have that $x=x\mathcal{X}_A+x\mathcal{X}_{A^c}=0$,
  which is a contradiction. Therefore,
  $\mathcal{X}_{A^c}\hat{y}\in\mathcal{N}_f(\mathbb{R})$ and it follows
  that $\mathcal{X}_{A^c}y=0, ~\forall~y\in\mathfrak{I}$, and we have that
  $\mathcal{X}_{A^c}\in r_{\overline{\mathbb{R}}}(\mathfrak{I})$. Using this, we have
  for $y\in\mathfrak{I}$ that
  $y=y\mathcal{X}_A+y\mathcal{X}_{A^c}=y\mathcal{X}_A\in
\overline{\mathbb{R}}\mathcal{X}_A$.
\end{proof}

Lemma \ref{ideal-6}, Lemma \ref{ideal-7} and Lemma \ref{ideal-8} below, which
extends the results of (\cite{CFJ}, Lemma 4.7, Lemma 4.8 and Lemma 4.9)
have their proves with similar techniques of \cite{CFJ}, and their
proves will be omitted here.

\begin{lema}\label{ideal-6}
Any  proper finitely generated ideal of $\overline{\mathbb{K}}$
 is contained in a principal idempontent ideal. Hence, it is
not essential. In particular, essential ideals are not finitely generated.  
\end{lema}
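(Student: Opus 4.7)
The plan is to reduce the problem to producing a single nonzero idempotent $e\in\overline{\mathbb{K}}$ that annihilates every generator of $\mathfrak{I}$; once such an $e$ is in hand, the trivial decomposition $x=x(1-e)+xe=x(1-e)$ for $x\in\mathfrak{I}$ shows that $\mathfrak{I}\subseteq (1-e)\overline{\mathbb{K}}$, which is a (proper, since $e\neq 0$) principal idempotent ideal. To produce $e$, I would write $\mathfrak{I}=\langle x_1,\dots,x_n\rangle$ and, using Proposition \ref{colom-5}$(1)$ to pass from $x_i\in\mathfrak{I}$ to $|x_i|\in\mathfrak{I}$, consolidate the generators into the single non-negative element $y:=|x_1|+\cdots+|x_n|\in\mathfrak{I}\cap\overline{\mathbb{R}}_+$. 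Because $\mathfrak{I}$ is proper, $y$ cannot be a unit.

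The argument then splits on whether $y=0$ or not. If $y=0$, the ordering on $\overline{\mathbb{R}}$ forces $|x_i|=0$ and hence $x_i=0$ for each $i$, so $\mathfrak{I}=\{0\}$, which is trivially contained in the principal idempotent ideal generated by $0$. If $y\neq 0$, then $y$ is a nonzero non-unit, and Theorem \ref{colom-1}$(ii)$ supplies a nonzero idempotent $e\in\overline{\mathbb{K}}$ with $ye=0$. Since $\mathcal{B}(\overline{\mathbb{K}})=\{\mathcal{X}_A:A\in\mathcal{S}_f\}$, we have $e\geq 0$, so multiplying the inequality $0\leq |x_i|\leq y$ by $e$ yields $0\leq |x_i|e\leq ye=0$. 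Proposition \ref{colom-5}$(3)$ (applicable because $\mathbb{K}=\mathbb{R}$ throughout Section \ref{sec-4}) then forces $|x_i|e=0$, and since $|x_ie|=|x_i|\,|e|=|x_i|e=0$, I conclude $x_ie=0$ for each $i$, as desired.

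The second assertion is then immediate from Lemma \ref{ideal-5}: the nonzero element $e\in r_{\overline{\mathbb{K}}}(\mathfrak{I})$ witnesses $r_{\overline{\mathbb{K}}}(\mathfrak{I})\neq\{0\}$, so $\mathfrak{I}$ is not essential; contrapositively, essential ideals cannot be finitely generated.

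I expect the main technical point to be the step that turns $0\leq|x_i|e\leq ye=0$ into the equality $|x_i|e=0$, which rests squarely on the order-convexity of ideals in $\overline{\mathbb{R}}$ (Proposition \ref{colom-5}$(3)$) together with the non-negativity of the Boolean-algebra idempotents $\mathcal{X}_A$. The degenerate case $y=0$, in which one must pass from the vanishing of a sum of non-negatives to the vanishing of each summand, is the other point where the order-theoretic structure of $\overline{\mathbb{R}}$ is used in an essential way; both are clean once isolated.
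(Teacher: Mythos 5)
Your proof is correct, and since the paper omits the proof of this lemma (deferring to the techniques of \cite{CFJ}), your reconstruction is essentially the canonical argument: consolidate the generators into the nonnegative non-unit $y=|x_1|+\cdots+|x_n|\in\mathfrak{I}\cap\overline{\mathbb{R}}_+$, extract a nonzero annihilating idempotent $e=\mathcal{X}_A$ via Theorem \ref{colom-1}$(ii)$, use nonnegativity of $e$ together with Proposition \ref{colom-5} to propagate $ye=0$ to $x_ie=0$, and conclude $\mathfrak{I}\subseteq(1-e)\overline{\mathbb{K}}$, with Lemma \ref{ideal-5} then delivering non-essentiality. The one step worth making fully explicit is the compatibility of the order with multiplication by the nonnegative idempotent $e$ in passing from $|x_i|\le y$ to $|x_i|e\le ye$, but this is immediate at the representative level since $\mathcal{X}_A$ takes values in $\{0,1\}$, so there is no gap.
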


\begin{lema}\label{ideal-7}
Let $\mathfrak{I}$ be an ideal of $\overline{\mathbb{H}}$. Then
$\mathfrak{I}\in\mathcal{D}(\overline{\mathbb{H}})$ iff 
$n(\mathfrak{I})\in\mathcal{D}(\overline{\mathbb{H}})$, where
$\mathcal{D}(\overline{\mathbb{H}})$ 
is the set of all essential ideals of $\mathbb{\overline{H}}$. 
\end{lema}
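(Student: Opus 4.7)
The plan is to exploit the tight correspondence between ideals of $\overline{\mathbb{H}}$ and ideals of $\overline{\mathbb{R}}$ implicitly developed in Theorem \ref{colom-16} and Proposition \ref{colom-13}, together with the fact (from Proposition \ref{colom-30}) that all ideals of $\overline{\mathbb{R}}$ are radical. Before tackling the equivalence I would establish two preliminary observations. First, for any $x\in\overline{\mathbb{H}}$, $x=0$ if and only if $n(x)=0$, because at the representative level $|\hat{x}(\varphi_\varepsilon)|=n(\hat{x})(\varphi_\varepsilon)$, so $\hat{x}$ is null exactly when $n(\hat{x})$ is null. Second, for any ideal $\mathfrak{I}$ of $\overline{\mathbb{H}}$, the identity $n(\mathfrak{I})=\mathfrak{I}\cap\overline{\mathbb{R}}$ holds: the inclusion $\subseteq$ is immediate from Proposition \ref{colom-13}$(a)$, while the reverse uses Proposition \ref{colom-5} applied to $|r|=n(r)$ for $r\in\overline{\mathbb{R}}$. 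In particular, $n(\mathfrak{I})\subseteq\mathfrak{I}$ as subsets of $\overline{\mathbb{H}}$.

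For the direction $(\Rightarrow)$, assume $\mathfrak{I}\in\mathcal{D}(\overline{\mathbb{H}})$ and let $\mathfrak{K}$ be any nonzero ideal of $\overline{\mathbb{R}}$. I would form the nonzero ideal $\mathfrak{J}:=\overline{\mathbb{H}}(\mathfrak{K})$ of $\overline{\mathbb{H}}$ and pick $0\neq x\in\mathfrak{I}\cap\mathfrak{J}$. By the preliminary observation $n(x)\neq 0$, and $n(x)\in n(\mathfrak{I})$ by definition. Writing $x=k_0+k_1i+k_2j+k_3k$ with $k_i\in\mathfrak{K}$ yields $n(x)^2=k_0^2+k_1^2+k_2^2+k_3^2\in\mathfrak{K}$, and since $\mathfrak{K}$ is radical by Proposition \ref{colom-30}, $n(x)\in\mathfrak{K}$. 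Thus $n(\mathfrak{I})\cap\mathfrak{K}\ni n(x)\neq 0$, giving $n(\mathfrak{I})\in\mathcal{D}(\overline{\mathbb{R}})$.

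For the direction $(\Leftarrow)$, assume $n(\mathfrak{I})\in\mathcal{D}(\overline{\mathbb{R}})$ and let $\mathfrak{J}$ be a nonzero ideal of $\overline{\mathbb{H}}$. Then $n(\mathfrak{J})$ is a nonzero ideal of $\overline{\mathbb{R}}$ (for any nonzero $x\in\mathfrak{J}$, $n(x)\in n(\mathfrak{J})$ is nonzero). By hypothesis pick $0\neq r\in n(\mathfrak{I})\cap n(\mathfrak{J})$. Using $n(\mathfrak{I})\subseteq\mathfrak{I}$ and $n(\mathfrak{J})\subseteq\mathfrak{J}$, we conclude $r\in\mathfrak{I}\cap\mathfrak{J}$, so $\mathfrak{I}\in\mathcal{D}(\overline{\mathbb{H}})$.

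The main (and only real) obstacle is the bookkeeping for the identification $n(\mathfrak{I})=\mathfrak{I}\cap\overline{\mathbb{R}}$ and the radical step $n(x)^2\in\mathfrak{K}\Rightarrow n(x)\in\mathfrak{K}$; once these are in hand the argument is essentially a transfer of the essentiality property along the norm map $n$.
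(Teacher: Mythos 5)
Your argument is sound, and since the paper omits its own proof (deferring to \cite{CFJ}), there is nothing textual to compare it against; the structure you use is the natural transfer along the norm map and is surely what the referenced proof does as well. Two small remarks. First, your radical step in the $(\Rightarrow)$ direction is more than you need: once you know $x\in\mathfrak J:=\overline{\mathbb H}(\mathfrak K)$ has all four components in $\mathfrak K$, and you already have $n(\mathfrak J)=\mathfrak J\cap\overline{\mathbb R}$, you can just note that $n(x)\in n(\mathfrak J)=\mathfrak J\cap\overline{\mathbb R}=\mathfrak K$ directly, since $\overline{\mathbb H}$ is a free $\overline{\mathbb R}$-module on $\{1,i,j,k\}$ and therefore $\overline{\mathbb H}(\mathfrak K)\cap\overline{\mathbb R}=\mathfrak K$; no appeal to Proposition \ref{colom-30} is required. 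Second, since $\mathcal D$ is usually defined via one-sided ideals, it is worth saying explicitly that Theorem \ref{colom-16} (duo) lets you restrict to two-sided ideals throughout. Finally, your preliminary identity $n(\mathfrak I)=\mathfrak I\cap\overline{\mathbb R}$, together with Lemma \ref{ideal-8} and Proposition \ref{colom-30} (every ideal is radical, hence semi-prime), actually shows that $\mathfrak I\mapsto n(\mathfrak I)$ is an inclusion-preserving bijection between ideals of $\overline{\mathbb H}$ and ideals of $\overline{\mathbb R}$ with inverse $\mathfrak K\mapsto\overline{\mathbb H}(\mathfrak K)$; essentiality being a lattice-theoretic property, the lemma then follows in one line, which is a slightly cleaner packaging of the same idea.
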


\begin{lema}\label{ideal-8}
Let $\mathfrak{J}\lhd\overline{\mathbb{H}}$ be an ideal. Then
$\mathfrak{I}\subset\overline{\mathbb{H}}(n(\mathfrak{I}))$. Moreover,
if $\mathfrak{I}$ is semi-prime, then $\mathfrak{I}=\overline{\mathbb{H}}(n(\mathfrak{I}))$.
\end{lema}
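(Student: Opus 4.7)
The plan is to split the statement into its two inclusions and treat them separately, exploiting the polar decomposition (\ref{polar}) from Section \ref{sec-3} and Proposition \ref{colom-13} (the convexity of ideals in $\overline{\mathbb{H}}$). In both directions the key is that $n(x)$ is central in $\overline{\mathbb{H}}$ (it lies in $\overline{\mathbb{R}}$) and that the polar factor $\Theta_{\hat{x}}$ is a unit.

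For the first inclusion $\mathfrak{I}\subseteq\overline{\mathbb{H}}(n(\mathfrak{I}))$, I would take $x\in\mathfrak{I}$ and use the polar form $x=\Theta_{\hat{x}}\,n(x)$ from (\ref{polar}). Writing $\Theta_{\hat{x}}=a_0+a_1i+a_2j+a_3k$ with $a_m\in\overline{\mathbb{R}}$, one obtains $x=(a_0n(x))+(a_1n(x))i+(a_2n(x))j+(a_3n(x))k$, and since $n(x)\in n(\mathfrak{I})$ and $n(\mathfrak{I})$ is an ideal of $\overline{\mathbb{R}}$, every coefficient $a_mn(x)$ lies in $n(\mathfrak{I})$. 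By the description $\overline{\mathbb{H}}(n(\mathfrak{I}))=n(\mathfrak{I})+n(\mathfrak{I})i+n(\mathfrak{I})j+n(\mathfrak{I})k$ (as used in Theorem \ref{ideal-1}), this places $x$ in $\overline{\mathbb{H}}(n(\mathfrak{I}))$.

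For the reverse inclusion, assume $\mathfrak{I}$ is semi-prime. It suffices to show $n(x)\in\mathfrak{I}$ for every $x\in\mathfrak{I}$, because an arbitrary $z\in\overline{\mathbb{H}}(n(\mathfrak{I}))$ has components $z_m$ which are $\overline{\mathbb{R}}$-linear combinations of elements $n(x_j)$ with $x_j\in\mathfrak{I}$, so once the components lie in $\mathfrak{I}$ we conclude $z\in\mathfrak{I}$. For $x\in\mathfrak{I}$ the identity $n(x)^2=x\bar{x}$ gives $n(x)^2\in\mathfrak{I}$, and centrality of $n(x)$ yields $n(x)\,\overline{\mathbb{H}}\,n(x)=n(x)^2\,\overline{\mathbb{H}}\subseteq\mathfrak{I}$; semi-primeness then forces $n(x)\in\mathfrak{I}$. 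The main obstacle is precisely this step: passing from $n(x)^2\in\mathfrak{I}$ to $n(x)\in\mathfrak{I}$ genuinely requires the semi-prime hypothesis, because without it one only obtains $n(\mathfrak{I})^2\subseteq\mathfrak{I}$. I would also observe that, in view of Proposition \ref{colom-30}, every ideal of $\overline{\mathbb{H}}$ is radical and hence semi-prime, so the equality $\mathfrak{I}=\overline{\mathbb{H}}(n(\mathfrak{I}))$ in fact holds for every ideal.
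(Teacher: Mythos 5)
Your proof is correct; note that the paper itself omits a proof for this lemma, merely pointing to the analogue in \cite{CFJ}, so there is no ``paper proof'' to compare against line by line. Both inclusions you argue are sound: the forward one via the polar factorization $x=\Theta_{\hat{x}}\,n(x)$ and the coordinatewise description of $\overline{\mathbb{H}}(n(\mathfrak{I}))$, and the reverse one via $n(x)^2=x\bar{x}\in\mathfrak{I}$, centrality of $n(x)$, and the element\nobreakdash-wise characterization of semiprimeness ($a\,\overline{\mathbb{H}}\,a\subseteq\mathfrak{I}\Rightarrow a\in\mathfrak{I}$).

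One remark of substance: your claim that the passage from $n(x)^2\in\mathfrak{I}$ to $n(x)\in\mathfrak{I}$ ``genuinely requires the semi-prime hypothesis'' is not accurate in this particular ring. Proposition~\ref{colom-13}(a) already asserts $x\in\mathfrak{I}\Leftrightarrow n(x)\in\mathfrak{I}$ for \emph{every} ideal of $\overline{\mathbb{H}}$, so the reverse inclusion can be obtained in one line without ever invoking semiprimeness: for $x\in\mathfrak{I}$, Proposition~\ref{colom-13}(a) gives $n(x)\in\mathfrak{I}$, and the rest of your reduction ($z_m$ an $\overline{\mathbb{R}}$-combination of $n(x_j)$'s, each in $\mathfrak{I}$) carries through unchanged. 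You do eventually arrive at the same conclusion when you cite Proposition~\ref{colom-30} to note that all ideals of $\overline{\mathbb{H}}$ are radical, hence semiprime, so the equality holds unconditionally; but that observation sits somewhat awkwardly next to the earlier claim that the hypothesis is indispensable. A cleaner presentation would use Proposition~\ref{colom-13}(a) directly and then remark that the semiprime hypothesis in the statement is, in $\overline{\mathbb{H}}$, automatically satisfied. Your detour through the semiprime characterization is nonetheless a legitimate, self-contained route and would be the right move if Proposition~\ref{colom-13} were not available (as, apparently, it was not in full in \cite{CFJ}, which is why the hypothesis appears there).
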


Proposition \ref{ideal-9} which extends the result of
(\cite{CFJ}, Proposition 4.10) has its prove as 
that appears in \cite{CFJ} and therefore will 
be omitted. 

\begin{prop}\label{ideal-9}
Let $\mathfrak{I}\lhd\overline{\mathbb{H}}$ be an ideal. The
following conditions are equivalent:
\begin{enumerate}
\item[$a)$] $\mathfrak{I}\notin\mathcal{D}(\overline{\mathbb{H}})$;
\item[$b)$] There exists an idempotent $e\in\overline{\mathbb{R}}$, 
such that $\mathfrak{I}\subseteq\overline{\mathbb{H}}e$.
\end{enumerate}
\end{prop}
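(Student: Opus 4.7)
The plan is to reduce the statement to the already-established commutative case in $\overline{\mathbb{R}}$ (Lemma \ref{ideal-5}) via the norm-to-ideal correspondence (Lemma \ref{ideal-7}), and to use the centrality of real idempotents in $\overline{\mathbb{H}}$ together with the convexity of ideals (Proposition \ref{colom-13}) to transfer statements back and forth between $\overline{\mathbb{R}}$ and $\overline{\mathbb{H}}$.

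For the easy direction $b)\Rightarrow a)$: Suppose $\mathfrak{I}\subseteq\overline{\mathbb{H}}e$ for some idempotent $e\in\overline{\mathbb{R}}$; necessarily $e\neq 1$, otherwise the conclusion is vacuous (and there is nothing to prove if $\mathfrak{I}=0$). Since $e\in\overline{\mathbb{R}}$ lies in the center of $\overline{\mathbb{H}}$, the principal ideal $\overline{\mathbb{H}}(1-e)$ is a nonzero two-sided ideal of $\overline{\mathbb{H}}$. For every $x\in\mathfrak{I}$, writing $x=ye$ gives $x(1-e)=ye(1-e)=0$, so $1-e\in r_{\overline{\mathbb{H}}}(\mathfrak{I})$. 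Equivalently, $\mathfrak{I}\cap\overline{\mathbb{H}}(1-e)\subseteq\overline{\mathbb{H}}e\cap\overline{\mathbb{H}}(1-e)=0$, and thus $\mathfrak{I}$ is not essential.

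For the harder direction $a)\Rightarrow b)$: Assume $\mathfrak{I}\notin\mathcal{D}(\overline{\mathbb{H}})$. By Lemma \ref{ideal-7}, $n(\mathfrak{I})\notin\mathcal{D}(\overline{\mathbb{R}})$. Applying Lemma \ref{ideal-5} to the commutative ring $\overline{\mathbb{R}}$, there exists an idempotent $e\in\mathcal{B}(\overline{\mathbb{R}})$ with $n(\mathfrak{I})\subseteq\overline{\mathbb{R}}e$. Now I would observe that $\overline{\mathbb{H}}e$ is itself an ideal of $\overline{\mathbb{H}}$ (by centrality of $e$), and that for any $x\in\mathfrak{I}$ one has $n(x)\in n(\mathfrak{I})\subseteq\overline{\mathbb{R}}e\subseteq\overline{\mathbb{H}}e$. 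At this point Proposition \ref{colom-13}$(a)$ applied to the ideal $\overline{\mathbb{H}}e$ yields $x\in\overline{\mathbb{H}}e$ directly, so $\mathfrak{I}\subseteq\overline{\mathbb{H}}e$ as desired.

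The only delicate point is to ensure the transition from $n(\mathfrak{I})\subseteq\overline{\mathbb{R}}e$ to $\mathfrak{I}\subseteq\overline{\mathbb{H}}e$ is airtight. Proposition \ref{colom-13}$(a)$ dispatches this in one line, but if one prefers an annihilator argument, the same conclusion follows by noting that $n(x)(1-e)=0$ gives $n(x(1-e))=(1-e)\,n(x)=0$, and since $\overline{\mathbb{R}}$ is reduced and ordered the norm on $\overline{\mathbb{H}}$ is faithful, forcing $x(1-e)=0$, i.e.\ $x=xe\in\overline{\mathbb{H}}e$. Either way, the crux is the centrality of idempotents of $\overline{\mathbb{R}}$ inside $\overline{\mathbb{H}}$, which makes the reduction to the scalar case work without obstruction.
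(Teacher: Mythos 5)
Your proof is correct and reconstructs exactly the argument the paper scaffolds for this result (and defers to \cite{CFJ} for): reduce to $\overline{\mathbb{R}}$ via Lemma \ref{ideal-7}, invoke Lemma \ref{ideal-5} for the commutative case, and lift back to $\overline{\mathbb{H}}$ through the centrality of real idempotents together with Proposition \ref{colom-13}$(a)$. The one convention worth making explicit is that the idempotent $e$ in item $b)$ must be understood as proper ($e\ne 1$) for $b)\Rightarrow a)$ to have content, which you correctly note and which is exactly what Lemma \ref{ideal-5} produces in the other direction.
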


Here we let $\mathbb{K}$ stand for $\mathbb{R}$ or $\mathbb{C}$. It
follows from Proposition \ref{ideal-9} that the singular ideals
$Z_r(\overline{\mathbb{K}})=Z_r(\overline{\mathbb{H}})=\{0\}$ and
hence, by Theorem 2.1.15 of \cite{BMW}, we have that
$Q_{\max}(\overline{\mathbb{K}})$ and
$Q_{mr}(\overline{\mathbb{H}})$, the maximal right ring of
quotients of $\overline{\mathbb{K}}$ are Von Neumann
regular. Moreover, it is not difficult to see that  
$Q_{mr}(\overline{\mathbb{K}})$ is contained $Q_{mr}(\overline{\mathbb{H}})$. 

\vspace{1.5cm}
{\bf{Aknowledgement}}: The authors are grateful to UAMat/CCT/UFCG and CCEN/UFERSA
where this work was realized. The second author thanks PNPD/CAPES for
their support.

\bibliographystyle{plain}
\bibliography{biblio_gf}

\Addresses

\end{document}